\newtheorem{thm}{Theorem}[section]
\newtheorem{cor}[thm]{Corollary}
\newtheorem{lem}[thm]{Lemma}
\newtheorem{prop}[thm]{Proposition}
\newtheorem{conj}{Conjecture}[section]
\theoremstyle{definition}
\newtheorem{defin}[thm]{Definition}
\newtheorem{rem}[thm]{Remark}
\newcommand{\eps}{\varepsilon}
\newcommand{\deltabf}{\text{\boldmath $\delta$}}
\newcommand{\B}{\mathbb{B}}
\newcommand{\C}{\mathbb{C}}
\newcommand{\N}{\mathbb{N}}
\newcommand{\R}{\mathbb{R}}
\newcommand{\T}{\mathbb{T}}
\newcommand{\W}{\mathbb{W}}
\newcommand{\Z}{\mathbb{Z}}
\newcommand{\AAA}{\mathcal{A}}
\newcommand{\BB}{\mathcal{B}}
\newcommand{\DD}{\mathcal{D}}
\newcommand{\HH}{\mathcal{H}}
\newcommand{\LL}{\mathcal{L}}
\newcommand{\MM}{\mathcal{M}}
\newcommand{\RR}{\mathcal{R}}
\newcommand{\RHH}{\mathcal{RH}}
\newcommand{\TT}{\mathcal{T}}
\newcommand{\XX}{\mathcal{X}}
\newcommand{\Cl}{\text{Cl}}
\newcommand{\Spec}{\text{Spec}}
\newcommand{\ignore}[1]{}
\newcommand{\nobibentry}[1]{{\let\nocite\ignore\bibentry{#1}}}
\author[1]{Pau Rabassa}
\author[2]{Angel Jorba}
\author[2]{Joan Carles Tatjer}
\affil[1] {\mbox{Johann Bernoulli Institute for Mathematics and Computer Science,}
\centerline{ \mbox{University of Groningen, Groningen, The Netherlands}}
\newline
\mbox{E-mail: {\tt paurabassa@gmail.com}}
\vspace{2mm}}
\affil[2]{\mbox{Departament of Matem\`atica Aplicada i An\`alisi,}
\mbox{Universitat de Barcelona, Barcelona, Spain}
\mbox{E-mails: {\tt angel@maia.ub.edu}, {\tt jcarles@maia.ub.es}}}
\date{}
\title{
Towards a renormalization theory for quasi-periodically forced 
          one dimensional maps {I}. {E}xistence of reducibility loss 
          bifurcations\thanks{This work has been supported by the 
          MEC grant MTM2009-09723 and the CIRIT grant 2009 SGR 67.
          P.R. has been partially supported by the PREDEX project, funded
          by the Complexity-NET: {\tt www.complexitynet.eu}.}
}
\begin{document}
\maketitle
\begin{abstract}
We propose an extension of the one dimensional (doubling) 
renormalization operator to the case of maps on the cylinder. 
The kind of maps considered are commonly referred as quasi-periodic 
forced one dimensional maps. 
We prove that the 
fixed point of the one dimensional renormalization operator 
extends to a fixed point of the quasi-periodic forced 
renormalization operator. We also prove that 
the operator is differentiable around the fixed point and we 
study its derivative. Then we consider a two parametric family 
of quasi-periodically forced maps which is a unimodal one dimensional 
map with a full cascade of period doubling bifurcations 
plus a quasi-periodic perturbation. For one 
dimensional maps it is well known that between one period 
doubling and the next one there exists a parameter value 
where the $2^n$-periodic orbit is superatracting. 
Under appropriate hypotheses, we prove that the 
two parameter family has two curves of reducibility loss bifurcation
around these points. 
\end{abstract}

\tableofcontents


\section{Introduction}
\label{chapter renormalization for q.p. forced maps}

This is the first of a series of papers (together with \cite{JRT11b,JRT11c})
proposing an extension of the one dimensional renormalization 
theory for the case of quasi-periodic forced maps. Each of these 
papers is self contained, but highly interrelated with the others. A more 
detailed exposition can be found in \cite{Rab10}.  
In this paper we give a concrete definition of the operator to the case of 
quasi-periodic maps and we use it to prove the existence of 
reducibility loss bifurcations when the coupling parameter goes to 
zero, like the ones observed in \cite{JRT11p, JRT11p2} 
for the Forced Logistic map. 
In \cite{JRT11b} we will use the results developed here to study the 
asymptotic behavior of these bifurcations when the period of the attracting set 
goes to infinity. Our quasi-periodic extension of the renormalization 
operator is not complete in the sense that 
several conjectures must be assumed. In \cite{JRT11c} we include 
the numerical evidence which support these conjectures and we show 
that the theoretical results agree with the behavior observed numerically.

The classic one dimensional renormalization theory was motivated to explain
the cascades of period doubling bifurcations. 
The paradigmatic example in the case of unimodal maps is the
Logistic Map, 
but the properties of renormalization and universality
are also observable in a wider class of maps.
Concretely, given a typical one parametric family for unimodal maps
$\{f_\alpha\}_{\alpha\in I}$  one  observes
numerically that there exists a sequence of parameter
values $\{d_n\}_{n\in \N} \subset I$ such that
the attracting periodic orbit of the map undergoes a period doubling 
bifurcation. Between one period doubling and the next one there 
exists also a parameter value $s_n$, for which the critical point 
of $f_{s_n}$ is a periodic orbit with period $2^n$. One can 
also observe that 
\begin{equation}
\label{universal limit sumicon}
\lim_{n\rightarrow \infty} \frac{d_n - d_{n-1}} {d_{n+1} - d_{n}} = 
\lim_{n\rightarrow \infty} \frac{s_n - s_{n-1}} {s_{n+1} - s_{n}} =
\deltabf  = \texttt{ 4.66920...}. 
\end{equation}

Moreover, the constant $\deltabf$ is universal,
in the sense that for any family  of unimodal maps with a quadratic
turning point having a cascade of period doubling bifurcations,
one obtains the same ratio $\deltabf$. For technical 
reason the discussion is typically focussed 
around the values $s_n$.

The renormalization theory for unimodal one dimensional maps 
was originated by the seminal works of Feigenbaum 
(\cite{Fei78,Fei79}) and Collet and Tresser (\cite{CT78}) who independently 
proposed the renormalization operator to explain the universal
behavior observed in the cascades of bifurcations of
one dimensional maps, see \cite{MvS93} for a review. 
Let us do a quick summary of the theory. The (doubling) renormalization
operator, which is denoted by $\RR$, is defined in the space of unimodal maps 
as the self composition of the map composed with a change of 
scale (see subsection \ref{subsection Set up 1-d renor} for more 
details). There are some basic assumptions on the dynamics of the operator 
$\RR$ (known as the Feigenbaum conjectures) which give a suitable 
explanation to the universality described before. 
The first of these conjectures is that the operator has a fixed 
point $\Phi$ and it is differentiable in a neighborhood of $\Phi.$ 
The second conjecture is that the spectrum of $D\RR(\Phi)$ has a  
a unique real eigenvalue $\deltabf$ bigger than one, and the rest of
eigenvalues are strictly smaller than one. Then one has that 
the unstable manifold $W^u(\Phi,\RR)$ has dimension one and the stable manifold 
$W^s(\Phi,\RR)$ has codimension one.

On the other hand, one has that the values $\alpha=s_n$
where the critical point
of a map $f_{s_n}$ has period $2^n$ correspond to the
parameter values where the family $\{f_\alpha\}_{\alpha\in I\subset\R}$
intersects certain codimension one manifolds $\Sigma_n$. 
Moreover one has that $\RR(\Sigma_n) \subset \Sigma_{n-1}$ .
The third conjecture claims that these manifolds intersect 
transversally $W^u(\Phi,\RR)$. Then one has that they accumulate to 
$W^s(\Phi,\RR)$ with ratio $\deltabf$ in a neighborhood $U$ of $\Phi$. 
Then, for any one dimensional family $\{f_\alpha\}_{\alpha\in I}$ intersecting
$W^s(\Phi,\RR)\cap U$ transversally, the family has
a sequence $\{s_n\}_{n\in \N}$ of parameters where the critical
 point of the map $f_{s_n}$ is periodic with
period $2^n$. Hence, applying the $\lambda$-lemma, this parameter values  
satisfy the asymptotic behavior
given by the equation (\ref{universal limit sumicon}), with $\deltabf$ the 
unstable eigenvalue of $D\RR(\Phi)$.

The first proofs of the Feigenbaum conjectures were done  with 
computer assistance (\cite{Lan82,EW87}). Later on completely conceptual proofs 
appeared (\cite{Sul92,Lyu99}), all of them for the case of analytic maps. 
For studies of the operator in the $C^r$ context see \cite{FEM06} and 
\cite{CMMT09}. Our extension to the quasi-periodic case does 
not cover all the theory exposed in the cited works.

\begin{figure}[t]
\centering
\resizebox{11cm}{!}{
\input{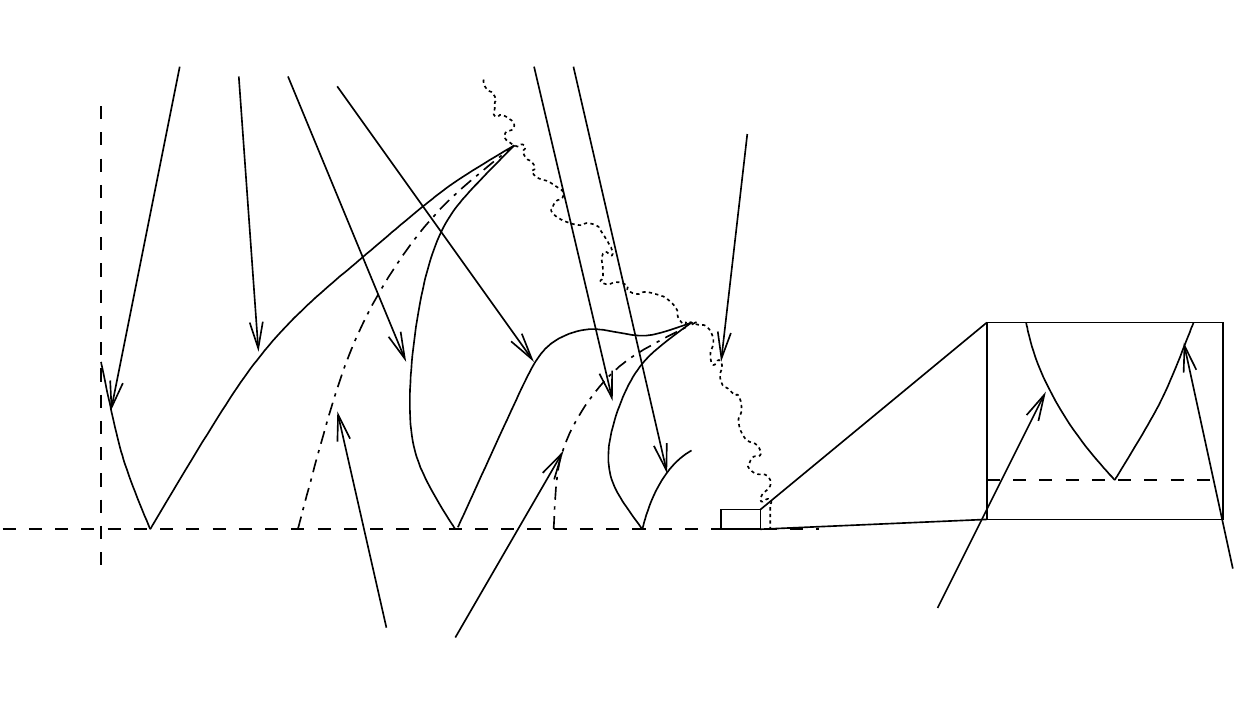_t}
\phantom{aaaaa}
}
\caption{
Schematic representation of the bifurcations diagram 
of the Forced Logistic Map, see \cite{JRT11p} for the numeric 
computation of this diagram.} 
\label{Esquema BifPar}
\end{figure}

The paradigmatic example in this paper is the Forced Logistic Map (FLM for 
short). Nevertheless the obtained results are applicable to a wider class 
of maps. The FLM is a map in the cylinder $\T\times \R$
defined as
\begin{equation}
\label{FLM}
\left.
\begin{array}{rclcl}
\bar{\theta} & = & r_\omega(\theta) & = & \theta + \omega  ,\\
\bar{x} & = & f_{\alpha,\eps} (\theta,x) & = & \alpha x(1-x)(1+ \eps \cos(2\pi \theta)) ,
\end{array}
\right\}
\end{equation}
where $(\alpha,\eps)$ are parameters and $\omega$ a fixed Diophantine
number. 
The dynamics on the periodic component is a rigid rotation
and the dynamics on the real component is the Logistic Map
plus a perturbation depending on the periodic one. Sometimes the 
FLM is also defined with $f_{\alpha,\eps} (\theta,x) = 
\alpha x(1-x) +  \eps \cos(2\pi \theta)$. The results in this 
paper applies to both cases. 

The FLM map appears in the literature in different
contexts, usually related with the destruction of invariant curves, 
see \cite{JRT11p} and references therein. 
Concretely, we are interested on the truncation of the
period doubling cascade. As discussed above, the Logistic Map 
exhibits an infinite cascade of period doubling bifurcations
which leads to chaotic behavior. For zero coupling ($\eps=0$), 
these periodic orbits become invariant curves of the FLM (provided the rotation 
number $\omega$ is irrational). But when the coupling 
parameter is different from zero, the number of period doubling
bifurcations of the invariant curves is finite.

We studied numerically this phenomenon in \cite{JRT11p}. 
Concretely,  we computed some bifurcation diagrams in terms
of the dynamics of the attracting set, taking 
into account different properties,  as the Lyapunov exponent  
and, in the case of having a periodic invariant curve, its 
period and its reducibility. In the case of analytic maps in 
the cylinder, the reducibility 
loss of an invariant curve can be characterized as a bifurcation 
(see definition 2.3 in \cite{JRT11p}). But the reducibility loss 
is not a bifurcation in the classical sense because there are not 
visible changes in the phase space, only the spectral properties 
of the transfer operator associated to the continuation of that 
curve changes (see \cite{JT08}). Despite of this, we will consider 
it as a bifurcation for the rest of this paper. The numerical 
computations in the cited work reveal that the parameter 
values for which the invariant curve doubles its period are
contained in regions of the parameter space where the invariant 
curve is reducible. As before, let $s_n$ be the parameter values where
the critical point of the uncoupled family is periodic with
period $2^n$. The numerical computations also revealed that from every 
parameter  value $(\alpha,\eps) = (s_n, 0)$ 
two curves are  born. These curves correspond to a
reducibility-loss bifurcation of the $2^n$-periodic invariant curve.
The scenario is sketched in figure \ref{Esquema BifPar}. 

Assume that these two curves can be locally expressed as 
$(\alpha_n^+(\eps),\eps)$  and $(\alpha_n^-(\eps), \eps)$
with $\alpha_n^+(0)= \alpha_n^-(0)=s_n$. 
In theorem \ref{thm existence of non-reducibility direction} we 
prove that these curves really exist for suitable 
families of maps. Moreover, we give the values of 
$\frac{d}{d \eps} \alpha ^{+}_n(0)$ and $\frac{d}{d \eps} \alpha ^{-}_n(0)$ 
in terms of the iterates of the renormalization operator. 
To achieve this result we need to assume that the quasi-periodic 
renormalization operator is injective. This assumption will be 
called Conjecture {\bf \ref{conjecture H2}} which is 
stated in section \ref{section boundaris of reducibility}. 
This conjecture will be supported numerically in \cite{JRT11c}. 
	
The paper is structured as follows. In section \ref{section definition 
and basic properties} we propose a definition for the q.p. renormalization 
operator and we study the operator as a map on the Banach space of 
q.p. forced unimodal maps. Among other results, we prove that the 
fixed point of the one dimensional renormalization operator 
extends to the quasi-periodic one and we compute and 
study its derivative. In section \ref{chapter application} we consider 
certain codimension one manifolds, which correspond to 
the bifurcation manifold associated to the reducibility loss of the 
$2^n$ periodic invariant curve of the system. We 
relate these manifolds for different values of $n$ by 
means of the renormalization operator. Then we consider 
a generic two parametric family such that it becomes 
a full family of renormalizable one dimensional maps when 
one of its parameters is equal to zero. We use the q.p. renormalization 
theory to prove the existence of reducibility loss bifurcations 
for the family. We also include an appendix where we 
analyze the minimum function as a functional operator from the space of functions 
$f:\T\rightarrow \R$  to $\R$, which is necessary for our discussion.

\section{Definition of the operator and basic properties} 
\label{section definition and basic properties}


Consider a q. p. forced map as follows,
\begin{equation}
\label{q.p. forced system interval}
\begin{array}{rccc}
F:& \T\times I &\rightarrow & \T \times I \\
  & \left( \begin{array}{c} \theta \\ x \end{array}\right)
  & \mapsto
  & \left( \begin{array}{c} \theta + \omega \\ f(\theta,x) \end{array}\right),
\end{array} 
\end{equation}
where $I=[-1,1]$ and $f\in C^r(\T\times I ,I)$. 
To define the renormalization operator we only require $r \geq 1$, 
but in this work we focus on the simplest case of analytic functions. 

Let us remark that, in this section,  no additional assumptions will be done on 
$\omega$. 
The aim of this section is to define the quasi-periodic renormalization 
operator. As long as the dynamics of the map $F$ is not considered, it 
is not necessary any additional requirement on $\omega$. In section 
\ref{chapter application} we will require $\omega$ to be Diophantine. But for 
this section it is advantageous to define 
the operator for any $\omega\in \T$, since then the operator will depend 
continuously on $\omega$. 

The definition of the renormalization operator will be done from a 
perturbative point of view. In other words, we will consider a 
map $F$ like (\ref{q.p. forced system interval})
 such that $f(\theta,x)= f_0(x) + h(\theta,x)$, with $f_0$ a
unimodal map on $\DD(\RR)$ the domain of the renormalization operator. In 
this section we will see that if $h$ small enough 
(in $\|\cdot\|_\infty$ norm), then we can define a ``renormalization'' of $f$. 

In section \ref{subsection Set up 1-d renor} we introduce the setup 
of the one dimensional renormalization operator that we 
consider in this paper.  
In section \ref{section definition of q.p. renormalization}
we define the 
renormalization operator for q.p. forced maps and we will check that the
definition is consistent. 
In section \ref{subsection study of TT} the basic properties of the operator 
are studied. Concretely, we  check that a fixed point
of the one dimension renormalization operator extends to a fixed point 
of the q.p. one and  we will also check that the operator is differentiable
in a neighborhood of the fixed point. 
In subsection \ref{section The Fourier expansion of DT}   
we study the derivative of the operator with respect 
to the Fourier expansion of the function to which the operator is applied. 
%


\subsection{Setup of the one dimensional renormalization operator. }
\label{subsection Set up 1-d renor}  

We introduce here the precise definition of the 
one dimensional renormalization operator. 
The approach chosen here is due to its
simplicity, which makes easier to adapt to the quasi 
periodic case. Concretely we follow \cite{Lan82}, but we do 
a slight modification on the domain of the operator 
for technical reasons. For a set up in a much
more general context see \cite{Sul92, MvS93, Lyu99}
and for more recent works on renormalization of one-dimensional maps
see \cite{FEM06} and \cite{CMMT09}.

For the understanding of this section it is advisable to have 
some familiarity with the definition of the (doubling) renormalization 
operator given in \cite{Lan82}. Note that the definition given there 
is for even maps defined on the interval $[-1,1]$, such that the 
turning point is $0$ and it is mapped to $1$.  Given a skew map 
$F$ like  (\ref{q.p. forced system interval}), we
want to define the renormalization of the map in a similar way
to the one-dimensional case. That is, to give some 
generic conditions on $F$ in such a way that 
it has a two periodic invariant subset, and such that $F^2$ restricted
to this subset is affinely conjugate to a map in the same class of 
functions of $F$. Note that the $\theta$-component of $F$, when 
$\omega$ is irrational,  does not allow the map to have invariant 
subsets in this component. Moreover we want 
the renormalization of $F$ to have a rigid rotation in the 
periodic component. Then the affine conjugacy 
should be of the form $A(\theta, x)= (\theta, a x)$, with $a$ a real number. 
Note that the skew map $A^{-1}\circ F\circ F \circ A$ has 
rotation number equal to $2\omega$ and is defined by 
the function $\frac{1}{a} f(\theta +\omega, f(\theta, a x))$, 



Suppose that we have $g$ 
a renormalizable one dimensional map 
and $h \in C^{r}(\T\times I, I)$ such that its supremum norm $\|h \|$ 
is small. Then we would like to consider a map $F$ like  
(\ref{q.p. forced system interval}) 
defined by  the function
\[f(\theta,x) = g(x) + h(\theta,x).\]

Note that the definition of the renormalization operator 
given in \cite{Lan82} is for even maps defined on the 
interval $[-1,1]$, such that the turning point is $0$ and it is mapped to $1$.
If we want $F:\T \times I \rightarrow \T \times I$ to be well defined 
we should have $f(\T\times I) \subset I$. 
Although we allow $\|h\|$ to be small,
we should require  $h(\theta,0)$ to be negative for any $\theta\in\T$.
This would make the construction quite artificial and not 
applicable to the general q.p. forced maps like the FLM. 
A solution  to this problem
is to replace the interval $I=[-1,1]$ by a wider one 
$I_\delta=[-1-\delta, 1+\delta]$, but then 
we have to check that the one dimensional renormalization operator
can be extended to this new domain.

\begin{figure}[t]
\centering
\resizebox{7.5cm}{!}{\input{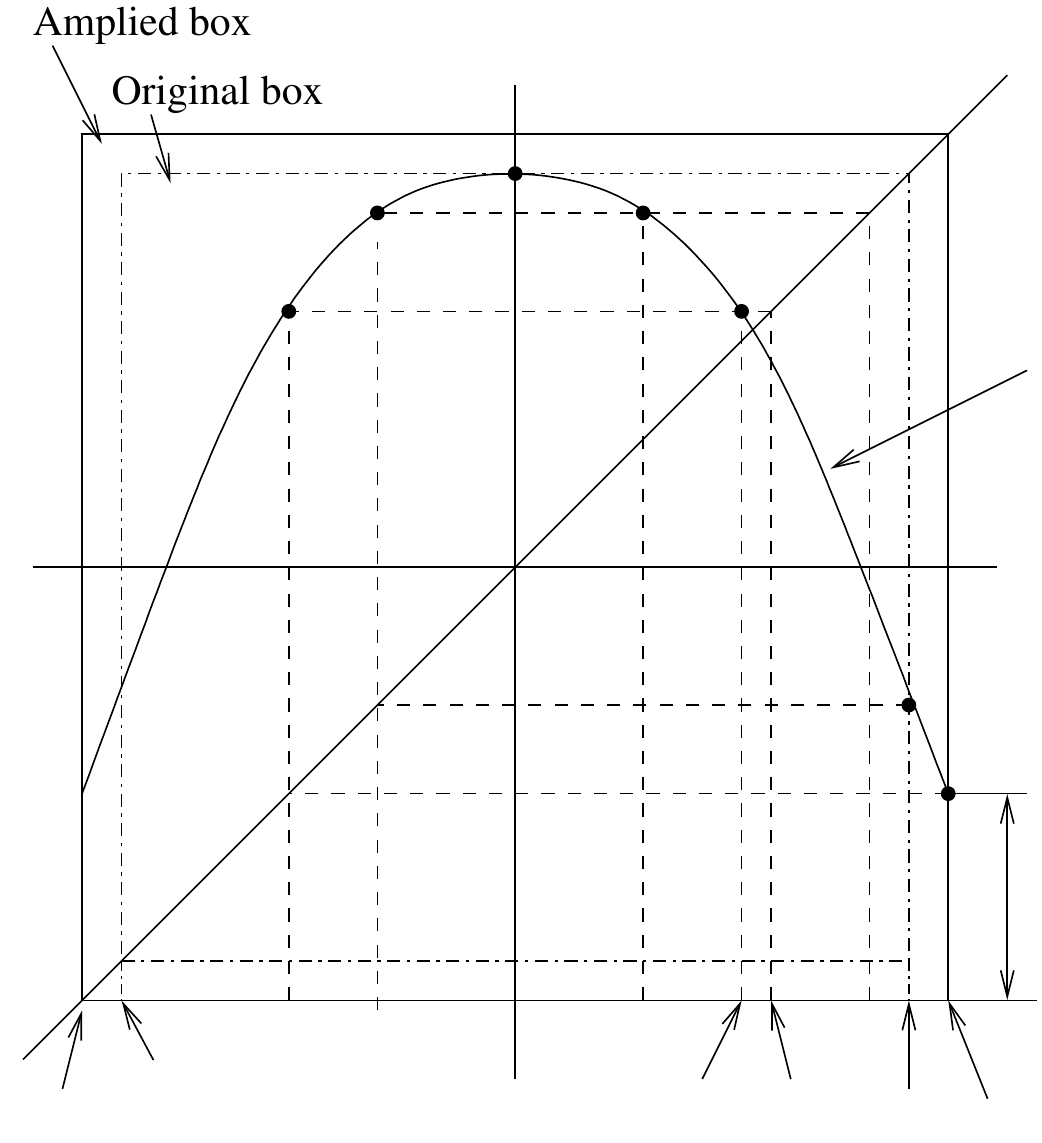_t}}
\caption{Given a function $\psi \in \MM_\delta$, in the picture 
we show the geometrical meaning of the new constants $a'$, $b'$, $\delta$ and 
$\gamma$.}
\label{MapLanf modified}
\end{figure}

Given a small value $\delta$, let  $\MM_\delta$ denote the space 
of continuously differentiable even maps $\psi$ of the interval 
$I_\delta=[-1-\delta,1+\delta]$ into itself such that
\begin{enumerate}
\item $\psi(0)=1$,
\item $x \psi'(x) <0$ for $x\neq 0$.
\end{enumerate}

Set $a=\psi(1)$, $a'= (1+\delta)a$ and $b'=\psi(a')$. In 
figure \ref{MapLanf modified} one can see an example of a function in 
$\MM_\delta$ where these values are shown. Now, 
we define $\DD(\RR_\delta)$ as the set of $\psi \in \MM_\delta$ 
such that
\begin{enumerate}
\item  $a<0$
\item  $1> b'>-a'$,
\item  $\psi(b') <- a'$.
\end{enumerate}

\begin{rem}
\label{remmark open domain 1d} 
Note that the values $a$, $a'$ and $b'$ can be seen as continuous 
functions from $\MM_\delta$ to $\R$, and therefore the set $\DD(\RR_\delta)$ 
is open in $\MM_\delta$ (with the $C^k$ topology). 
\end{rem}

We define the renormalization operator, $\RR_\delta : 
\DD(\RR_\delta) \rightarrow \MM_\delta$ as 
\begin{equation}
\label{renormalization operator lanford}
\RR_\delta(\psi) (x) =  \frac{1}{a} \psi \circ \psi (a x).
\end{equation}
where $a=\psi(1)$.

\begin{prop}
\label{proposition RRdelta well defined} 
The operator $\RR_\delta$ is well defined, in the sense that 
$\RR_\delta(\psi)$ belongs to $\MM_\delta$ for any $\psi \in \DD(\RR_\delta)$ 
\end{prop}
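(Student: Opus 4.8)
The plan is to verify directly that $\RR_\delta(\psi)$ satisfies the two defining conditions of $\MM_\delta$, namely that it is a continuously differentiable even self-map of $I_\delta$ with $\RR_\delta(\psi)(0)=1$ and $x(\RR_\delta(\psi))'(x)<0$ for $x\neq 0$. Writing $\phi = \RR_\delta(\psi)$, where $\phi(x)=\tfrac1a\,\psi(\psi(ax))$ with $a=\psi(1)$, the smoothness and evenness are immediate: $\psi$ is $C^1$ and even, so $\phi$ is $C^1$, and $\phi(-x)=\tfrac1a\psi(\psi(-ax))=\tfrac1a\psi(\psi(ax))=\phi(x)$ since $\psi$ is even. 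Next, $\phi(0)=\tfrac1a\psi(\psi(0))=\tfrac1a\psi(1)=\tfrac1a\cdot a=1$, giving the normalization.

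For the monotonicity condition $x\phi'(x)<0$, compute $\phi'(x)=\psi'(\psi(ax))\,\psi'(ax)$. For $x>0$: since $a=\psi(1)<0$ (this is the first condition defining $\DD(\RR_\delta)$) we have $ax<0$, hence $\psi'(ax)>0$ by condition~2 of $\MM_\delta$ applied at the nonzero point $ax$; and one must check $\psi(ax)\neq 0$ so that the other factor is also signed — here the key point is that $\psi(ax)$ ranges over the image of $\psi$ on $[a',0)$ (for $x\in(0,1+\delta]$), which by the conditions $a<0$, $1>b'>-a'$ lies in a region where $\psi$ is nonzero, so $\psi'(\psi(ax))$ has a definite sign; tracking signs gives $\psi'(\psi(ax))<0$, whence $\phi'(x)<0$ and $x\phi'(x)<0$. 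The case $x<0$ follows from evenness.

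The remaining and genuinely substantive point is that $\phi$ maps $I_\delta$ into $I_\delta$, i.e. $|\phi(x)|\le 1+\delta$ for all $x\in I_\delta$; this is exactly where conditions 2 and 3 in the definition of $\DD(\RR_\delta)$ (namely $1>b'>-a'$ and $\psi(b')<-a'$) are used, and it is the main obstacle. The argument is a careful interval-chasing: for $x\in[0,1+\delta]$, the inner point $ax$ runs through $[a',0]$ where $a'=(1+\delta)a$; applying $\psi$ (which is decreasing on the negatives toward $0$, increasing... — one tracks using evenness and monotonicity) one finds $\psi([a',0])=[\text{something},1]$ with lower endpoint controlled by $b'=\psi(a')$, and then the outer application of $\psi$ sends this range into an interval whose endpoints are governed by $\psi(b')$ and $\psi(1)=a$; the inequalities $b'>-a'$ together with $\psi(b')<-a'$ are precisely what guarantees, after dividing by $a<0$, that the result stays within $[-1-\delta,1+\delta]$. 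I would organize this as a short lemma isolating the image $\psi([a',0])$ and then composing once more, checking the two extreme values; the constants $a',b',\gamma$ depicted in Figure~\ref{MapLanf modified} make the bookkeeping transparent. Once the self-map property is established, the proposition follows by combining it with the smoothness, evenness, and normalization already observed.
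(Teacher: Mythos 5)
Your proposal is correct and takes essentially the same route as the paper: verify the normalization $\Psi(0)=1$, analyze the sign of $\Psi'(x)=\psi'(\psi(ax))\,\psi'(ax)$ using $a<0$ and $1>b'>-a'$ to guarantee $\psi(ax)\in[b',1]\subset(0,1]$, and use $\psi(b')<-a'$ for the range condition. The only cosmetic difference is that the paper exploits the already-established monotonicity (evenness plus $x\Psi'(x)<0$) to reduce the self-map property to the single endpoint check $\Psi(1+\delta)=\frac{1}{a}\psi(b')>-1-\delta$, whereas you chase the image intervals $[a',0]\mapsto[b',1]\mapsto[a,\psi(b')]$ through the composition — the same computation in the end.
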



\begin{prop}
\label{proposition fixed points extend to RRdelta}
Any fixed point $\Phi\in \MM_0$ of $\RR_0$ extends to a fixed 
point of $\RR_\delta$, as long as $\Phi\in \DD(\RR_\delta)$. 
\end{prop}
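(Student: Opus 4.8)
The plan is to show that the fixed-point equation $\RR_\delta(\Phi)=\Phi$ is essentially the same equation as $\RR_0(\Phi)=\Phi$, restricted to the appropriate domains, so that the hypothesis $\RR_0(\Phi)=\Phi$ together with the assumption $\Phi\in\DD(\RR_\delta)$ immediately gives the claim. The only subtlety is that $\Phi$, as a fixed point of $\RR_0$, is a priori only defined (as a map into itself) on $I_0=[-1,1]$, whereas membership in $\MM_\delta$ requires a function defined on the larger interval $I_\delta=[-1-\delta,1+\delta]$. So the first step is to observe that the formula $\RR_0(\Phi)(x)=\frac1a\Phi\circ\Phi(ax)$ with $a=\Phi(1)$ already makes sense for $x$ slightly outside $[-1,1]$: since $|a|<1$ (this follows from $a=\Phi(1)$ and $x\psi'(x)<0$, which forces $\psi$ to decrease from $1$ at $x=0$, hence $\Phi(1)\in(-1,1)$, and in fact $a<0$ because $\Phi\in\DD(\RR_\delta)\subset\MM_\delta$ with condition $a<0$), the point $ax$ lies in $[-1,1]$ for all $x\in I_\delta$ provided $\delta$ is small enough, and then $\Phi(ax)\in[-1,1]$ so that $\Phi\circ\Phi(ax)$ is defined. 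Thus $\RR_0(\Phi)$ has a canonical extension to $I_\delta$ given by the very same algebraic expression.

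The second step is to identify this extension with $\RR_\delta(\Phi)$. By Proposition~\ref{proposition RRdelta well defined}, $\RR_\delta(\Phi)\in\MM_\delta$ whenever $\Phi\in\DD(\RR_\delta)$; and by definition $\RR_\delta(\Phi)(x)=\frac1a\Phi\circ\Phi(ax)$ with $a=\Phi(1)$ — the identical formula. Hence on $I_\delta$ we have $\RR_\delta(\Phi)=\widetilde{\RR_0(\Phi)}$, the extension constructed above. Restricting both sides to $I_0$ recovers $\RR_0(\Phi)$, which equals $\Phi$ by hypothesis. Therefore $\RR_\delta(\Phi)$ is a continuous (indeed $C^r$) function on $I_\delta$ whose restriction to $I_0$ is $\Phi$; what remains is to check that $\Phi$ itself, as an element of $\MM_\delta$, agrees with this extension on all of $I_\delta$, i.e. that $\Phi$ was the function we fed in. This is automatic: we are assuming from the outset that $\Phi\in\DD(\RR_\delta)$, so $\Phi$ is already given as a function on $I_\delta$, and the computation above shows $\RR_\delta(\Phi)$ and $\Phi$ coincide on $I_0$ and are both determined on $I_\delta\setminus I_0$ by the same formula $\frac1a\Phi\circ\Phi(ax)$ applied to the given $\Phi$ — so they coincide everywhere on $I_\delta$.

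The one place requiring genuine care — and the step I expect to be the main (minor) obstacle — is verifying that the composition $\frac1a\Phi\circ\Phi(ax)$ is well defined and lands in $I_\delta$ for \emph{all} $x\in I_\delta$, not merely in $I_0$; that is, re-running the proof of Proposition~\ref{proposition RRdelta well defined} and checking each inequality survives when $\delta>0$. Concretely one needs $a\cdot(1+\delta)\in I_0$ (so that $\Phi(ax)$ is defined for the extreme $x=1+\delta$), which holds because $|a|<1$ forces $|a|(1+\delta)<1+\delta$ but one actually needs $|a|(1+\delta)\le 1$, true for $\delta$ small since $|a|<1$ strictly; and then one needs the image $\frac1a\Phi\circ\Phi(ax)$ to stay within $[-1-\delta,1+\delta]$, which is exactly the content of the domain conditions $1>b'>-a'$ and $\psi(b')<-a'$ entering Proposition~\ref{proposition RRdelta well defined}, now invoked for $\psi=\Phi$. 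Since $\Phi\in\DD(\RR_\delta)$ is a standing hypothesis, all these conditions hold by assumption, and the proposition applies directly. No new estimate is needed beyond what Proposition~\ref{proposition RRdelta well defined} already provides; the proof is therefore essentially a bookkeeping argument showing that ``being a fixed point'' is a condition on the algebraic identity $\frac1a\psi\circ\psi(ax)=\psi(x)$ that does not see the size of the domain, only that the formula is legal there.
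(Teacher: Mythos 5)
Your first two steps are sound and are exactly the paper's mechanism: since $|a|<1$ (indeed $a<0$), the expression $\frac1a\Phi\circ\Phi(ax)$ makes sense for $x\in I_\delta$ once $|a|(1+\delta)\le 1$, so the invariance equation itself provides a canonical extension of the one--dimensional fixed point beyond $[-1,1]$; the paper extends $\Phi$ recursively by this equation and then checks the extension stays in the domain (e.g.\ $\Phi(1+\delta)>-1-\delta$ because $a=\Phi(1)>-1$). The gap is in your final identification. You treat $\Phi$ as an \emph{arbitrary} function on $I_\delta$ lying in $\DD(\RR_\delta)$ whose restriction to $I_0=[-1,1]$ is the fixed point of $\RR_0$, and you declare it ``automatic'' that this $\Phi$ agrees on $I_\delta\setminus I_0$ with the canonical extension because ``both are determined by the same formula''. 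That is circular: $\RR_\delta(\Phi)$ is given by the formula by definition, but the values of the given $\Phi$ on $I_\delta\setminus I_0$ are free data, and asserting that they equal $\frac1a\Phi\circ\Phi(ax)$ is precisely the fixed-point identity you are trying to establish there. Worse, since $|a|(1+\delta)<1$, the function $\RR_\delta(\Phi)$ on all of $I_\delta$ depends only on $\Phi|_{I_0}$; so if one perturbs the extension on $(1,1+\delta]$ (keeping evenness, monotonicity and the defining conditions of $\DD(\RR_\delta)$, which only constrain $\Phi$ at points of $I_0$ together with the range condition), one gets another element of $\DD(\RR_\delta)$ with the same restriction to $I_0$ which is certainly not fixed by $\RR_\delta$. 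Hence the coincidence on $I_\delta\setminus I_0$ is not automatic, and under your literal reading the claimed conclusion would even be false.

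The repair is to prove the proposition as the paper does: \emph{define} the extension by the invariance equation, $\Phi(x):=\frac1a\Phi(\Phi(ax))$ for $x\in I_\delta\setminus I_0$ (equivalently, for Lanford's analytic fixed point, take the analytic continuation, which satisfies the same identity by uniqueness of continuation). With this choice the identity $\RR_\delta(\Phi)=\Phi$ on $I_\delta$ holds by construction, exactly by the computation in your first two steps; the only remaining content --- which is what the hypothesis ``$\Phi\in\DD(\RR_\delta)$'' refers to --- is that this particular extension lies in $\MM_\delta$, respectively $\DD(\RR_\delta)$, which the paper verifies for $\delta$ small from $a=\Phi(1)>-1$, giving $\Phi(1+\delta)>-1-\delta$.
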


Let us remark that in the proof of the existence of a fixed 
point done in \cite{Lan82} it is claimed  that there exists 
a function $\Phi$, which is analytic on the 
domain $\left\{z \in \C | \thinspace |z^2-1|\leq \sqrt{8} \right\}$,  
and such that its restriction to $I=[-1,1]$ is a fixed point of $\RR$. 
Then the fixed point $\Phi$ extends to a fixed point of 
$\RR_\delta$ as long as $1+\delta < \sqrt{8}$ and 
$\Phi(1+\delta) > -(1+\delta)$. Concretely we have that there exist 
$\delta_0$ (small enough) such that 
$\Phi$ extends to a fixed point of $\RR_\delta$ for any  $0<\delta<\delta_0$.
For the rest of this paper $\delta$ is fixed equal to $\delta_0$.

\subsubsection{Proofs}

\begin{proof}[Proof of proposition \ref{proposition RRdelta well defined}]
Given a function $\psi\in \DD(\RR_\delta)$, let $\Psi=\RR_\delta(\psi)$. 
Then 
\[
\Psi(0) = \frac{1}{a} \psi \circ \psi (0)=\frac{1}{a} \psi(1) = 1.
\]

It is easy to check that 
\begin{equation}
\label{derivada Psi}
x \Psi'(x) = x \psi'(ax) \psi'(\psi(ax))
\end{equation}

Then, using that $x\psi'(x) < 0$, for any $x\in I_\delta$ and $x\neq0$, 
we have that $x\psi'(-ax) \geq 0$, for any $x\in I_\delta$ and $x\neq 0$. 
On the other hand, for any $x \in I_\delta$ we have that $\psi(-ax) \in [b',1]$.
Using again $x\psi'(x) < 0$ and $0<a<a'<b'$ we have that $\psi'(\psi(ax)) < 0$ 
for any $x\in I_\delta$. Then we can conclude that $x \Psi'(x) < 0 $ for any 
$x\in I_\delta$ and $x\neq 0$. 

The last condition to check is that for any $x\in I_\delta$, the map 
$\Psi$ maps $x$ inside the set $I_\delta$. Using  
that $\Psi(0)=1$ and the monotonicity consequences of  $x\Psi'(x)<0$ we have
that $\Psi(x)<\Psi(0)=1$. We only have to check that $\Psi(1+\delta) > 
-1 -\delta$. Since $\Psi(1+\delta)= -\frac{1}{a} \psi(b')$, then 
the inequality holds from $\psi(b') < a' $. 
\end{proof} 

\begin{proof}[Proof of proposition 
\ref{proposition fixed points extend to RRdelta}]
A fixed point of the renormalization 
operator can be extended to the real line using recursively 
the invariance equation  $\psi(x)=\frac{1}{a}\psi\circ \psi(ax)$ to 
evaluate points from outside of $I$ (recall that $|a|<1$). 
Moreover we have to check that $\psi\in \DD(\RR_\delta)$. 
Using again that $|a|<1$, we have that $a=f(1)> -1$, 
therefore for a sufficiently small $\delta$ we will have 
$f(1+\delta) > -1-\delta$.
\end{proof}

\subsection{The renormalization operator for  
quasi-periodically forced maps}
\label{section definition of q.p. renormalization} 

In this section we define the quasi-periodic renormalization 
operator and we check that there exists a non-empty set 
of maps where it is well defined.

Consider the operator
\begin{equation}
\label{equation projection 0}
\begin{array}{rccc}
p_0:& C^r(\T \times I_\delta, I_\delta) &\rightarrow & C^r(I_\delta, I_\delta) \\
\displaystyle \rule{0pt}{3ex} & f(\theta,x) & \mapsto &  \rule{0pt}{3ex}
\displaystyle  \int_{0}^{1} f(\theta, x) d\theta .
\end{array} 
\end{equation}
If we consider the natural inclusion of $C^r(I_\delta, I_\delta)$ 
into $C^r(\T \times I_\delta, I_\delta)$ then we have that $p_0$ is a projection 
($(p_0)^2 = p_0$).

Consider $\MM_{\delta}\subset C^r(I_\delta, I_\delta)$ defined in the 
previous subsection. Then we can consider  
the space $\XX$ defined as: 
\[
\XX= \{ f \in C^r(\T \times I_\delta, I_\delta) | \thinspace 
 p_0(f) \in \MM_\delta\}, 
\]
and the decomposition $\XX=\XX_0 \oplus \XX_0^c$ of it given by the projection 
$p_0$, i.e $\XX_0=\{f \in \XX \thinspace |\text{ } p_0(f)=f\}$ and 
$\XX^c_0=\{f \in \XX \thinspace | \text{ } p_0(f)=0\}$. Note 
that from the definition of $\XX$ it follows that $\XX_0$ is an isomorphic 
copy of $\MM_\delta$.


\begin{prop}
\label{condition to belong  in X}
Let $f$ be a function in $\XX_0$ and consider 
$\gamma := f(1+\delta) +1+\delta $ (see figure \ref{MapLanf modified}). 
Consider also $h(x,\theta) \in C^{r}(\T\times I_\delta, I_\delta)$ with 
$p_0(h)=0$. If $\|h\|< \delta$ and $\|h\| < \gamma$, then $g=f+h$ belongs to $\XX$. 
\end{prop}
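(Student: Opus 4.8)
The plan is to show that $g = f + h$ satisfies the two defining conditions of $\XX$: namely that $g$ maps $\T \times I_\delta$ into $I_\delta$, and that $p_0(g) \in \MM_\delta$. The second condition is immediate since $p_0$ is linear and $p_0(h) = 0$, so $p_0(g) = p_0(f) + p_0(h) = f \in \MM_\delta$ because $f \in \XX_0$ is an isomorphic copy of an element of $\MM_\delta$. Thus the real work is checking that $g(\theta, x) \in I_\delta = [-1-\delta, 1+\delta]$ for all $(\theta, x) \in \T \times I_\delta$.

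For the range condition, first I would recall that since $f \in \XX_0 \subset \MM_\delta$, we have $f$ even with $f(0) = 1$ and $x f'(x) < 0$ for $x \neq 0$; in particular $f$ is increasing on $[-1-\delta, 0]$ and decreasing on $[0, 1+\delta]$, so $f$ attains its maximum value $1$ at $x = 0$ and its minimum over $I_\delta$ at the endpoints $x = \pm(1+\delta)$, where it equals $f(1+\delta) = \gamma - 1 - \delta$ by definition of $\gamma$. So $f(\T \times I_\delta) = [\gamma - 1 - \delta, \, 1]$. Now for the upper bound: $g(\theta,x) = f(x) + h(\theta,x) \le 1 + \|h\| < 1 + \delta$, using $\|h\| < \delta$. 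For the lower bound: $g(\theta, x) = f(x) + h(\theta, x) \ge (\gamma - 1 - \delta) - \|h\| > (\gamma - 1 - \delta) - \gamma = -1 - \delta$, using $\|h\| < \gamma$. Hence $g(\theta,x) \in (-1-\delta, 1+\delta) \subset I_\delta$, which gives $g \in C^r(\T \times I_\delta, I_\delta)$, and combined with $p_0(g) \in \MM_\delta$ we conclude $g \in \XX$.

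The only mild subtlety — and the step I would be most careful about — is justifying that the minimum of $f$ over $I_\delta$ is exactly $f(1+\delta)$ and hence identifying $\gamma - 1 - \delta$ as the correct lower bound; this uses the monotonicity built into the definition of $\MM_\delta$ (the condition $xf'(x) < 0$) together with $f$ being even. One should also note that $\gamma > 0$ is implicit: since $f$ maps $I_\delta$ into $I_\delta$, we have $f(1+\delta) \ge -1-\delta$, i.e. $\gamma \ge 0$, and in fact the hypothesis $\|h\| < \gamma$ only makes sense (is non-vacuous) when $\gamma > 0$, which holds when $f$ is in the interior in the relevant sense — but for the proof itself we simply take $\|h\| < \gamma$ as given. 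Everything else is a routine triangle-inequality estimate, so there is no serious obstacle here; the proposition is essentially a quantitative openness statement for $\XX$ around $\XX_0$.
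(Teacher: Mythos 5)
Your proof is correct and follows essentially the same route as the paper: identify $p_0(g)=f\in\MM_\delta$ by linearity, then bound $g$ above by $f(0)+\|h\|\le 1+\delta$ and below by $f(1+\delta)-\|h\|=-1-\delta+\gamma-\|h\|>-1-\delta$, using the monotonicity of $f$ coming from $x f'(x)<0$. The extra remarks you add (why $f(1+\delta)$ is the minimum over $I_\delta$, and why $\gamma\ge 0$) are implicit in the paper's argument, so there is no substantive difference.
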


In other words, we have that small quasi-periodic perturbations of 
an uncoupled one dimensional map of $\MM_\delta$ belong to 
the space considered here. With this space of functions we are able to 
define the q.p. renormalization operator. The proof of proposition 
\ref{condition to belong  in X} is at the end of the subsection. 

\begin{defin}
Given a function $g\in \XX$, we can 
define 
the {\bf renormalization} of $g$ as 
\begin{equation}
\label{operator tau}
[\TT_\omega(g)](\theta,x) :=  \frac{1}{\hat{a}} g(\theta + \omega,
g(\theta, \hat{a}x)),
\end{equation}
where $\displaystyle \hat{a} = \int_{0}^{1} 
g(\theta, 1) d\theta$. Consider the set $\DD(\TT)=
\{g \in \XX \thinspace | \text{ } \TT_\omega(g) \in \XX \}$,
then the renormalization operator 
$\TT_\omega$ is defined from $\DD(\TT)$ to $\XX$. 
\end{defin}

\begin{rem}
The choice of  $\hat{a} = \int_{0}^{1} 
g(\theta, 1) d\theta$ is somehow arbitrary.  
In a more general context, one might allow $\hat{a}$ to be indeed 
a function of the angle, but the first problem arises with 
the choice of this function. We  have chosen
$a$ to be constant in order to keep the problem as simple as possible.
For our purposes we find this approach sufficient, since we 
are always considering maps which are perturbations of 
one dimensional maps.
\end{rem}

Note that, with the definition given above, it might happen that 
$D(\TT)=\emptyset$.  Then let us ensure that this is not the case. 

The space $\XX_0$, which is an isomorphic copy $\MM_\delta$, 
is a subspace of $\XX$. Then let us define $\DD_0(\TT)$ as 
the set of functions in $\XX_0$ that correspond 
to functions of $\DD(\RR_\delta) \subset \MM_\delta$. 
For any function $g\in \DD_0(\TT)$ we have 
$\TT_\omega(g) = \RR_\delta(g) \in \XX$, therefore 
$\DD_0(\TT)$ is a subset of $\DD(\TT)$ and consequently 
$\DD(\TT)$ is not empty.  Indeed we have 
the following result on the topology of $\DD(\TT)$.

\begin{prop}
\label{proposition existence of open set domain of TT} 
There exists an open set $W$ in $\XX$  such 
that $\TT_\omega: W \rightarrow 
C^r(\T\times I_\delta, I_\delta)$ is a well defined 
continuous map. 
Additionally consider $U=\left(p_0 \circ \TT_\omega\right)^{-1} (\MM_\delta)$, then 
we have that there exists an open set $W'$ in 
$U\cap \XX$ such that $\DD_0(\TT) \subset W'\subset \DD(\TT)$. 
\end{prop}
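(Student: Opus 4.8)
The plan is to exhibit the open set $W$ directly as a neighborhood in $\XX$ on which the defining formula (\ref{operator tau}) produces a map into $C^r(\T\times I_\delta, I_\delta)$, and then to carve out $W'$ inside $U\cap\XX$ by the continuity of the relevant functionals. First I would note that the functional $g\mapsto \hat a(g):=\int_0^1 g(\theta,1)\,d\theta$ is a bounded linear (hence continuous) map from $\XX$ to $\R$; in particular the set $\{g\in\XX : \hat a(g)\neq 0\}$ is open. On this set the composition $(\theta,x)\mapsto \frac{1}{\hat a}\,g(\theta+\omega, g(\theta,\hat a x))$ is a well-defined element of $C^r(\T\times I_\delta,\R)$: the only points to check are that the inner evaluation $g(\theta,\hat a x)$ makes sense (which requires $|\hat a|(1+\delta)\le 1+\delta$, i.e. $|\hat a|\le 1$, a condition that defines an open subset of $\XX$ once one excludes $|\hat a|\ge 1$; near $\DD_0(\TT)$ one has $|\hat a|<1$ so this is automatic on a neighborhood) and that the resulting function takes values in $I_\delta$. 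For the range condition I would use that composition $C^r\times C^r\to C^r$ is continuous on the appropriate domain, so $g\mapsto \TT_\omega(g)$ is continuous wherever it is defined; hence $(\TT_\omega)^{-1}\big(\mathrm{int}\,C^r(\T\times I_\delta, I_\delta)\big)$ — more precisely, the set of $g$ for which $\TT_\omega(g)(\T\times I_\delta)$ is contained in the interior of $I_\delta$ — is open, and I would take $W$ to be this open set intersected with $\{|\hat a(g)|<1\}$.

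Next I would verify $W\neq\emptyset$ and, more specifically, that $W$ meets the region we care about. By Proposition \ref{condition to belong in X} every sufficiently small q.p. perturbation $g=f+h$ of $f\in\DD_0(\TT)$ lies in $\XX$, and by Propositions \ref{proposition RRdelta well defined} and \ref{proposition fixed points extend to RRdelta} (and the discussion of $\DD(\RR_\delta)$ being open in $\MM_\delta$, Remark \ref{remmark open domain 1d}) the image $\RR_\delta(f)=\TT_\omega(f)$ lies in the interior of $\MM_\delta\subset C^r(I_\delta,I_\delta)$ for $f\in\DD_0(\TT)$; since $\RR_\delta(f)$ maps $I_\delta$ strictly inside $I_\delta$ (the proof of Proposition \ref{proposition RRdelta well defined} gives $\RR_\delta(\psi)(1+\delta)>-1-\delta$ and $\RR_\delta(\psi)(x)<1$), the same is true for $\TT_\omega(g)$ when $g$ is $C^0$-close to $f$, by continuity of $\TT_\omega$ and compactness of $\T\times I_\delta$. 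Therefore a whole $\XX$-neighborhood of $\DD_0(\TT)$ lies in $W$.

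Finally, for the second assertion I would argue as follows. The map $p_0:C^r(\T\times I_\delta,I_\delta)\to C^r(I_\delta,I_\delta)$ is continuous (it is an average, hence bounded linear), so $p_0\circ\TT_\omega$ is continuous on $W$, and $U=(p_0\circ\TT_\omega)^{-1}(\MM_\delta)$ makes sense; since $\MM_\delta$ is not open (it has the two defining \emph{inequalities} $\psi(0)=1$, which is a closed condition) one must be slightly careful, but the subset $\DD(\RR_\delta)\subset\MM_\delta$ \emph{is} open in $\MM_\delta$, and for $g\in\DD_0(\TT)$ we have $p_0(\TT_\omega(g))=\RR_\delta(g)\in\DD(\RR_\delta)$. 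Let $V\subset\MM_\delta$ be an open (in $\MM_\delta$) neighborhood of $\RR_\delta(\DD_0(\TT))$ contained in $\DD(\RR_\delta)$; then $W':=W\cap(p_0\circ\TT_\omega)^{-1}(V)$ is open in $U\cap\XX$, contains $\DD_0(\TT)$, and every $g\in W'$ satisfies $\TT_\omega(g)\in\XX$ (because $p_0(\TT_\omega(g))\in\DD(\RR_\delta)\subset\MM_\delta$ and $\TT_\omega(g)\in C^r(\T\times I_\delta,I_\delta)$ by $g\in W$), hence $W'\subset\DD(\TT)$. The main obstacle is bookkeeping around the fact that $\MM_\delta$ and $\DD(\TT)$ are defined by a mix of open and closed conditions: the cleanest route is to replace $\MM_\delta$ throughout by its open subset $\DD(\RR_\delta)$ wherever openness is needed, using that the fixed-point and its perturbations land there, and to rely on the continuity of the three building blocks — the linear functional $\hat a$, the bilinear composition map on $C^r$, and the linear projection $p_0$ — together with compactness of $\T\times I_\delta$ to turn the strict range inequalities into open conditions.
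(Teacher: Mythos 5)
Your treatment of the first assertion is fine and is essentially the paper's own route: continuity of the bounded linear functional $\hat a$, continuity of the composition map in the $C^r$ topology (the paper invokes \cite{Irw72} for this), and the remark that, by compactness of $\T\times I_\delta$, the range condition can be imposed through the open condition $\|\TT_\omega(g)\|_\infty<1+\delta$; the verification that $\DD_0(\TT)$ lies in $W$ via the strict inequalities $\RR_\delta(\psi)(x)\leq 1$ and $\RR_\delta(\psi)(1+\delta)>-1-\delta$ is also the same. (Your phrase ``$\RR_\delta(f)$ lies in the interior of $\MM_\delta$'' is not literally meaningful -- $\MM_\delta$ has empty interior in $C^r(I_\delta,I_\delta)$ because of the closed condition $\psi(0)=1$ -- but what you actually use is only the strict range inequalities, so this is a harmless slip.)

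The second assertion, however, has a genuine gap. You claim that $p_0(\TT_\omega(g))=\RR_\delta(g)\in\DD(\RR_\delta)$ for $g\in\DD_0(\TT)$ and then require an open set $V\subset\MM_\delta$ with $\RR_\delta(\DD_0(\TT))\subset V\subset\DD(\RR_\delta)$. But $\RR_\delta$ maps $\DD(\RR_\delta)$ into $\MM_\delta$, not into $\DD(\RR_\delta)$: renormalizability is not preserved by renormalization (otherwise every once-renormalizable map would be infinitely renormalizable), so $\RR_\delta(\DD_0(\TT))\not\subset\DD(\RR_\delta)$, the set $V$ you need does not exist, and your $W'$ cannot be built so as to contain all of $\DD_0(\TT)$. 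The detour is also unnecessary, because the statement only asks for $W'$ to be open in the subspace $U\cap\XX$, and on $U$ the condition $p_0(\TT_\omega(g))\in\MM_\delta$ holds by definition of $U$; there is no need to replace $\MM_\delta$ by an open subset of it (nor would that help for openness in $\XX$, since $\DD(\RR_\delta)$ is open only in $\MM_\delta$, not in $C^r(I_\delta,I_\delta)$). Concretely, $W':=W\cap U\cap\XX$ already does the job: it is open in $U\cap\XX$, it contains $\DD_0(\TT)$ (for uncoupled renormalizable $g$ one has $p_0(\TT_\omega(g))=\RR_\delta(g)\in\MM_\delta$), and for any $g\in W'$ the first part gives $\TT_\omega(g)\in C^r(\T\times I_\delta,I_\delta)$ while $g\in U$ gives $p_0(\TT_\omega(g))\in\MM_\delta$, hence $\TT_\omega(g)\in\XX$ and $g\in\DD(\TT)$. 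The paper proceeds in this spirit, additionally cutting $W\cap U$ with the open conditions on $\|\TT_\omega(g)-p_0(\TT_\omega(g))\|$ so as to invoke proposition \ref{condition to belong  in X}; at no point does it need $p_0(\TT_\omega(g))$ to be itself renormalizable.
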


\begin{rem}
\label{Remmark dependency of set on parameters}
Note that the sets $\XX$, $\DD(\TT)$, $\DD_0(\TT)$ 
and the operator $\TT_\omega$ depend 
on the value $\delta$, but here we do not make it explicit
to keep the notation simple. On the other 
hand, we made explicit the dependence of $\TT_\omega$ on 
$\omega$, but not in the set $\DD(\TT)$ and $\DD_0(\TT)$, which a priori 
should also depend on $\omega$. The set  $\DD_0(\TT)$ is 
an isomorphic copy of $\DD(\RR_\delta)$, then it does not depend on 
$\omega$. The set $\DD(\TT)$ actually depends on $\omega$, 
but it is omitted from the notation for simplicity. 
\end{rem}

\begin{rem} 
\label{remark doubling of the rotation number}
Consider a map $F$ like (\ref{q.p. forced system interval}). We have 
that the map is determined by a function $f\in \XX$ and a value 
of $\omega\in \T$. Then we can define the renormalization of $(\omega,f)$ 
as the map determined by $(2\omega, \TT_\omega f)$, which acts 
from $\T \times \DD(\TT)$ to $\T \times \XX$. Note that the frequency 
$\omega$ has been doubled. This is due to the fact that 
the renormalization of a map is constructed
 as the affine transformation of the map iterated twice. 
For convenience, in the remaining of this section we will study 
$\TT_\omega$ as an operator acting from $\DD(\TT)$ to $\XX$ 
depending on the parameter $\omega$. In section \ref{chapter application}
we will take into account the doubling of the 
rotation number again. 
\end{rem}

\subsubsection{Proofs}

\begin{proof}[Proof of proposition \ref{condition to belong  in X}] 
Note that $p_0(g)=p_0(f) + p_0(h) = f\in \MM_\delta$. 
To prove that $g\in \XX$, it is only necessary to check that
$g\in C^r(\T\times I_\delta, I_\delta)$. The map $g$ is $C^r$ 
for being the linear combination of $C^r$ maps. The
function $g$ belongs to  $C^r(\T\times I_\delta, I_\delta)$ 
if $g(\theta,x) \in I_\delta$ for any $\theta \in \T$ and $x \in I_\delta$. 

We begin with the upper bound. As $f(x) \leq f(0) =1 $ 
for any $x\in I_\delta$, then
\[
g(\theta, x) = f(x) + h(\theta,x) \leq f(x) + \|h\| \leq f(0) +\eps = 1+\eps,
\]
for any $(\theta,x) \in  \T\times I_\delta$.

Now we check the lower bound. We have that 
$\gamma$ only depends on $f$ and it is always greater or equal to 0.
On the other hand, we have $f(x) \geq f(1+\delta)$ for any $x\in I_\delta$. Then
\[
g(\theta, x) = f(x) + h(\theta,x) \geq f(x) - \|h\|  \geq 
f(1+\delta) - \|h\| = -1-\delta+\gamma-\|h\|,
\]
which implies that $g(\theta,x) \geq -1-\delta$ if $\|h\|<\gamma$.
\end{proof}


\begin{proof}[Proof of proposition 
\ref{proposition existence of open set domain of TT}]
First of all we need to build an open set where the 
q.p. renormalization operator (\ref{operator tau}) is well 
defined. With this aim we have the following lemma.

\begin{lem}
\label{lemma open domain} 
There exists an open set $U_1\subset \XX$ with $\DD_0(\TT) 
\subset U_1 $ such that the map 
\[
\begin{array}{rccc}
F_1:& U_1\subset C^r(\T \times I_\delta, I_\delta) &\rightarrow & C^r(\T 
\times I_\delta, I_\delta) \\
\displaystyle \rule{0pt}{3ex} & g= g(\theta,x) & \mapsto &  \rule{0pt}{3ex}
\displaystyle [F_1(g)](\theta,x) =  g(\theta,\hat{a} x), 
\end{array} 
\] 
with $\displaystyle \hat{a}=\int_0^1 g(\theta,1) d\theta$, is well defined
and continuous. 
\end{lem}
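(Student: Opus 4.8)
The plan is to establish Lemma \ref{lemma open domain} first, since it provides the building block for the whole proposition, and then assemble the proposition from it together with Proposition \ref{condition to belong in X}.

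First I would prove Lemma \ref{lemma open domain}. The key observation is that the map $g \mapsto \hat a = \int_0^1 g(\theta,1)\,d\theta$ is a bounded linear functional on $C^r(\T\times I_\delta, I_\delta)$, hence continuous; on $\DD_0(\TT)$ it takes values $a = \psi(1) \in (-1,0)$ with $a' = (1+\delta)a \in (-1-\delta,0)$ strictly inside $I_\delta$. By continuity of this functional and compactness considerations on $\DD_0(\TT)$ (or just working locally near each point), there is an open neighborhood $U_1$ of $\DD_0(\TT)$ in $\XX$ on which $|\hat a|(1+\delta) < 1+\delta$, i.e. $\hat a x \in I_\delta$ for all $x \in I_\delta$. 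On such $U_1$ the formula $[F_1(g)](\theta,x) = g(\theta,\hat a x)$ makes sense as an element of $C^r(\T\times I_\delta, I_\delta)$. Continuity of $F_1$ follows because $g \mapsto \hat a$ is continuous and composition/affine rescaling in the $x$-variable is continuous on $C^r$; one estimates $\|g_1(\theta,\hat a_1 x) - g_2(\theta, \hat a_2 x)\|$ by $\|g_1-g_2\|$ plus a term controlled by $|\hat a_1 - \hat a_2|$ times a derivative bound of $g_2$, using that the relevant $C^r$ norms are locally bounded.

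Next I would build the open set $W$ for $\TT_\omega$ itself. Write $\TT_\omega(g)(\theta,x) = \frac{1}{\hat a} g(\theta+\omega, [F_1(g)](\theta,x))$. Starting from $U_1$, I would shrink to an open subset $W$ on which three things hold: $\hat a$ stays bounded away from $0$ (so $1/\hat a$ is well defined and continuous, again using that $\hat a$ is continuous and nonzero on $\DD_0(\TT)$); the inner composition $g(\theta+\omega, F_1(g)(\theta,x))$ stays in the $x$-domain of $g$, which it does since $F_1(g)$ already maps into $I_\delta$; and finally $\frac{1}{\hat a} g(\theta+\omega, F_1(g)(\theta,x)) \in I_\delta$, which for $g \in \DD_0(\TT)$ reduces exactly to the inequalities defining $\DD(\RR_\delta)$ verified in Proposition \ref{proposition RRdelta well defined}, so by continuity it persists on a neighborhood. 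On this $W$, $\TT_\omega$ is a composition of continuous maps (the functional $g\mapsto \hat a$, division by it, the shift $\theta\mapsto\theta+\omega$ which is an isometry, $F_1$, and composition), hence continuous, and lands in $C^r(\T\times I_\delta, I_\delta)$.

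For the last assertion, set $U = (p_0\circ\TT_\omega)^{-1}(\MM_\delta)$. For $g \in \DD_0(\TT)$ we have $\TT_\omega(g) = \RR_\delta(g) \in \XX_0$, so $p_0(\TT_\omega(g)) = \RR_\delta(g) \in \MM_\delta$ by Proposition \ref{proposition RRdelta well defined}; thus $\DD_0(\TT) \subset U$. Since $\MM_\delta$ is open in $C^r(I_\delta,I_\delta)$ (its defining conditions $\psi(0)=1$ — wait, that is a closed condition, but combined with the structure it is the relevant locally-defined set; more carefully, $U\cap W$ is where we work and $p_0$, $\TT_\omega$ are continuous) and $p_0\circ\TT_\omega$ is continuous on $W$, the set $W' := W \cap U$ relative to $U\cap\XX$ is open and contains $\DD_0(\TT)$. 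It remains to check $W' \subset \DD(\TT)$: for $g \in W'$, $\TT_\omega(g) \in C^r(\T\times I_\delta,I_\delta)$ by construction of $W$, and $p_0(\TT_\omega(g)) \in \MM_\delta$ by definition of $U$, which together mean exactly $\TT_\omega(g) \in \XX$, i.e. $g \in \DD(\TT)$. Shrinking $W'$ slightly if needed to make it genuinely open in $U\cap\XX$ finishes the argument.

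The main obstacle I anticipate is the bookkeeping around $\MM_\delta$ not being an honestly open subset of $C^r(I_\delta,I_\delta)$ because of the normalization $\psi(0)=1$: one has to be careful that "open in $U\cap\XX$" is the right relative notion and that $\DD(\RR_\delta)$ being open in $\MM_\delta$ (Remark \ref{remmark open domain 1d}) transfers correctly through $p_0$ and $\TT_\omega$. The other delicate point is the continuity estimate for $F_1$ and for the division by $\hat a$: one must keep $\hat a$ uniformly bounded away from $0$ on the neighborhood and control $C^r$ norms of the composed functions locally uniformly, which is routine but needs the perturbative smallness built into the neighborhoods.
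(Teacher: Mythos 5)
Your proof of the lemma is correct and essentially the paper's own argument: you observe that $g\mapsto\hat{a}$ is a bounded linear functional (evaluation at $x=1$ composed with $p_0$), take $U_1$ to be the open set where $|\hat{a}|<1$ (which contains $\DD_0(\TT)$ because there $\hat{a}=\psi(1)\in(-1,0)$), and deduce well-definedness from $\hat{a}I_\delta\subset I_\delta$, exactly as in the paper; the extra material assembling the full proposition also follows the paper's route. Two cosmetic remarks only: no compactness of $\DD_0(\TT)$ is needed since $\hat{a}^{-1}((-1,1))$ is already open, and for continuity of $F_1$ in the $C^r$ topology the term in $|\hat{a}_1-\hat{a}_2|$ is controlled by a modulus of continuity of the top-order derivatives of $g_2$ rather than a Lipschitz-type derivative bound, which does not affect the conclusion.
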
 

\begin{proof}
Given a function $g \in \XX$ we have that $p_0(g)\in \MM_\delta$. 
Note that the value $\hat{a}$ as a functional 
operator $\hat{a}: \C^r(\T\times I_\delta, I_\delta) \rightarrow I_\delta$
is equal to the evaluation map 
at $x=1$ composed with the projection $p_0$. This is a 
bounded linear operator, therefore we have that 
$\hat{a}: C^r(\T\times I_\delta,I_\delta) \rightarrow I_\delta$ is  
continuous.
Then we can consider $U_1= \hat{a}^{-1}((-1,1))$, which 
is an open set because it is the preimage of 
an open set by a continuous function. 
For any $g\in U_1$ using $|\hat{a}|<1$ 
we have  
\[
\sup_{(\theta,x)\in \T\times I_\delta} |g(\theta,\hat{a}x)|\leq
\sup_{(\theta,x)\in \T\times I_\delta} |g(\theta,x)|\leq 1+\delta.
\] Hence $g(\theta,\hat{a}x)$ is well defined 
for any $g\in U_1$ and $(\theta,x)\in \T\times I_\delta$. 
Moreover for any $g\in \DD_0(\TT)$ we 
have $\hat{a}(g)=\left[p_0(g)\right](1) \in (-1,0)$, 
which proves that $\DD_0(\TT)\subset U_1$.  
\end{proof} 

As discussed in the proof of lemma \ref{lemma open domain}
above we have that $\hat{a}: U_1\rightarrow (-1,1)$ defined as 
$\hat{a}(g) = \int_0^1 g(\theta,1)d\theta$ is  a
continuous function.  

Using the results on the smoothness of the composition map from \cite{Irw72} 
and lemma \ref{lemma open domain} above, 
we have that the map
\[
\begin{array}{rccc}
F_2:& U_1\subset C^r(\T \times I_\delta, I_\delta) &\rightarrow & C^r(\T 
\times I_\delta, I_\delta) \\
\displaystyle \rule{0pt}{3ex} & g= g(x,\theta) & \mapsto &  \rule{0pt}{3ex}
\displaystyle [F_2(g)](\theta,x) = g(\theta+\omega, g(\theta,\hat{a} x)), 
\end{array} 
\]
is well defined and is also continuous. 

On the other hand, let $U_2 =p_0^{-1} (\DD_0(\TT))$ and consider 
\[
\begin{array}{rccc}
F_3:& U_2 \subset C^r(\T \times I_\delta, I_\delta) &\rightarrow & C^r(\T, \R ) \\
\displaystyle \rule{0pt}{3ex} & g= g(x,\theta) & \mapsto &  \rule{0pt}{3ex}
\displaystyle [F_3(g)](x) = \frac{1}{\hat{a}} x.
\end{array} 
\]
For any $g\in U_2$ we have $\hat{a}(g)<0$, therefore 
the map $F_3$ is well defined. Indeed, we have that it 
is continuous with respect to $g$. 

Finally note that $\TT_\omega$ is obtained as the composition
$(F_3(g))\circ(F_2(g))$. Using the results from \cite{Irw72} 
we have that $\TT_\omega$ is 
continuous (and it is well defined) as an operator 
$\TT_\omega: U_3 \subset C^r(\T \times I_\delta, I_\delta) \rightarrow
C^r(\T \times I_\delta, \R )$, where $U_3 = U_2 \cap U_1$.
Note that $\DD_0(\TT_\omega)$ is the image by the inclusion of $\DD(\RR_\delta)$. 
Therefore we have $\DD_0(\TT_\omega)\subset U_2$ and, consequently,
$\DD_0(\TT_\omega)\subset U_3$.

Note that the image space of the operator is  $\TT_\omega$ is not 
the desired one. Since $\T \times I_\delta$ is compact, we have 
that the map $N:C^r(\T \times I_\delta, \R )\rightarrow [0,\infty)$ defined 
as $N(g)= \|g \|_{\infty}$ is continuous. Consider the set 
$U_4:=N^{-1}([0,1+\delta))$ which is an open subset of $C^r(\T \times I_\delta, \R )$. 
At this point we can define the set $W$  in the statement of the
proposition as
\[W= \TT_\omega ^{-1}( U_4) \cap U_3.\] 
Using this construction of the set $W$ we have $\TT_\omega: W
\subset C^r(\T \times I_\delta, I_\delta) \rightarrow
C^r(\T \times I_\delta, I_\delta)$. Again we have $\DD_0(\TT_\omega)\subset W$
due to the fact that for any $g\in \DD_0(\TT_\omega)$ we 
have $\|\TT_\omega(g)\|_{\infty}= \|\RR_\delta(g)\|_{\infty}  < 1+\delta$. 
This concludes the proof of the first assertion in the proposition.

We check now the second assertion of the proposition. 
Consider $U=\left(p_0 \circ \TT_\omega\right)^{-1} (\MM_\delta)$ and 
$U_5=W\cap U$. Consider also the following auxiliary functions, 
\[
\begin{array}{rccc}
F_4:& U_5 \subset C^r(\T \times I_\delta, I_\delta) &\rightarrow & [0,+\infty), \\
\displaystyle \rule{0pt}{3ex} & g= g(x,\theta) & \mapsto &  \rule{0pt}{3ex}
\displaystyle \|\TT_\omega(g) - p_0(\TT_\omega(g)) \|.
\end{array} 
\]
and 
\[
\begin{array}{rccc}
F_5:& U_5 \subset C^r(\T \times I_\delta, I_\delta) &\rightarrow & \R, \\
\displaystyle \rule{0pt}{3ex} & g= g(x,\theta) & \mapsto &  \rule{0pt}{3ex}
\displaystyle 1+\delta +  [\TT_\omega(g)](1+\delta) - 
\|\TT_\omega(g) - p_0(\TT_\omega(g)) \|).
\end{array} 
\]

Now we can define the set $W'$  in the statement of the 
proposition as 
\[W':= U_5\cap F_4^{-1}([0,\delta))\cap F_5^{-1}((-\delta,\delta)).\] 

First let us check that $\DD_0(\TT_\omega)\subset W \subset \DD(\TT)$. 
For any map $g\in \DD_0(\TT_\omega)$ we have that 
$\TT_\omega(g) =\RR_\delta(g)= p_0(\RR_\delta(g))= p_0(\TT_\omega(g))$; 
then it follows easily that $\DD_0(\TT_\omega)\subset F_4^{-1}([0,\delta))$. 
Moreover as $\TT_\omega(g) = \RR_\delta(g)\in \MM_\delta$, we have
$[\RR_\delta(g)](1+\delta) < 1+\delta$,  which implies that 
$g\in  F_5^{-1}(-\delta,\delta)$. 

Finally, we have to check  $W \subset \DD(\TT)$, which is equivalent to
prove that $\TT_\omega(g)\in \XX$ for any $g\in W$. Given $g\in W$, 
we have 
$\TT_\omega(g) = p_0(\TT_\omega(g)) + \TT_\omega(g) - p_0(\TT_\omega(g))$. 
From $g\in U_5$ we have that $p_0(\TT_\omega(g))\in \MM_\delta$. Moreover
 from $g\in  F_4^{-1}([0,\delta))$ we have 
$ \|\TT_\omega(g) - p_0(\TT_\omega(g)) \|< \delta$ and from 
 $g\in  F_5^{-1}([0,\delta))$ we have $ \|\TT_\omega(g) - p_0(\TT_\omega(g)) \|< 
1+\delta + \left[p_0(\TT_\omega)\right](1+\delta) $. Note 
that as $p_0\left(\TT_\omega(g) - p_0(\TT_\omega(g))\right)=0$, 
we can apply proposition \ref{condition to belong  in X} and then 
it follows that $g\in \XX$. 
\end{proof}

\subsection{Study of the operator $\TT_\omega$}
\label{subsection study of TT}

Let us follow with the study of 
the operator $\TT_\omega$. In this section we start showing that the 
fixed points of $\RR_\delta$ extend to 
fixed points of $\TT_\omega$. Then we give a result on the 
differentiability of $\TT_\omega$, in the $C^r$ topology. 
With this result it becomes evident that the $C^r$ 
topology is a bad choice for the study of the operator. 
Lastly, we introduce the topology of real analytic 
maps and we check that the operator is 
well defined and differentiable if certain hypothesis (which 
will be called {\bf H0}) is satisfied. Again, all the proofs have been moved to 
the end of the section. 
%
\begin{prop}
\label{proposition fixed points extend}
The operator $\TT_\omega$ restricted to  the set $\DD_0(\TT)$,
is isomorphically conjugate to $\RR_\delta$. Concretely we 
have that any fixed point of $\RR_\delta$ extends 
to a fixed point of $\TT_\omega$. 
\end{prop}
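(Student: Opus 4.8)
The plan is to observe that on the subspace of $\theta$-independent maps the operator $\TT_\omega$ reduces \emph{literally} to $\RR_\delta$, so that the asserted ``isomorphic conjugacy'' is nothing more than the natural identification $\XX_0\cong\MM_\delta$, and the statement about fixed points then drops out immediately.

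First I would make the identification explicit: let $\iota\colon\MM_\delta\to\XX_0$ be the isomorphism sending a one-dimensional map $\psi$ to the map $(\theta,x)\mapsto\psi(x)$ that does not depend on $\theta$. By construction $\DD_0(\TT)=\iota(\DD(\RR_\delta))$. Now take $g=\iota(\psi)$ with $\psi\in\DD(\RR_\delta)$. Since $g(\theta,1)=\psi(1)$ is constant in $\theta$, we get $\hat a=\int_0^1 g(\theta,1)\,d\theta=\psi(1)=a$, which is exactly the scaling constant used in the definition (\ref{renormalization operator lanford}) of $\RR_\delta$. Substituting into (\ref{operator tau}) gives
\[
[\TT_\omega(g)](\theta,x)=\frac1a\,g\big(\theta+\omega,g(\theta,ax)\big)=\frac1a\,\psi\big(\psi(ax)\big)=[\RR_\delta(\psi)](x),
\]
which again is independent of $\theta$. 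Hence $\TT_\omega(\iota(\psi))=\iota(\RR_\delta(\psi))\in\XX_0\subset\XX$, so $\iota(\psi)\in\DD(\TT)$ (recovering $\DD_0(\TT)\subset\DD(\TT)$), and $\TT_\omega\circ\iota=\iota\circ\RR_\delta$ on $\DD(\RR_\delta)$. This intertwining relation, together with the fact that $\iota$ is an isomorphism onto its image $\XX_0$, is precisely the claimed conjugacy.

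Finally I would deduce the fixed-point statement: if $\Phi$ is a fixed point of $\RR_\delta$ — and by Proposition \ref{proposition fixed points extend to RRdelta} and the remarks following it, the one-dimensional fixed point does belong to $\DD(\RR_\delta)$ for the chosen $\delta=\delta_0$ — then $\iota(\Phi)\in\DD_0(\TT)\subset\DD(\TT)$ and $\TT_\omega(\iota(\Phi))=\iota(\RR_\delta(\Phi))=\iota(\Phi)$, so $\iota(\Phi)$ is a fixed point of $\TT_\omega$. I do not expect any genuine obstacle in this argument; it is essentially unwinding the definitions. The only points requiring a little care are the domain-membership checks — that $\iota(\Phi)\in\DD(\TT)$ and that $\Phi\in\DD(\RR_\delta)$ — both of which have already been arranged in the preceding propositions and remarks, so the proof is short.
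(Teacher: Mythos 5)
Your proposal is correct and follows essentially the same route as the paper: on $\theta$-independent maps one has $\hat a=\psi(1)=a$, so $\TT_\omega$ literally coincides with $\RR_\delta$ under the identification $\MM_\delta\cong\XX_0$, and fixed points carry over immediately. Your version simply spells out the intertwining relation $\TT_\omega\circ\iota=\iota\circ\RR_\delta$ and the domain checks that the paper's one-line proof leaves implicit.
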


We have the following
result on the differentiability of $\TT_\omega$.

\begin{thm}
\label{theorem renormalization oprator is differentiable}
Let $\TT_\omega:\DD(\TT) \rightarrow \XX$ be the renormalization operator in 
the $C^r$-topology, and consider $\Phi$ a fixed point of 
the operator. If $\Phi\in \DD_0(\TT)\cap
C^{r+s}(\T\times I_\delta,I_\delta)$ then we have that 
there exists $U$ an open neighborhood of $\Phi$ in
$\DD_0(\TT)\cap C^{r+s} (\T\times I_\delta,I_\delta)$
such that $\TT_\omega$ 
is a $C^s$ operator in $U$. 
Moreover, if $s\geq 1$ for any point $\Psi\in U $ we have 
that the Gateaux derivative of
$\TT_\omega$ on $\Psi$ in the direction $h$ is given by
\begin{equation} 
\label{diferencial de TT 0}
\begin{array}{rl}
\displaystyle [d \TT_\omega(\Psi, h)](\theta,x) = &  
\displaystyle  \phantom{+}\frac{1}{a} \psi'(\psi( a x )) h(\theta,
a x)  +  \frac{1}{a} h(\theta + \omega, \psi(a x)) \\ 
\rule{0pt}{4ex} & \displaystyle + \frac{b}{a}  \psi'(\psi(ax))\psi'(ax) x
- \frac{b}{a^2} \psi(\psi(ax)),
\end{array}
\end{equation}
where $\psi=p_0(\Psi)$,  
$a=\psi(1)$ and $b= \displaystyle  
\int_0^{1} h(\theta,1)d\theta$.
\end{thm}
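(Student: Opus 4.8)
The plan is to reduce the statement to a combination of two facts: (i) the composition operator is smooth in the relevant topologies, and (ii) the scaling factor $\hat{a}(g) = \int_0^1 g(\theta,1)\,d\theta$ depends on $g$ through a bounded linear operator. First I would recall from the proof of proposition \ref{proposition existence of open set domain of TT} that $\TT_\omega$ factors as $F_3 \circ F_2$, where $F_2(g)(\theta,x) = g(\theta+\omega, g(\theta,\hat a x))$ and $F_3$ divides by $\hat a$; the only genuinely nonlinear ingredients are the inner substitution $x \mapsto \hat a x$ (which couples the linear functional $\hat a$ with the argument) and the self-composition. The map $\hat a : C^{r+s}(\T\times I_\delta, I_\delta) \to \R$ is, as noted in lemma \ref{lemma open domain}, the evaluation at $x=1$ composed with $p_0$, hence bounded linear, hence $C^\infty$. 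The substitution and composition operators are $C^s$ as maps $C^{r+s} \to C^r$ by the results of Irwin \cite{Irw72} (one loses $s$ derivatives, which is exactly why the theorem is stated with the source space $C^{r+s}$ and the target space $C^r$, and why the $C^r$ topology alone is a ``bad choice''). Composing finitely many $C^s$ maps and one $C^\infty$ map gives that $\TT_\omega$ is $C^s$ on a neighborhood $U$ of $\Phi$; openness of $U$ follows from openness of $\DD(\TT)$ and continuity, and the fact that $\Phi \in \DD_0(\TT)$ guarantees we are in the interior of the domain built in proposition \ref{proposition existence of open set domain of TT}.

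For the formula (\ref{diferencial de TT 0}) when $s \geq 1$, I would compute the Gateaux derivative directly from the definition. Write $\hat a(\Psi + th) = a + tb$ with $a = \psi(1)$, $b = \int_0^1 h(\theta,1)\,d\theta$ (using $p_0(\Psi) = \psi$ and linearity of $\hat a$). Then
\[
[\TT_\omega(\Psi+th)](\theta,x) = \frac{1}{a+tb}(\Psi+th)\bigl(\theta+\omega,\ (\Psi+th)(\theta,(a+tb)x)\bigr),
\]
and I differentiate at $t=0$ using the product/chain rule. There are four contributions: differentiating the prefactor $1/(a+tb)$ gives the term $-\frac{b}{a^2}\psi(\psi(ax))$ (here the inner iterate at $t=0$ is $\Psi(\theta,ax) = \psi(ax)$ since $\Psi$ is the extension of $\psi$, with no angular dependence at the fixed point); differentiating the outer $\Psi+th$ slot explicitly gives $\frac1a h(\theta+\omega,\psi(ax))$; differentiating through the inner argument via the chain rule, the inner perturbation splits into $h(\theta, ax)$ (perturbing the inner $\Psi+th$) plus $\psi'(ax)\cdot bx$ (perturbing the scaling $a+tb$ inside the inner slot), and these get multiplied by $\frac1a \psi'(\psi(ax))$ — the derivative of the outer $\psi$ at the base point $\psi(ax)$ — yielding $\frac1a\psi'(\psi(ax))h(\theta,ax)$ and $\frac{b}{a}\psi'(\psi(ax))\psi'(ax)x$. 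Collecting the four terms reproduces (\ref{diferencial de TT 0}) exactly.

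The main obstacle I anticipate is purely bookkeeping/regularity rather than conceptual: making precise the loss-of-derivatives in Irwin's composition lemma so that each intermediate map in the factorization lands in a space where the next one is differentiable, and checking that the neighborhood $U$ on which everything is simultaneously well defined and $C^s$ is genuinely open in $\DD_0(\TT)\cap C^{r+s}$. One must be careful that $\Psi \in U$ still has $p_0(\Psi)$ in the interior of $\DD(\RR_\delta)$ so that $a = \psi(1) \in (-1,0)$ and all the inner points $\psi(ax)$, $ax$ stay in $I_\delta$; this is where remark \ref{remmark open domain 1d} (openness of $\DD(\RR_\delta)$) and continuity of $a$, $a'$, $b'$ are invoked. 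The actual derivative computation is routine once the differentiability framework is in place, so I would present it compactly, emphasizing the identification of the four terms with their sources (prefactor, outer slot, inner slot, inner scaling).
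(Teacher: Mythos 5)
Your proposal is correct and follows essentially the same route as the paper: the $C^s$ smoothness is obtained from the boundedness/linearity of $\hat a$ (evaluation at $x=1$ composed with $p_0$) together with Irwin's result on the loss of differentiability of the composition map, and the formula (\ref{diferencial de TT 0}) is then derived by the same direct Gateaux computation, expanding $\hat a(\Psi+th)=a+tb$, the prefactor $1/(a+tb)$, and the inner and outer slots using that $\Psi\in\DD_0(\TT)$ is $\theta$-independent. The four terms you identify coincide with those collected in the paper's proof, so no gap remains beyond the bookkeeping you already flag.
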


Note that there is a ``loss of differentiability'', in the sense
that one needs to assume that the function $\Psi$ where 
we differentiate the operator is in $C^{r+s}$ while the 
operator acts in subsets of  $C^{r}(\T\times I_\delta, I_\delta)$.
This is due to the self composition in the renormalization operation. 
To skip this problem, let us introduce the topology 
of analytic functions instead of the $C^r$ one, for the 
forthcoming study of the operator. 

\begin{defin} 
\label{definition space BB} 
Let $\W$ be an open set in the complex plane containing the 
interval $I_\delta$ and let 
$\B_\rho = \{z = x + i y\in \C \text{ such that } |y| < \rho\}$.   The 
we define the set $\BB=\BB(\B_\rho,\W)$ as the space of functions 
$f: \B_\rho \times  \W \rightarrow \C$ such that:
\begin{enumerate} 
\item  $f$ is holomorphic in $\B_{\rho}\times \W$ and continuous 
in the closure of  $\B_{\rho}\times \W$.

\item $f$ is real analytic (it maps real numbers to real numbers). 

\item  $f$ is $1$-periodic in the first variable, i. e.  
$f(\theta +1,z) = f(\theta,z)$ for any $(\theta,z) \in \B_{\rho}
\times\W$. 
\end{enumerate}
\end{defin}

\begin{prop} 
\label{proposition BB is banach}
The space $\BB$ endowed with the supremum norm  
$\left( \displaystyle \| f\| = \sup_{\B_{\rho_1}\times\W}
| f(\theta,z)| \right)$  is  a Banach space. 
\end{prop}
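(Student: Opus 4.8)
The plan is to verify the three defining properties of a Banach space for $(\BB, \|\cdot\|)$: that $\|\cdot\|$ is a genuine norm, that $\BB$ is a vector space on which this norm is finite, and that $\BB$ is complete with respect to it. The vector space structure is immediate since sums and scalar multiples of holomorphic (resp. real-analytic, resp. $1$-periodic) functions are again holomorphic (resp. real-analytic, resp. $1$-periodic), and the norm is finite on $\BB$ because every $f \in \BB$ is by definition continuous on the closure of the compact-in-each-variable product domain, hence bounded on it; that $\|\cdot\|$ satisfies the norm axioms (positivity, homogeneity, triangle inequality) is the standard argument for a supremum norm. So the only real content is completeness.

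For completeness, I would take a Cauchy sequence $(f_n)$ in $\BB$ and argue as follows. Since convergence in $\|\cdot\|$ is uniform convergence on $\overline{\B_\rho \times \W}$, and the space of bounded continuous functions on this set with the sup norm is complete, there is a continuous limit $f$ with $\|f_n - f\| \to 0$. It then remains to check that $f$ inherits the three structural properties defining $\BB$. Periodicity passes to the limit trivially from $f_n(\theta+1,z) = f_n(\theta,z)$. Real-analyticity (mapping reals to reals) passes to the limit because the restriction of $f$ to real arguments is a uniform limit of real-valued functions, hence real-valued. The one genuinely non-trivial point is holomorphy of $f$ on the open set $\B_\rho \times \W$: here I would invoke the standard theorem that a locally uniform limit of holomorphic functions is holomorphic. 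Concretely, for fixed $\theta$ the functions $f_n(\theta,\cdot)$ are holomorphic on $\W$ and converge uniformly, so $f(\theta,\cdot)$ is holomorphic on $\W$; symmetrically $f(\cdot,z)$ is holomorphic on $\B_\rho$; and a function holomorphic separately in each of two variables on a product of open sets is jointly holomorphic (Hartogs, or more elementarily since we already have joint continuity, a two-variable Morera / iterated-Cauchy-integral argument suffices). This shows $f \in \BB$, completing the proof.

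The step I expect to be the main (though still routine) obstacle is precisely the verification that the limit function is holomorphic on the interior: one must be a little careful that the uniform convergence on the \emph{closed} domain $\overline{\B_\rho \times \W}$ does indeed give the locally uniform convergence on the open domain needed to apply the convergence theorem for holomorphic functions, and one should state which version of the several-variables result is being used. Everything else — the vector space axioms, finiteness of the norm via compactness and continuity on the closure, the norm axioms, and the inheritance of periodicity and real-analyticity — is a direct check.
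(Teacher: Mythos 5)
Your proof is correct, and its mathematical content is the same as the paper's, but the packaging differs: the paper does not re-prove completeness of the holomorphic part at all — it introduces the ambient space $\AAA$ of functions holomorphic in $\B_\rho\times\W$ and continuous on the closure, cites H\"ormander/Krantz for the fact that $\AAA$ with the sup norm is Banach, and then observes that $\BB$ is cut out of $\AAA$ by two closed linear conditions ($f(\theta+1,z)-f(\theta,z)=0$ and $f(\theta,x)-\overline{f(\theta,x)}=0$ on $\R\times I_\delta$), so $\BB$ is a closed subspace and hence Banach. You instead run the Cauchy-sequence argument directly, which buys self-containedness at the cost of redoing the one nontrivial step (holomorphy of the uniform limit); note that your detour through separate holomorphy plus Hartogs/Osgood is unnecessary, since the limit of a sequence of \emph{jointly} holomorphic functions converging locally uniformly is jointly holomorphic by the several-variables Weierstrass theorem (Cauchy integral formula on polydiscs), exactly the kind of ``basic property'' the paper is citing. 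One small inaccuracy to fix in your write-up: $\overline{\B_\rho}$ is an unbounded strip, so the closure of $\B_\rho\times\W$ is \emph{not} compact; finiteness of the sup norm for $f\in\BB$ comes from $1$-periodicity in $\theta$ (restrict to the fundamental domain $0\le\operatorname{Re}\theta\le 1$) together with boundedness of $\W$, not from compactness of the full domain — after that correction the rest of your argument (completeness of bounded continuous functions, and the fact that periodicity and reality pass to uniform limits, which is precisely the paper's ``closed conditions'' step) goes through verbatim.
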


We want to consider the quasi-periodic renormalization 
operator $\TT_\omega$ (see equation (\ref{operator tau})) 
restricted to the domain 
$ \DD(\TT) \cap \BB$, but then it is necessary to check that 
$\TT_\omega$ is well defined in the complex domain. 
For any function $f\in  \DD(\TT) \cap \BB$ 
we should check that $f( \B_{\rho}\times a \W) 
\subset \W$ (where $a\W=\{z\in \C |
\thinspace a z\in \W \}  $). In the one 
dimensional renormalization theory  the open set $\W$ 
is chosen such that this condition is satisfied. Concretely, 
this condition is 
typically checked with computer assistance, together with 
other conditions to prove the existence of fixed points (see 
\cite{Lan82,Lan92,KSW96}).
In our case we will assume that the following hypothesis is satisfied.

\begin{description}
\item[H0)] There exists an open set $\W\subset \C$ containing 
$I_\delta$ and a function $\Phi\in \BB \cap \XX_0$ such 
that $p_0(\Phi)$ is a fixed point of the  
renormalization operator $\RR_\delta$  and such that 
the closure of both $a\W$ and $p_0(\Phi)(a \W)$ is 
contained in $\W$ (where $a:=\Phi(1)$)
\end{description}

Although the results on the existence of the fixed point of 
renormalization operator done by Lanford in \cite{Lan82} are well 
accepted, in the cited article there are no proofs and many details are omitted. 
For the proof of the existence of the fixed point 
it is also necessary to check this condition. 
In \cite{Lan92}, some more details on how to prove the existence of 
the fixed point are given. Actually, 
it is claimed that the hypothesis {\bf H0} is true for the set 
\[
\left\{ z\in \C \text { such that } |z^2 -1| < \frac{5}{2} \right\}.
\] 

This set used by Lanford is more convenient in his study 
since he works in the 
set of even holomorphic functions. In the numerical computations 
from \cite{JRT11c} we use as $\W$ the disc centered at 
$\frac{1}{5}$ with radius $\frac{3}{2}$, 
and we check hypothesis {\bf H0} numerically 
(without rigorous bounds).

\begin{thm}
\label{theorem renormalization analitic set up}
Assume that {\bf H0} holds and let $\Phi$ be 
the fixed point given by this assumption. Then we have that there exists 
$U\subset \DD(\TT)\cap\BB$, an open neighborhood of $\Phi$, 
such that $\TT_\omega :U\rightarrow \BB$ is well defined, 
and $\TT_\omega$ is 
Fr\'echet differentiable for any  $\Psi \in U$ and the derivative is equal to 
\begin{equation} 
\label{diferencial de TT}
\begin{array}{rl}
\displaystyle [D \TT_\omega(\Psi, h)](\theta,x) = &  
\displaystyle  \phantom{+}\frac{1}{a} (\partial_x\Psi)
(\theta+\omega, \Psi(\theta,a x )) h(\theta, a x)  +  
\frac{1}{a} h(\theta + \omega, \Psi(\theta, a x)) \\ 
\rule{0pt}{4ex} & \displaystyle + \frac{b}{a}  (\partial_x \Psi)
(\theta+\omega,\Psi(\theta,ax))
(\partial_x \Psi (\theta,ax)) x - \frac{b}{a^2} \Psi(\theta+\omega,\Psi(\theta,ax)),
\end{array}
\end{equation}
where $a=\displaystyle \int_0^{1}\Psi(\theta,1)d\theta$ and $b= \displaystyle  
\int_0^{1} h(\theta,1)d\theta$.
\end{thm}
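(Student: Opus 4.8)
The plan is to establish Theorem \ref{theorem renormalization analitic set up} by transporting the construction of Proposition \ref{proposition existence of open set domain of TT} to the analytic category, where the composition operator is genuinely smooth, and then to differentiate the explicit formula (\ref{operator tau}) term by term. First I would observe that, under hypothesis {\bf H0}, the fixed point $\Phi$ satisfies $\overline{a\W}\subset\W$ and $\overline{p_0(\Phi)(a\W)}\subset\W$ with $a=\Phi(1)\in(-1,0)$. Since all the maps appearing in the definition of $\TT_\omega$ — the rescaling $g\mapsto g(\theta,\hat a x)$, the rotation $g\mapsto g(\theta+\omega,\cdot)$, the composition, and multiplication by $1/\hat a$ — depend continuously (indeed analytically) on $g$, and since the conditions $\overline{a\W}\subset\W$, $\overline{p_0(g)(a\W)}\subset\W$ are open conditions on $g$, there is an open neighborhood $U$ of $\Phi$ in $\DD(\TT)\cap\BB$ on which each intermediate map lands in the prescribed function space. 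The key analytic input is that on spaces of bounded holomorphic functions the composition map $(f,g)\mapsto f\circ g$ is holomorphic (hence $C^\infty$) wherever it is defined with a little room to spare: this is exactly the advantage of the analytic topology over the $C^r$ one, where Theorem \ref{theorem renormalization oprator is differentiable} exhibited a loss of derivatives. I would cite \cite{Irw72} (as the paper already does for the continuous statements) or the standard Cauchy-estimate argument for the holomorphy of composition, to conclude that $\TT_\omega:U\to\BB$ is Fr\'echet differentiable, in fact analytic.

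For the derivative formula, I would write $\TT_\omega(g)=F_3(g)\circ F_2(g)$ with $F_2(g)(\theta,x)=g(\theta+\omega,g(\theta,\hat a(g)x))$, $F_3(g)(x)=x/\hat a(g)$, and $\hat a(g)=\int_0^1 g(\theta,1)\,d\theta$, and apply the chain and product rules. The functional $g\mapsto\hat a(g)$ is bounded linear, so its derivative in the direction $h$ is simply $b:=\int_0^1 h(\theta,1)\,d\theta$; the derivative of $g\mapsto 1/\hat a(g)$ is then $-b/a^2$ at $g=\Psi$ with $a=\hat a(\Psi)$. Differentiating $F_2$ at $\Psi$ in the direction $h$ produces three contributions by the product structure of the composition $g(\theta+\omega,g(\theta,\hat a x))$: one from perturbing the outer occurrence of $g$, giving $h(\theta+\omega,\Psi(\theta,ax))$; one from perturbing the inner occurrence, giving $(\partial_x\Psi)(\theta+\omega,\Psi(\theta,ax))\,h(\theta,ax)$; and one from perturbing $\hat a$ inside the inner argument, giving $(\partial_x\Psi)(\theta+\omega,\Psi(\theta,ax))\,(\partial_x\Psi)(\theta,ax)\,b\,x$. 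Combining these with the product rule applied to $F_3(g)\circ F_2(g)$ and collecting terms yields exactly (\ref{diferencial de TT}), once one divides by $a$ and picks up the extra $-\tfrac{b}{a^2}\Psi(\theta+\omega,\Psi(\theta,ax))$ term from differentiating the $1/\hat a$ prefactor.

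The main obstacle, and the part that requires genuine care rather than bookkeeping, is the first step: verifying that $U$ can be chosen so that all intermediate images stay strictly inside $\W$ (and inside $\B_\rho$ in the angular variable), i.e. that $\Psi(\theta,a\W)\subset\W$ holds not just for $\Phi$ but on a full neighborhood, and that the rescaled argument $a\W$ itself stays in $\W$ when $a=\hat a(\Psi)$ is perturbed. This is where hypothesis {\bf H0} is used essentially: it guarantees the \emph{closures} are contained in $\W$, so there is a definite gap $\eta>0$, and a perturbation of $\Psi$ of sup-norm less than a suitable fraction of $\eta$ (together with the continuity of $\hat a$ and of evaluation) cannot push any of the relevant sets out of $\W$. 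One also has to note that the composition $F_2(g)$ again lies in $\BB$ — holomorphy and $1$-periodicity in $\theta$ are preserved under composition and the substitution $\theta\mapsto\theta+\omega$, and realness is preserved because all the operations send real inputs to real outputs. Once this open set $U$ is in hand, the differentiability and the formula follow from the smoothness of composition and elementary calculus in Banach spaces, so I would keep that portion brief and refer to \cite{Irw72} for the technical smoothness statement, mirroring the structure of the proof of Proposition \ref{proposition existence of open set domain of TT}.
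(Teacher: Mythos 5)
Your proposal is correct in substance and follows the paper's structure for the well-definedness part: exactly as in the paper, hypothesis {\bf H0} gives $\Cl\left(\Phi\left(\B_\rho\times a\W\right)\right)\subset\W$, and since this is an open condition in the sup norm it persists on a neighborhood $U$ of $\Phi$ in $\BB$. Where you diverge is in how differentiability is justified. The paper does not invoke any general smoothness-of-composition theorem; it proves Fr\'echet differentiability directly from the definition, by building a filtration $\Cl\left(\Phi\left(\B_\rho\times a\W\right)\right)=K_0\subset V_0\subset K_1\subset V_1\subset K_2\subset V_2=\W$, deriving uniform Cauchy bounds $|\partial_x f|\leq \|f\|_\infty/r$ and $|\partial^2_{x^2}f|\leq 2\|f\|_\infty/r^2$ on $K_2$, and then expanding $\TT_\omega(\Psi+h)$ term by term with explicit $O(\|h\|^2)$ remainders; the four terms of (\ref{diferencial de TT}) fall out of that expansion, which is the same algebra as your chain-rule computation. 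Your route — ``composition of holomorphic maps with room to spare is analytic, then apply the chain and product rules to $F_3(g)\circ F_2(g)$'' — is legitimate, but the citation you lean on is the weak point: \cite{Irw72} concerns the $C^r$ category and yields precisely the loss of derivatives that Theorem \ref{theorem renormalization oprator is differentiable} exhibits and that the analytic setting is introduced to avoid, so it cannot serve as the justification here. The correct justification is the Cauchy-estimate argument you mention only as an alternative; if you make that your primary tool (either by quoting a genuine holomorphy-of-composition result for bounded holomorphic functions or by writing out the second-order estimates as the paper does), your proof is complete and essentially equivalent to the paper's, just packaged at a higher level of abstraction.
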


\subsubsection{Proofs}

\begin{proof}[Proof of proposition \ref{proposition fixed points extend}]
If a map belongs to $\DD_0(\TT)$ then it does not 
depend on $\theta$, therefore the operator $\TT_\omega$ coincides
with $\RR_\delta$ composed with the inclusion of $\MM_\delta$ in $\XX$. 
\end{proof} 

\begin{proof}[Proof of theorem 
\ref{theorem renormalization oprator is differentiable}]
By definition (see equation (\ref{operator tau})) given a function
$g\in  C^r(\T \times I_\delta, I_\delta)$ we have that
$ [\TT_\omega(g)](\theta,x) :=  \displaystyle\frac{1}{\hat{a}} g(\theta + \omega,
g(\theta, \hat{a}x))$ where $\displaystyle \hat{a} =
\int_{0}^{1} 
g(\theta, 1) d\theta$. Note that $\hat{a}$ can be written as $\hat{a}= [p_0(g)](1)$. The function
$p_0:  C^{r+s}(\T \times I_\delta, I_\delta) \rightarrow  C^{r+s}(I_\delta, I_\delta)$  
defined by (\ref{equation projection 0}) is $C^s$
(actually it is a linear bounded operator). On the other hand the evaluation 
of a $C^r$ function in a concrete
value is also a $C^r$ function 
(see proposition 2.4.17 from \cite{AMR88}). 
Therefore $\hat{a}=\hat{a}(g)$ as a function of $g$  is $C^r$ as well.

Note that $\TT_\omega(g)$ 
can be written as the composition of
different functions, which are $g$ itself,
a translation in the $\theta$ variable and a scalar multiplication by $a$ (and its inverse) in the $x$
variable. Each one of these functions are $C^r$ dependent with respect to $f$
except the composition of $f$ with itself which is only a $C^s$ map, 
when we work in the $C^r$ topology 
(see \cite{Irw72}). We can conclude that $\TT_\omega$ is only a $C^s$ operator.

Now we compute explicitly the Gateaux
derivative. As $\Psi$ belongs to $\XX_0$, we have $\Psi(x,\theta)=\psi(x)$ 
and consequently
\begin{eqnarray}
\label{operator derivative computation}
\displaystyle
\left[\TT(\Psi + t h)\right] (\theta, x) & = &
\displaystyle
\frac{1}{\hat{a}} \left[ \Psi + t h \right] ( \theta +\omega,
\left[ \Psi + t h \right](\theta,\hat{a} x)) \nonumber  \\
& = &  \displaystyle  \frac{1}{\hat{a}} 
\psi (\psi( \hat{a}x)) +  
\frac{1}{\hat{a}} \psi'(\psi(\hat{a}x )) h(\theta,\hat{a}x) t \nonumber
 \\ & &  \displaystyle 
+ \frac{1}{\hat{a}}   h(\theta + \omega, \psi(\hat{a}x)) t  + O(t^2),
\end{eqnarray}
where $\displaystyle \hat{a} =
\int_{0}^{1} [\Psi + t h](1,\theta) d\theta$.

Set $a= \psi(1)$ and $\displaystyle b=\int_{0}^{1} 
h(1,\theta) d\theta$. We have that $\hat{a}= a+ tb$.
Therefore,
\begin{equation}
\label{Taylor 1 ente a}
\frac{1}{\hat a} = \frac{1}{a+tb}= \frac{1}{a} - \frac{1}{a^2} t b 
+ O(t^2),
\end{equation}
and using the chain rule we have
\begin{equation}
\label{psiopsi de hat ax}
\psi\left(\psi( \hat{a} x) \right) = \psi\left(\psi( a x) \right)  
 + \psi'\left(\psi(ax)\right) \psi'(ax) t   b x  + O(t^2). 
\end{equation}

Combining equations (\ref{operator derivative computation}),
(\ref{Taylor 1 ente a}) and (\ref{psiopsi de hat ax}) follows that
the Gateaux derivate of $\TT$ in $\Psi$ is the one given by
(\ref{diferencial de TT}).
\end{proof}

\begin{proof}[Proof of proposition \ref{proposition BB is banach}] 
Consider $\AAA$ the space of 
holomorphic functions in $\B_{\rho}\times \W$ and continuous
in the closure of  $\B_{\rho}\times \W$.
Using basic properties of the holomorphic functions 
in several variables (see \cite{Hor73, Kra82}) is 
easy to check that $\AAA$ is a Banach space. The space $\BB$ is the set 
of functions of $\AAA$ such that 
\begin{itemize}
\item $f(\theta + 1 ,z) - f(\theta, z) =0 $ for any 
$(\theta,z) $ in $\B_{\rho} \times\W$.
\item $f(\theta,x) - \overline{f(\theta,x)} = 0$ for any 
$(\theta,x)$ in $\R \times I_\delta$. 
\end{itemize}
Then $\BB$ is the preimage of a closed subset 
by a continuous function, therefore it is closed in $\AAA$ 
and consequently it is a Banach space.  
\end{proof}


\begin{proof}[Proof of the theorem 
\ref{theorem renormalization analitic set up}]
Given a function $f\in  \DD(\TT) \cap \BB$, we have 
that its image by $\TT_\omega(f)$ is given by (\ref{operator tau}). 
If we want it to be well defined we must 
check that $f( \B_{\rho}\times a \W) 
\subset \W$ (where $a\W=\{z\in \C |
\thinspace a z\in \W \}  $).

We have that $\phi(\theta,x) = \left[p_0(\phi)\right] (x)$ for any $\theta\in \B_{\rho}$. 
Using hypothesis {\bf H0}  we have that 
$\Cl\left(\Phi\left(\B_{\rho}\times a \W\right)\right)\subset \W$, 
where $\Cl(\cdot)$ denotes the closure of the set. 
If we consider now a function $f$  in a 
suitable neighborhood 
of $\Phi$ we have that it still maps $ \B_{\rho} 
\times a \W$ inside $\W$ (if 
$f$ is close enough to $\Phi$ in the topology of $\BB$).

To prove the differentiability of the map we will check it 
directly from the definition of Fr\'echet derivative.

From $\Cl\left(\Phi\left(\B_{\rho}\times a \W\right)\right)\subset \W$, 
and the fact of $\W$ being bounded it follows that 
$\Cl\left(\Phi\left(\B_{\rho}\times a \W\right)\right)$ is compact. 
Consider the following filtration of sets in the 
complex plane
\[
\Cl\left(\Phi\left(\B_{\rho}\times a \W\right)\right) = K_0 \subset V_0 \subset 
K_1 \subset V_1 \subset  K_2 \subset V_2= \W,
\]
with each $K_i$ compact and each $V_i$ open, for $i=0,1,2$. 

Consider now $U_1\subset U$ the open neighborhood of $\Phi$ in $\BB$ formed 
by the $\Psi\in \BB$  such that 
\[
\Psi\left(\B_{\rho}\times a \W\right) \subset V_0. 
\]
For any map $\Psi\in U_1$ we have that 
$\Cl\left(\Psi\left(\B_{\rho}\times \hat{a} \W\right)\right)\subset K_1$. 

On the other hand, from  $K_2 \subset \W$ and the fact 
that $K_2$ is compact and $\W$ open, it follows that 
there exist a value $r>0$ such that for any $x_0 \in K_2$ 
the ball centered on $x_0$ with radius $r$ is 
contained in $\W$. Then for any map $f\in \BB$ we have 
\[
\partial_x f(\theta,x_0) =\frac{1}{2\pi i} \int_{|z-x_0|=r} 
\frac{f(\theta,z)}{(z-z_0)^2}  d\theta. 
\]
Then it follows easily that, for any $f \in \BB$ and 
$x_0\in K_2$ we have 
\[
| \partial_x f(\theta,x_0)| \leq \frac{1}{r} \| f \|_\infty.
\]
Modifying  the same argument, we can check that 
\[
| \partial^2_{x^2} f(\theta,x_0)| \leq \frac{2}{r^2} \| f \|_\infty.
\]
Note that both bounds are uniform for any map in $U_1$.

Consider $\Psi \in U_1$, and $h \in \BB$ with $\|h\|$ small. We 
want to compute $\TT_\omega(\Psi+h)$ up to $O(\|h\|^2)$. 
First of all we have, 
\begin{equation}
\label{equation analicity 0.0}
\TT_\omega(\Psi+h)= 
\frac{1}{\hat{a}(\Psi+h)} \left[ 
\begin{array}{r}
\Psi(\theta+\omega, \Psi(\theta,\hat{a}(\Psi+h)x) + h(\theta,\hat{a}(\Psi+h)x))  + \\
h(\theta+\omega, \Psi(\theta,\hat{a}(\Psi+h)x) + h(\theta,\hat{a}(\Psi+h)x)) 
\end{array}
\right].
\end{equation}

To simplify the notation consider  
\[
a= \int_0^1 \Psi(\theta,1)d\theta , \quad 
b= \int_0^1 h(\theta,1)d\theta. 
\]
Then we have $\hat{a}(\Psi+h) = a + b$, and
\[
|b| \leq  \int_0^1 |h(\theta,1)|d\theta \leq \|h \|.
\]

Since $\Psi \in U_1$ we have that for any $h$ with $\|h\|$ sufficiently 
small $\Psi + h \in U_1$, therefore we have that 
$\Psi(\theta,(a+b)x) + h(\theta,(a+b)x) 
\in V_1$. Using the complex Taylor expansion with respect to $x$ up 
to second order we have
\begin{eqnarray}
\label{equation analicity 1.1}
\Psi(\theta+\omega, \Psi(\theta,(a+b)x) + h(\theta,(a+b)x)) &= & 
\Psi(\theta+\omega,\Psi(\theta,(a+b)x)) +
  \\ & &
(\partial_x \Psi)(\theta+\omega,\Psi(\theta,(a+b)x))  h(\theta,(a+b)x)) + \nonumber \\
& &  R_2(\theta,x) \nonumber 
\end{eqnarray}

with 
\begin{equation}
\label{equation analicity 1.01}
| R_2(\theta,x) | \leq \frac{1}{r^2}\frac{1}{1-\frac{\|h\|}{r}} \| \Psi\| \|h\|^2 =
O(\|h\|^2).
\end{equation} 

Analogously we have 
\begin{eqnarray}    
\label{equation analicity 1.2}
h(\theta+\omega, \Psi(\theta,(a+b)x) + h(\theta,(a+b)x)) &= & 
h(\theta+\omega,\Psi(\theta,(a+b)x)) +   \\ & &
R_1(\theta,x), \nonumber
\end{eqnarray}
with 
\begin{equation} 
\label{equation analicity 2.0}
| R_1(\theta,x) | \leq  \frac{1}{r}\frac{1}{1-\frac{\|h\|}{r}} \| h\| \|h\| = O(\|h\|^2).
\end{equation} 

Recall that $|b|=O(\|h\|)$  then applying Taylor expansion 
and the uniform bound on $K_2$ it follows easily that 
\begin{eqnarray}
\label{equation analicity 3.1}
\Psi(\theta,(a+b)x) &= &  \Psi(\theta,ax) + (\partial_x \Psi) 
(\theta,ax) b x + O(\|h\|^2),  \\
\label{equation analicity 3.2}
h(\theta,(a+b)x) &= &  h(\theta,ax)  +  O(\|h\|^2).  
\end{eqnarray}

Using that $\Psi\in U_1$  we have that
$ \Psi(\theta,ax) + (\partial_x \Psi) (\theta,ax) b x$
belongs to $V_1\subset K_2$ for $\|h\|$ sufficiently small. Now 
we can combine this fact with the uniform bound on $K_2$  and 
equation (\ref{equation analicity 3.1}) and (\ref{equation analicity 3.2}) to prove that  
\begin{equation}
\nonumber 
h(\theta+\omega,\Psi(\theta,(a+b)x)) = h(\theta+\omega,\Psi(\theta,ax)) + O(\|h\|^2). 
\end{equation}
Using this together with equations (\ref{equation analicity 1.2}) and 
(\ref{equation analicity 2.0}) we obtain
\begin{eqnarray}    
\label{equation analicity 4.2}
h(\theta+\omega, \Psi(\theta,(a+b)x) + h(\theta,(a+b)x)) &= & 
h(\theta+\omega,\Psi(\theta,a x)) +  O(\|  h\|^2) . 
\end{eqnarray}

With the same argument it follows that
\[
(\partial_x \Psi)(\theta+\omega,\Psi(\theta,(a+b)x)) = 
(\partial_x \Psi)(\theta+\omega,\Psi(\theta,ax)) + O(\|h\|) ,
 \]
and 
\begin{eqnarray}
\Psi(\theta+\omega,\Psi(\theta,(a+b)x)) &=& 
\Psi(\theta+\omega,\Psi(\theta,ax)) + \nonumber \\ 
& & (\partial_x\Psi) (\theta+\omega,\Psi(\theta,ax)) (\partial_x \Psi)(\theta,ax) bx \nonumber 
+ 
 O(\|h\|^2).
\end{eqnarray}
Replacing the last two equations in equation (\ref{equation analicity 1.1}) and using the 
bound given by (\ref{equation analicity 1.01}) yields to 
\begin{eqnarray}
\label{equation analicity 4.1}
\Psi(\theta+\omega, \Psi(\theta,(a+b)x) + h(\theta,(a+b)x)) &= & 
\Psi(\theta+\omega,\Psi(\theta,ax)) +
  \\ & &
(\partial_x \Psi) (\theta+\omega,\Psi(\theta,ax)) 
(\partial_x \Psi)(\theta,ax) bx \nonumber + 
  \\ & &
(\partial_x \Psi)(\theta+\omega,\Psi(\theta,ax))  h(\theta,(a+b)x)) 
+ O(\|h\|^2). \nonumber 
\end{eqnarray}

Finally, recall that $|b|= O(\|h\|)$, therefore
\[
\frac{1}{a+b}= \frac{1}{a} - \frac{b}{a^2} + O(\|h\|).
\]
When we replace this value and the ones of 
equations (\ref{equation analicity 4.2}) and (\ref{equation analicity 4.1}) 
in (\ref{equation analicity 0.0}) it follows that 
\[\left\|\TT_\omega(\Psi+h) - \TT_\omega(\Psi) - D\TT_\omega(\Psi)h \right\|= 
O\left(\|h\|^2\right),\]
which proves the differentiability of the operator in the analytic context. 
\end{proof}

\subsection{Fourier expansion of $D\TT_\omega(\Psi)$.} 
\label{section The Fourier expansion of DT} 

Let $\Psi$ be a function as in the hypothesis of theorem 
\ref{theorem renormalization analitic set up}, but additionally 
assume that $\Psi\in U\cap \DD_0(\TT_\omega)$. 
In this section we study $D\TT_\omega(\Psi)$, the 
differential of the quasi-periodic renormalization operator. Concretely, 
given $f\in \BB$ we study the Fourier expansion of $D\TT_\omega(\Psi) f$ 
in terms of the Fourier expansion of $ f$. 
It will turns out that, fixed a positive 
integer $k$, the spaces generated by functions of the type 
$f(x) \cos(2\pi k \theta) + g(x) \sin(2 \pi k \theta)$ (with $f$ and 
$g$ one dimensional real analytic functions) are invariant by  
$D\TT_\omega(\Psi)$. This allows us to reduce the study of 
$D\TT_\omega(\Psi)$ to a simpler operator $\LL_\omega$. We finish 
giving different spectral properties on the operator $\LL_\omega$. 
As usual the proofs have been moved to the end of the 
section.

Given a function $f\in \BB$ we can consider its complex Fourier 
expansion in the periodic variable 
\begin{equation}
\label{fourier expansion complex}
f(\theta,z)= \sum_{k\in \Z} c_k(z) e^{2\pi k\theta i }, 
\end{equation}
where 
\[
c_k(z)= \int_{0}^{1} f(\theta,z) e^{-2\pi k\theta i} d\theta.
\]
We can also consider its real Fourier expansion
\begin{equation}
\label{fourier expansion real}
f(\theta,z)= a_0(z) + \sum_{k>0} a_k(z) \cos(2\pi k\theta) +  b_k(z) \sin(2\pi k\theta).
\end{equation}
Here the coefficients are given as, 
\[
a_0(z)= \int_{0}^{1} f(\theta,z) d\theta, 
\]
\[
a_k(z)=  \frac{1}{2} \int_{0}^{1} f(\theta,z) \cos(2\pi k\theta)  d\theta,  \quad k> 0,
\]
\[
b_k(z)= \frac{1}{2} \int_{0}^{1} f(\theta,z) \sin(2\pi k\theta)  d\theta,  \quad k>0.
\]
We have the well known relation  
between both expansions, given by
$c_k(z) = \displaystyle \frac{a_k(z) + i b_k(z)}{2}$ when $k>0$ and 
$c_0(z) = a_0(z)$. 

Note that each function $c_k$ is holomorphic in $\W$
but not real holomorphic (the image of a real number will not be a real number
in general). On the other hand the real Fourier coefficient $a_k(z)$  
and $b_k(z)$ are real holomorphic in $\W$. 


Let $\psi=p_0(\Psi)$ be representative of $\Psi$ as 
a one dimensional map. Evaluating  (\ref{diferencial de TT}) 
on $c(z) e^{2\pi k \theta i}$ (for any $k\neq0$) we have 
\begin{equation}
\label{invariancia node}
[D\TT_\omega(\Psi)] \left(c_k(z) e^{2\pi k\theta i} \right)= 
\frac{1}{a} \left([ \psi'\circ\psi]( a z ) c_k(az) 
+  [c_k \circ \psi](a z) e^{2\pi k \omega i} \right) e^{2\pi k \theta i }. 
\end{equation}

On the other hand, when (\ref{diferencial de TT}) is 
 evaluated at $c_0(z)$ one has
\[[D\TT(\Psi)] \left(c_0(z) \right)= D\RR_\delta(\psi) c_0(z),\]
as should be expected since $c_0 = p_0(f)$.


Given $U$ an open subset of $\C$ we will denote by $\RHH(U)$ the set of
real holomorphic functions in $U$ and continuous in its closure. 

Consider the operators, 
\[
\begin{array}{rccc}
L_1: & \RHH(\W) & \rightarrow & \RHH(\W)  \\
  &   g(z)  & \mapsto & 
\displaystyle  \frac{1}{a} \psi'\circ\psi(a z) g(az),
\end{array} 
\]
and 
\[
\begin{array}{rccc}
L_2: & \RHH(\W) & \rightarrow & \RHH(\W)  \\   &   g(z)  & \mapsto &
\displaystyle  \frac{1}{a} g\circ\psi(a z),
\end{array}
\] with $\psi=p_0(\Psi)$ and 
$a=\psi(1)$. 


Given a function $f\in \BB$, we can consider its Fourier expansion 
(\ref{fourier expansion complex}) and apply (\ref{invariancia node}), hence
\begin{equation}
\label{equation Fourier expansion differential renormalization operator}
\left[D\TT_\omega(\Psi) f\right](\theta,z) = D\RR_\delta[c_0](z) 
+  \sum_{k\in \Z\setminus\{0\}} \left([L_1(c_k)](z) + 
[L_2(c_k)](z) e^{2\pi k\omega i}\right) e^{2\pi k\theta i }.
\end{equation}
Looking at this formula it can be said that $D\TT_\omega$ ``diagonalizes''
with respect to the complex Fourier base.

We define 
\[
U_k:= \big\{ f \in B |\text{ } f(\theta, x) = u(x) \cos(2\pi k \theta), \text{ for 
some } u\in \RHH(\W)\big\},
\]
and 
\[
V_k:= \big\{ f \in B |\text{ } f(\theta, x) = v(x) \sin(2\pi k \theta), \text{ for
some } v\in \RHH(\W)\big\}.
\]

On the other hand, given $\omega \in \T$, consider the following operator
\begin{equation}
\label{equation maps L_omega}
\begin{array}{rccc}
\LL_\omega: &  \RHH(\W)\oplus  \RHH(\W) & \rightarrow 
&  \RHH(\W)\oplus  \RHH(\W)  \\ \\
& \left( \begin{array}{c} u \\v \end{array} \right) &
\mapsto &  \left( \begin{array}{c}  L_1(u) \\ L_1(v)
\end{array} \right) +
\left( \begin{array}{cc}  \cos(2\pi \omega) & - \sin(2\pi \omega)  \\
\sin(2\pi \omega) & \cos( 2\pi \omega) 
\end{array} \right)  \left( \begin{array}{c}  L_2(u) \\
L_2(v) 
\end{array} \right) . 
\end{array}
\end{equation}

Then we have the following result.

\begin{prop}
\label{proposition invariance of Fourier node spaces} 
The spaces $U_k \oplus V_k$ are
invariant by $D\TT(\Psi)$ for any $k>0$. Moreover $D\TT_\omega (\Psi)$ restricted to $U_k \oplus V_k$ is 
conjugated to $\LL_{k\omega}$. 
\end{prop}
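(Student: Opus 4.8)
The plan is to make direct use of the ``diagonalization'' formula
(\ref{equation Fourier expansion differential renormalization operator}),
which already does most of the work, and simply translate it from the complex
Fourier basis to the real one. First I would fix $k>0$ and take an arbitrary
element of $U_k \oplus V_k$, namely $f(\theta,z) = u(z)\cos(2\pi k\theta) +
v(z)\sin(2\pi k\theta)$ with $u,v\in\RHH(\W)$. Using the identity
$c_k(z) = \tfrac12(a_k(z) + i\, b_k(z))$ together with $a_k = \tfrac12 u$,
$b_k = \tfrac12 v$ (taking care of the factors of $2$ in the conventions of
(\ref{fourier expansion real})), and $c_{-k} = \overline{c_k}$ since $f$ is
real, I would record that the only nonzero Fourier coefficients of $f$ are
$c_{\pm k}$. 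Then I apply (\ref{invariancia node}) to the $k$-th and $(-k)$-th
modes; note that on the $-k$ mode the multiplier is $e^{-2\pi k\omega i}$ and
$\psi$, $a$ are real, so $[D\TT_\omega(\Psi)f]$ still has only modes $\pm k$,
which already shows invariance of $U_k\oplus V_k$.

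For the conjugacy statement, the key computation is to extract the real and
imaginary parts. Since $\psi$ is real, the operators $L_1$ and $L_2$ map
$\RHH(\W)$ to $\RHH(\W)$, so applying (\ref{invariancia node}) to
$c_k = \tfrac12(u + i v)$ gives a new $k$-th coefficient
$\tfrac1a\bigl([\psi'\!\circ\psi](az)\,c_k(az) + [c_k\circ\psi](az)\,
e^{2\pi k\omega i}\bigr)
= \tfrac12\bigl(L_1(u) + i L_1(v)\bigr) + \tfrac12\bigl(L_2(u)+i L_2(v)\bigr)
e^{2\pi k\omega i}$. Multiplying out $e^{2\pi k\omega i} =
\cos(2\pi k\omega) + i\sin(2\pi k\omega)$ and collecting real and imaginary
parts reproduces exactly the two components of $\LL_{k\omega}$ applied to
$(u,v)^{\mathsf T}$, with the rotation matrix by angle $2\pi k\omega$ appearing
in front of $(L_2(u),L_2(v))^{\mathsf T}$. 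Reassembling
$[D\TT_\omega(\Psi)f](\theta,z)$ as (new $c_k$)$e^{2\pi k\theta i}$ + complex
conjugate and converting back to $\cos$/$\sin$ form shows that, under the
linear isomorphism $U_k\oplus V_k \to \RHH(\W)\oplus\RHH(\W)$ sending
$u\cos(2\pi k\theta) + v\sin(2\pi k\theta) \mapsto (u,v)$, the operator
$D\TT_\omega(\Psi)|_{U_k\oplus V_k}$ becomes precisely $\LL_{k\omega}$.

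I would also remark briefly that we may assume $\Psi\in\DD_0(\TT_\omega)$ (as
in the standing hypothesis of this subsection), so $\Psi$ does not depend on
$\theta$ and $p_0(\Psi) = \psi$ is genuinely a one-dimensional map; this is
what makes (\ref{diferencial de TT}) collapse to the form
(\ref{diferencial de TT 0}) and then to (\ref{invariancia node}) when evaluated
on a pure Fourier mode, so that no cross-terms between different $k$ arise. The
only mildly delicate point — and the place I would be most careful — is
bookkeeping the factors of $2$ and the $\pm k$ symmetry so that the rotation
angle comes out as $2\pi k\omega$ and not $2\pi\omega$, and so that reality of
the image is manifest; everything else is a routine substitution into formulas
already established. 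No genuine analytic obstacle remains, since well-definedness
and differentiability of $\TT_\omega$, hence of $D\TT_\omega(\Psi)$, were
already secured in Theorem \ref{theorem renormalization analitic set up}.
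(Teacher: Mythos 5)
Your proposal is correct and follows essentially the same route as the paper's proof: both pass from $u(z)\cos(2\pi k\theta)+v(z)\sin(2\pi k\theta)$ to the complex modes $\pm k$, apply formula (\ref{invariancia node}), use reality of $\psi$ and $a$ (so $L_1,L_2$ preserve $\RHH(\W)$ and the $-k$ mode is the conjugate of the $k$ mode), and then collect real and imaginary parts to read off the rotation by angle $2\pi k\omega$ acting on $(L_2(u),L_2(v))$, identifying the restriction with $\LL_{k\omega}$ via the obvious isomorphism $u\cos(2\pi k\theta)+v\sin(2\pi k\theta)\mapsto(u,v)$. The paper organizes the same computation by acting separately on $u(z)\cos(2\pi k\theta)$ and $v(z)\sin(2\pi k\theta)$, but this is only a bookkeeping difference.
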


Due to this proposition we have that the understanding of the 
derivative of the renormalization operator in $\BB$ is equivalent 
to the study of the operator $\LL_\omega$ for any $\omega\in \T$. 
Therefore we focus now on the study of  $\LL_\omega$. 

Given a value $\gamma \in \T$, consider the rotation $R_{\gamma}$ defined
as
\begin{equation}
\label{equation rotation rgamma}
\begin{array}{rccc}
R_\gamma: &  \RHH(\W)\oplus  \RHH(\W) & \rightarrow
&  \RHH(\W)\oplus  \RHH(\W)  \\ \\
& \left( \begin{array}{c} u \\v \end{array} \right) &
\mapsto &
\left( \begin{array}{cc}  \cos( 2\pi \gamma) & - \sin(2\pi \gamma)  \\
\sin(2\pi \gamma) & \cos(2\pi \gamma)
\end{array} \right)  \left( \begin{array}{c} u \\ v
\end{array} \right). 
\end{array}
\end{equation}
Then we have the following result.

\begin{prop} 
\label{proposition commutation of LLomega and Rgamma} 
For any $\omega, \gamma \in \T$ we have that $\LL_\omega$ and
$R_\gamma$ commute. 
\end{prop}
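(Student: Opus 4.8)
The plan is to use that $\LL_\omega$ is assembled from pieces each of which commutes with $R_\gamma$ for elementary reasons. Write $R(\alpha)$ for the constant rotation matrix $\left(\begin{smallmatrix}\cos 2\pi\alpha & -\sin 2\pi\alpha\\ \sin 2\pi\alpha & \cos 2\pi\alpha\end{smallmatrix}\right)$, and for a pair $w=(u,v)\in\RHH(\W)\oplus\RHH(\W)$ let $R(\alpha)\,w$ denote the result of multiplying $w$, viewed as a column vector, by this matrix; thus $R_\gamma\,w = R(\gamma)\,w$. Also set $\mathbf{L}_i\,w := (L_i u, L_i v)$ for $i=1,2$. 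With this notation equation (\ref{equation maps L_omega}) reads $\LL_\omega = \mathbf{L}_1 + R(\omega)\circ\mathbf{L}_2$.

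First I would check that $\mathbf{L}_1$ and $\mathbf{L}_2$ commute with $R_\gamma$. This is immediate from the linearity of $L_1$ and $L_2$ (visible from their definitions) together with the fact that the entries of $R(\gamma)$ are real scalars: writing $R(\gamma)=\left(\begin{smallmatrix}c_1 & -s_1\\ s_1 & c_1\end{smallmatrix}\right)$ one has, for $i=1,2$,
\[
\mathbf{L}_i\big(R_\gamma(u,v)\big) = \big(L_i(c_1 u - s_1 v),\ L_i(s_1 u + c_1 v)\big) = \big(c_1 L_i u - s_1 L_i v,\ s_1 L_i u + c_1 L_i v\big) = R_\gamma\big(\mathbf{L}_i(u,v)\big).
\]
Next I would note that the operators $w\mapsto R(\omega)\,w$ and $w\mapsto R(\gamma)\,w$ commute, since this is just the identity $R(\omega)R(\gamma)=R(\omega+\gamma)=R(\gamma)R(\omega)$ for rotation matrices, applied entrywise (again using that the matrix entries are scalars, so they pass freely through the coordinate splitting of the direct sum).

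Combining the two observations gives
\[
R_\gamma\circ\LL_\omega = R_\gamma\circ\mathbf{L}_1 + R_\gamma\circ R(\omega)\circ\mathbf{L}_2 = \mathbf{L}_1\circ R_\gamma + R(\omega)\circ R_\gamma\circ\mathbf{L}_2 = \mathbf{L}_1\circ R_\gamma + R(\omega)\circ\mathbf{L}_2\circ R_\gamma = \LL_\omega\circ R_\gamma,
\]
which is the claim. There is essentially no hard step here; the only thing requiring a little care is to keep the two occurrences of rotation matrices distinct — the one defining $R_\gamma$ and the one built into $\LL_\omega$ — and to use consistently that $L_1,L_2$ are linear and that the matrix entries are constants, so that everything passes through the linear operators and the coordinate splitting.
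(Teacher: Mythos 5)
Your proof is correct and follows essentially the same route as the paper, which simply notes that the claim follows from the linearity of $L_1$ and $L_2$ and the fact that any pair of rotations commute. Your write-up is just a more detailed expansion of that one-line argument, with the decomposition $\LL_\omega=\mathbf{L}_1+R(\omega)\circ\mathbf{L}_2$ making the two ingredients explicit.
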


This proposition has the following spectral consequences on $\LL_\omega$.  
\begin{cor}
\label{corollary rotational simetry eigenvectors} 
For any eigenvector $(u,v)$ of $\LL_\omega$ we have that $R_\gamma (u,v)$ 
is also an eigenvector of the same eigenvalue for any $\gamma \in \T$.
\end{cor}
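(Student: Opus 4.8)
The plan is to derive Corollary~\ref{corollary rotational simetry eigenvectors} directly from Proposition~\ref{proposition commutation of LLomega and Rgamma}, which is the standard observation that commuting operators preserve each other's eigenspaces. First I would fix $\omega \in \T$ and suppose $(u,v) \in \RHH(\W) \oplus \RHH(\W)$ is an eigenvector of $\LL_\omega$ with eigenvalue $\lambda$, so that $\LL_\omega(u,v) = \lambda (u,v)$. Since $(u,v)$ is an eigenvector it is in particular nonzero; applying $R_\gamma$ to both sides and using that $R_\gamma$ is linear gives $R_\gamma\bigl(\LL_\omega(u,v)\bigr) = \lambda\, R_\gamma(u,v)$.

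Now I would invoke Proposition~\ref{proposition commutation of LLomega and Rgamma}, which asserts $\LL_\omega \circ R_\gamma = R_\gamma \circ \LL_\omega$ for every $\gamma \in \T$. Substituting this into the left-hand side yields $\LL_\omega\bigl(R_\gamma(u,v)\bigr) = \lambda\, R_\gamma(u,v)$, so $R_\gamma(u,v)$ satisfies the eigenvalue equation for the same eigenvalue $\lambda$. The only remaining point is to check that $R_\gamma(u,v)$ is not the zero vector, so that it genuinely qualifies as an eigenvector: this is immediate because $R_\gamma$ is invertible (its inverse is $R_{-\gamma}$, as one sees from the rotation matrix in \eqref{equation rotation rgamma}, or equivalently $R_\gamma R_{-\gamma} = \mathrm{Id}$), so $R_\gamma$ maps nonzero vectors to nonzero vectors. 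This completes the argument.

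There is essentially no obstacle here; the corollary is a formal consequence of commutation, and all the content lives in the already-established Proposition~\ref{proposition commutation of LLomega and Rgamma}. The only things to be careful about are trivial: that $R_\gamma$ is a well-defined operator on $\RHH(\W)\oplus\RHH(\W)$ (clear, since it is just a constant-coefficient linear combination of real holomorphic functions, hence again real holomorphic), and that one does not accidentally need $\lambda \neq 0$ anywhere (one does not — the equation $\LL_\omega w = \lambda w$ with $w = R_\gamma(u,v) \neq 0$ is exactly the statement that $w$ is an eigenvector, regardless of whether $\lambda$ vanishes). If one wished, one could also phrase the whole thing as: $R_\gamma$ maps $\ker(\LL_\omega - \lambda\,\mathrm{Id})$ into itself because it commutes with $\LL_\omega - \lambda\,\mathrm{Id}$, and being invertible it maps this eigenspace \emph{onto} itself; I would likely include this one-line reformulation as a remark since it makes transparent that $R_\gamma$ acts as a linear automorphism of each eigenspace.
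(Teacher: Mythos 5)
Your argument is correct and is essentially the paper's own proof: apply $R_\gamma$ to both sides of $\LL_\omega(u,v)=\lambda(u,v)$ and invoke Proposition \ref{proposition commutation of LLomega and Rgamma} to swap $R_\gamma$ and $\LL_\omega$. The only addition is your (harmless, and slightly more careful) remark that $R_\gamma$ is invertible so $R_\gamma(u,v)\neq 0$, which the paper leaves implicit.
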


\begin{cor}
\label{corollary multiplicity eigenvalues} 
All the eigenvalues of $\LL_\omega$ (different from zero) are either real with 
geometric multiplicity even, or a pair of complex conjugate eigenvalues. 
\end{cor}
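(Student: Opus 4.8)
The plan is to combine Corollary \ref{corollary rotational simetry eigenvectors} with the observation that the rotations $R_\gamma$ form a one-parameter group acting on the (real) eigenspace of a fixed nonzero eigenvalue. First I would fix a nonzero eigenvalue $\lambda$ of $\LL_\omega$ and let $E_\lambda \subset \RHH(\W) \oplus \RHH(\W)$ denote the corresponding (generalized, or simply geometric — the statement only concerns geometric multiplicity) eigenspace. By Corollary \ref{corollary rotational simetry eigenvectors}, $E_\lambda$ is invariant under every $R_\gamma$, $\gamma \in \T$. So we have a representation of the circle group $\T$ on the vector space $E_\lambda$ given by $\gamma \mapsto R_\gamma|_{E_\lambda}$; note this is the restriction of a genuine rotation (an orthogonal-type action mixing the two $\RHH(\W)$ factors), and in particular $R_{1/2} = -\mathrm{Id}$ on all of $\RHH(\W)\oplus\RHH(\W)$, hence on $E_\lambda$.

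The key step is to split into two cases according to whether $\lambda$ is real. If $\lambda \in \R$, then $E_\lambda$ is a real subspace of the real vector space $\RHH(\W)\oplus\RHH(\W)$, carrying a nontrivial action of $\T$ by the $R_\gamma$. I would argue that such a real representation of $\T$ on a finite-dimensional space (restricting to generalized eigenspaces of finite multiplicity is harmless, as one expects $\LL_\omega$ to be compact; in any case one works within the span of a basis of eigenvectors) decomposes into a direct sum of two-dimensional irreducible pieces — the standard rotation blocks $\gamma \mapsto \begin{pmatrix}\cos 2\pi n\gamma & -\sin 2\pi n\gamma\\ \sin 2\pi n\gamma & \cos 2\pi n\gamma\end{pmatrix}$ for various $n\geq 1$ — plus possibly a trivial part on which every $R_\gamma$ acts as the identity; but the trivial part is excluded because $R_{1/2} = -\mathrm{Id}$ has no nonzero fixed vector. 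Each two-dimensional block contributes $2$ to the real dimension, so $\dim_\R E_\lambda$ is even, which is the geometric multiplicity of the real eigenvalue $\lambda$. If instead $\lambda \notin \R$, then since $\LL_\omega$ is a real operator (it maps the real vector space $\RHH(\W)\oplus\RHH(\W)$ to itself) its complexification has $\bar\lambda$ as an eigenvalue with the same multiplicity, obtained by applying complex conjugation to eigenvectors; this gives the "pair of complex conjugate eigenvalues" alternative, and the corollary follows.

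The main obstacle I anticipate is making the representation-theoretic step rigorous in the infinite-dimensional Banach-space setting: one must justify that the eigenspace is finite-dimensional (or at least that every eigenvector lies in a finite-dimensional $R_\gamma$-invariant subspace on which the classification of $\T$-representations applies), and that the $R_\gamma$-action on $E_\lambda$ is strongly continuous so that the decomposition into rotation blocks is legitimate. Strong continuity is immediate from the explicit matrix form of $R_\gamma$ in \eqref{equation rotation rgamma}. Finite-dimensionality is most cleanly obtained if $\LL_\omega$ is compact — which should follow because $L_1$ and $L_2$ involve post-composition with $\psi$ and a rescaling $z \mapsto az$ with $|a|<1$, mapping $\W$ with closure into a compact subset of $\W$, so that the relevant Cauchy-estimate bounds give compactness of the operators on $\RHH(\W)$ by Montel's theorem; alternatively, if one only wants the geometric statement, one can restrict attention to the finite-dimensional span of any finite set of eigenvectors sharing the eigenvalue $\lambda$ and run the argument there. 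Once finite-dimensionality is in hand, the rest is the elementary structure theory of orthogonal/unitary representations of the circle.
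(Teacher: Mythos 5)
Your proposal is correct and rests on essentially the same ingredients as the paper's proof: corollary \ref{corollary rotational simetry eigenvectors} (invariance of each eigenspace under the rotations $R_\gamma$) for the real case, and the reality of $\LL_\omega$ for the complex-conjugate pairing. The only difference is cosmetic: where the paper argues evenness by an informal basis-exchange pairing of eigenvectors with their rotates, you obtain it from the decomposition of the circle action into two-dimensional rotation blocks (equivalently, from the complex structure $R_{1/4}$, which satisfies $R_{1/4}^2=-\mathrm{Id}$ and leaves the eigenspace invariant), which is a cleaner formalization of the same idea; your remarks on finite multiplicity via compactness of $\LL_\omega$ match what the paper establishes separately in proposition \ref{proposition L_omega is compact}.
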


On the other hand we have the following result on the dependence 
of the operator with respect to $\omega$

\begin{prop} 
\label{analytically denpendence Lomega}
The operator $\LL_\omega$ depends analytically 
on $\omega$.
\end{prop}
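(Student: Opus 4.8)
The plan is to exhibit $\LL_\omega$ explicitly as a power series in an appropriate parameter and invoke the standard characterization of analytic maps between Banach spaces (a map is analytic iff it is locally given by a convergent power series, or equivalently iff it is Gateaux-holomorphic and locally bounded). First I would fix the Banach space $E = \RHH(\W)\oplus\RHH(\W)$ with the supremum norm, and recall from \eqref{equation maps L_omega} that $\LL_\omega$ splits as
\[
\LL_\omega = \mathrm{diag}(L_1, L_1) + R_\omega \circ \mathrm{diag}(L_2, L_2),
\]
where $L_1, L_2$ are bounded linear operators on $\RHH(\W)$ that do not depend on $\omega$ at all (they depend only on $\psi = p_0(\Psi)$ and $a = \psi(1)$), and $R_\omega$ is the rotation matrix from \eqref{equation rotation rgamma} acting on $E$. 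Hence the entire $\omega$-dependence of $\LL_\omega$ is carried by the $2\times 2$ block
\[
R_\omega = \begin{pmatrix} \cos(2\pi\omega) & -\sin(2\pi\omega) \\ \sin(2\pi\omega) & \cos(2\pi\omega) \end{pmatrix},
\]
composed with a fixed bounded operator.

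Next I would observe that $\omega \mapsto R_\omega \in \mathcal{L}(E)$ is real-analytic: writing $z = e^{2\pi i \omega}$, the entries $\cos(2\pi\omega) = \tfrac12(z + z^{-1})$ and $\sin(2\pi\omega) = \tfrac{1}{2i}(z - z^{-1})$ extend to holomorphic functions of $\omega$ in a strip $|\mathrm{Im}\,\omega| < r$ for any $r>0$, and the resulting operator-valued function $R_\omega$ is holomorphic there because it is a finite linear combination (with scalar holomorphic coefficients) of the four constant rank-one operators corresponding to the matrix entries. Equivalently, $R_\omega$ is given near any $\omega_0$ by the norm-convergent Taylor series $R_\omega = \sum_{n\geq 0} \frac{(2\pi)^n (\omega - \omega_0)^n}{n!} R_{\omega_0} J^n$, where $J = \left(\begin{smallmatrix} 0 & -1 \\ 1 & 0 \end{smallmatrix}\right)$ acts blockwise on $E$; this series converges in operator norm for all $\omega$ since $\|R_{\omega_0} J^n\|$ is bounded uniformly in $n$. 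Composition on the left with the fixed bounded operator $\mathrm{diag}(L_2, L_2)$ and addition of the fixed operator $\mathrm{diag}(L_1, L_1)$ are continuous linear (hence analytic) operations on $\mathcal{L}(E)$, so $\omega \mapsto \LL_\omega$ is a composition of analytic maps and is therefore analytic, with a globally convergent expansion inherited from that of $R_\omega$.

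The only point requiring a small amount of care — and the closest thing to an obstacle — is making precise in what sense "analytic dependence" is meant: since $\LL_\omega$ is itself a linear operator, the natural statement is that $\omega \mapsto \LL_\omega$ is a real-analytic (indeed entire, after complexifying $\omega$ into a strip) map from $\T$ into the Banach space $\mathcal{L}(E)$. One should check that $L_1$ and $L_2$ are genuinely bounded on $\RHH(\W)$ with the supremum norm; this follows from the Cauchy estimates already established in the proof of Theorem \ref{theorem renormalization analitic set up} (the bound $|\partial_x f(\theta,x_0)| \leq \tfrac{1}{r}\|f\|_\infty$ on a compact subset bounded away from $\partial\W$), together with hypothesis \textbf{H0}, which guarantees that $\psi(a\W) \subset \W$ so that the composition $g \mapsto g\circ\psi(a\,\cdot)$ maps $\RHH(\W)$ into itself continuously. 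Once boundedness of $L_1, L_2$ is in hand, everything else is the elementary observation above that a matrix of trigonometric polynomials is an entire operator-valued function, and the proposition follows.
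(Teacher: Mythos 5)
Your proposal is correct and follows essentially the same route as the paper: the paper's proof is precisely the observation that $\LL_\omega$ is the sum of a bounded operator independent of $\omega$ and another fixed bounded operator multiplied by an entire (trigonometric) function of $\omega$. Your version simply fleshes out the details (the explicit Taylor expansion of the rotation block and the boundedness of $L_1$, $L_2$ via \textbf{H0} and Cauchy estimates), which the paper leaves implicit.
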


This result allows us to apply theorems 
III-6.17 and VII-1.7 of \cite{Kat66}. These results imply that, as long as 
the eigenvalues of $\LL_\omega$ do not cross each other, then the eigenvalues 
and their associated eigenspaces depend analytically on the parameter $\omega$. 

We want to prove the compactness of $\LL_\omega$ as an operator. 
For technical reasons this can not be proved with $\LL_\omega$ as an 
operator on $\RHH(\W)\oplus\RHH(\W)$, but it can be 
proved on a closed subspace of $\RHH(\W)\oplus\RHH(\W)$.

\begin{prop}
\label{proposition L_omega is compact}
Consider $K\subset \W$ a compact set in the complex plane, such that 
$\psi(a\W)\subset K$ and $a\W \subset K$
where $\psi= p_0 \left( \Psi\right)$. Let us denote by 
$B= \RHH(\W)\cap C^0(K,\C)$, which is a 
Banach subspace of $\RHH(\W)$. 

Then the operator $\LL_\omega$ restricted to the  subspace
$B\oplus B \subset \RHH(\W)\oplus\RHH(\W)$ 
is well defined (i.e. $\LL_\omega:B\oplus B\rightarrow B\oplus B$) 
and it is compact. 
\end{prop}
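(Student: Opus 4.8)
The plan is to show that $\LL_\omega$, restricted to $B\oplus B$, is a limit (in operator norm) of finite-rank operators, or alternatively that it maps bounded sets to precompact sets, using that holomorphic functions with a uniform bound on $\W$ form a normal family on compact subsets. The key structural observation is that both $L_1$ and $L_2$ involve a \emph{composition with an inner map whose image lies in a fixed compact set} $K\subsetneq\W$: indeed $L_1(g)(z)=\tfrac1a(\psi'\circ\psi)(az)\,g(az)$ samples $g$ only on $a\W\subset K$, and $L_2(g)(z)=\tfrac1a (g\circ\psi)(az)$ samples $g$ only on $\psi(a\W)\subset K$. Since $K$ is compactly contained in $\W$, evaluation of a holomorphic function on $K$ is a "smoothing" operation: by Cauchy estimates, a bound on $\|g\|$ over $\W$ controls all derivatives of $g$ on $K$, hence gives equicontinuity there.

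First I would check that $\LL_\omega$ is well defined on $B\oplus B$, i.e. that $L_1(g)$ and $L_2(g)$ lie in $B=\RHH(\W)\cap C^0(K,\C)$ whenever $g\in B$. Real-analyticity and holomorphy on $\W$ are inherited from $\psi$ and $g$ (here one uses $\psi=p_0(\Phi)$ real holomorphic and the multiplier $\tfrac1a(\psi'\circ\psi)$ holomorphic on $\W$, which follows from {\bf H0} since $\psi(a\W)\subset\W$). Continuity on $K$ and boundedness follow because the inner maps send $K$ into $\W$ (indeed into $K$). The rotation matrix mixing $L_2(u),L_2(v)$ is just a bounded linear automorphism, so it does not affect well-definedness or compactness.

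Next, for compactness I would take a bounded sequence $(g_n)$ in $B$, say $\|g_n\|\le M$, and show $(L_i(g_n))$ has a convergent subsequence in $B$. By Montel's theorem the $g_n$, being holomorphic on $\W$ with $\sup_\W|g_n|\le M$, form a normal family, so after passing to a subsequence $g_n\to g$ uniformly on every compact subset of $\W$, in particular uniformly on a compact neighbourhood $K'$ of $K$ with $K\subset \operatorname{int}K'\subset\W$. Then $L_2(g_n)(z)=\tfrac1a g_n(\psi(az))\to \tfrac1a g(\psi(az))$ uniformly for $z\in a^{-1}$-preimage data, i.e. uniformly on $\W$ up to the evaluation, because $\psi(a\W)\subset K\subset K'$; likewise $L_1(g_n)\to L_1(g)$ uniformly on $\W$, using in addition the uniform bound $|\tfrac1a(\psi'\circ\psi)(az)|$ on $\W$. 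Since convergence is uniform on all of $\W$ (the image of the inner map being trapped in $K'$), the limit lies in $\AAA$, is real-analytic, and restricts continuously to $K$; hence the convergence is in the norm of $B$. Therefore $L_1,L_2:B\to B$ are compact, and so is $\LL_\omega=\operatorname{diag}(L_1,L_1)+R_\omega\operatorname{diag}(L_2,L_2)$ on $B\oplus B$ as a finite combination of compositions of compact operators with bounded ones.

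The main obstacle is the bookkeeping needed to convert "uniform convergence on compact subsets of $\W$" into "convergence in the norm $\|\cdot\|$ of $B$", which is a sup norm over $\W$ (or its closure), not over $K$. The point to get right is that the \emph{output} functions $L_i(g_n)$ are defined on all of $\W$ but only ever \emph{read} $g_n$ on the compact set $K$; so their sup over $\W$ equals a sup of quantities depending on $g_n|_K$ times fixed bounded multipliers, and hence uniform control of $g_n$ on a slightly larger compact $K'\supset K$ suffices. This is exactly where the hypothesis $\psi(a\W)\subset K$ and $a\W\subset K$ in the statement is used, and why the proposition is stated for $B\oplus B$ rather than for $\RHH(\W)\oplus\RHH(\W)$: without trapping the inner maps' images in a fixed compact proper subset of $\W$, one cannot upgrade normal-family convergence to norm convergence.
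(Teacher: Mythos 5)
Your proposal is correct and follows essentially the same route as the paper: reduce $\LL_\omega$ to the scalar operators $L_1$ and $L_2$, use the trapping hypotheses $a\W\subset K$ and $\psi(a\W)\subset K$ for well-definedness, obtain compactness from Montel/normal families together with the fact that the operators only read the input on the fixed compact $K$, and treat the rotation as a bounded operator. The only cosmetic difference is that you apply Montel to a bounded input sequence and push the convergence through the operators (spelling out the upgrade from convergence on compact subsets to $B$-norm convergence), whereas the paper applies the normal-family criterion (proposition 9.13.1 of \cite{Die69}) directly to the image $L_i(U)$ of the unit ball.
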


Recall that the compacity of an operator implies that its 
spectrum is either finite or countable with $0$ on its 
closure (see for instance theorem III-6.26 of \cite{Kat66}). 

\begin{figure}[t]
\begin{center}
\includegraphics[width=10cm]{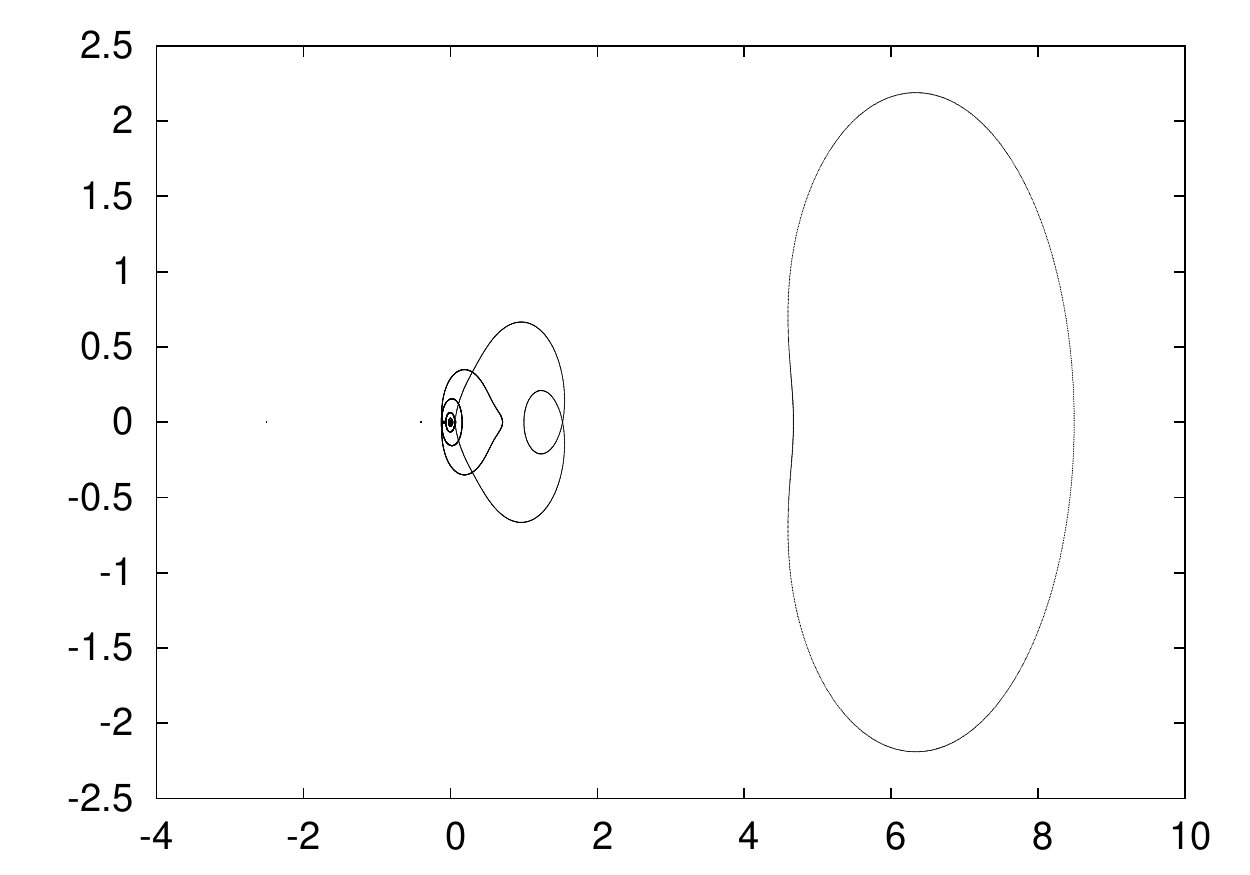}
\includegraphics[width=7.5cm]{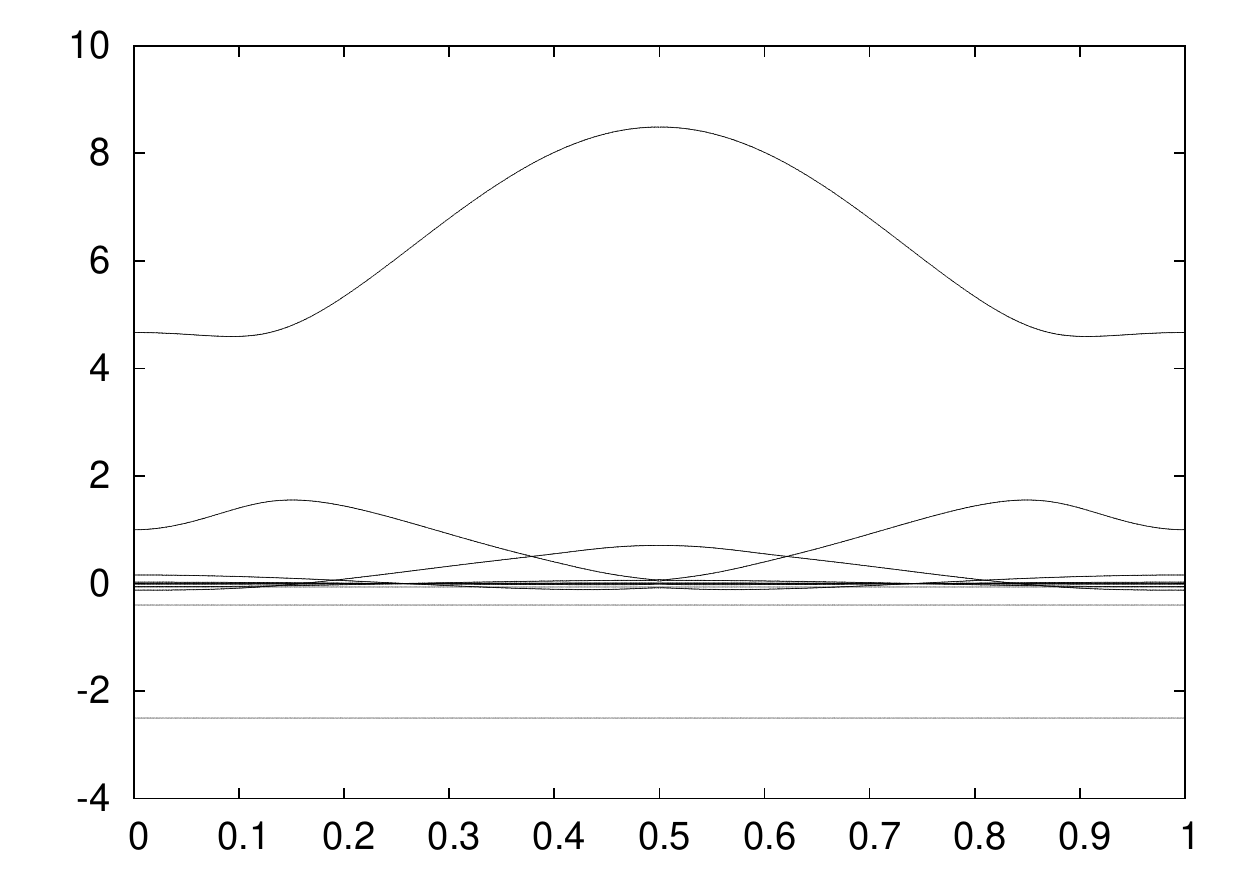}
\includegraphics[width=7.5cm]{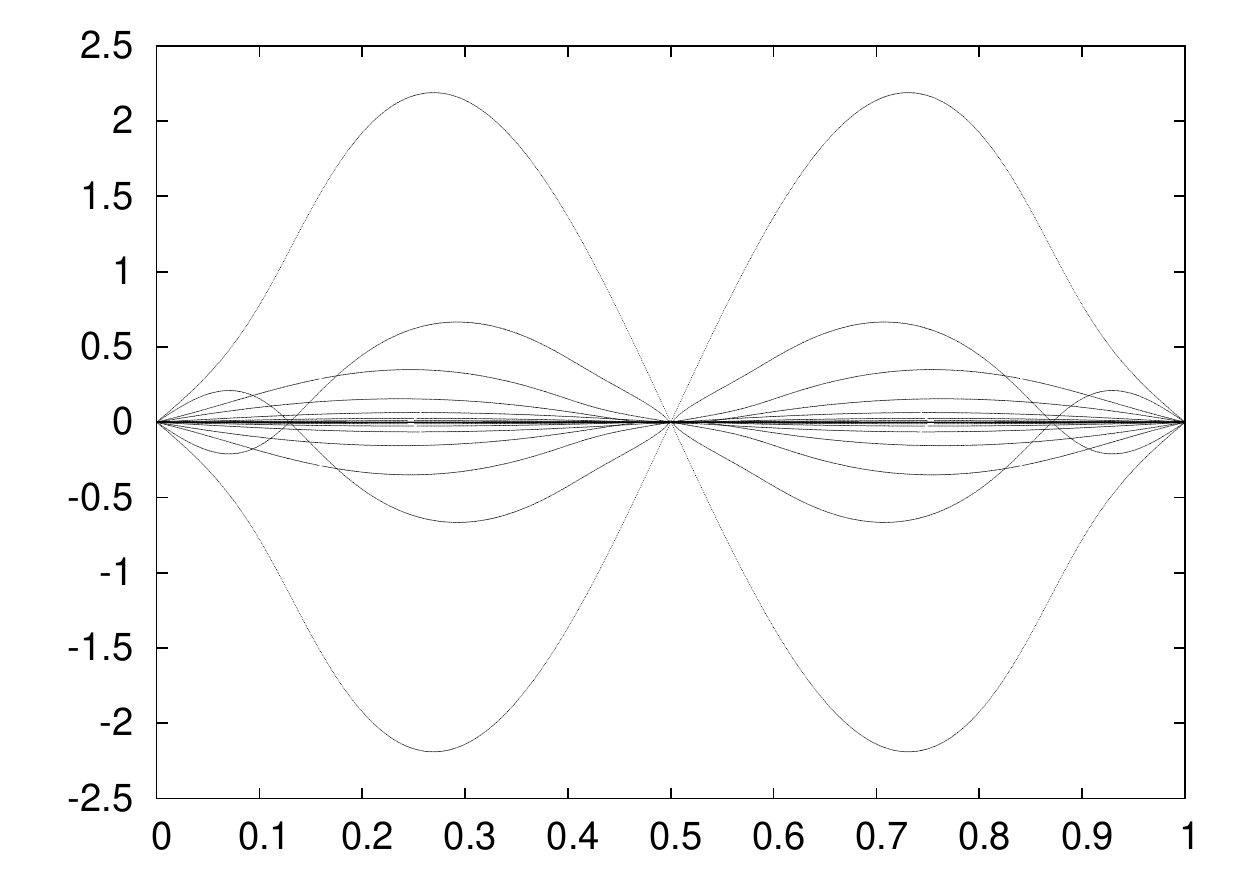}
\caption{ Numerical approximation of the
spectrum of $\LL_\omega$ for $\omega \in \T$.
On the top we have the projection in the complex plane of the spectrum when
$\omega$ varies in $\T$. In the bottom we have the
evolution of the real (left) and the imaginary (right)
part of the spectrum with respect to $\omega$.
}
\label{Operator Spectrum}
\end{center}
\end{figure}

To finish this section we have included in figure 
\ref{Operator Spectrum} a numerical approximation of 
the spectrum of the operator $\LL_\omega$ depending on $\omega$. 
In the figure it can be observed that the different properties
(and their spectral consequences) on the operator stated above are satisfied. 
The details on the numerical computations involved to approximate 
the spectrum are described in \cite{JRT11c}. Several numerical tests on 
the reliability of the results are also included there.

\subsubsection{Proofs}

\begin{proof}[Proof of proposition 
\ref{proposition invariance of Fourier node spaces}]
Let $f$ be a function in $U_k \oplus V_k$, then we have 
$f(\theta,x) = u(z) \cos(2\pi k \theta) + v(z) \sin (2\pi k\theta)$. Consider the 
function $c(z) = \frac{u(z) + i v(z)}{2}$. Using formula (\ref{invariancia node})
on the function $u(z) = c(z) + \bar{c}(z)$ and doing some algebra one can see that 
\[
\begin{array}{rcl}
[D\TT_\omega(\Psi)](u(z) \cos(2\pi k \theta)) &  = & \phantom{-}[L_1(u)](z)\cos(2\pi k \theta) + 
 [L_2(u)](z) \cos(2\pi k \omega) \cos(2\pi k \theta) 
\\ \rule{0pt}{3ex} & &  - [L_2(v)](z)  \sin(2\pi k \omega)  \sin(2\pi k \theta), 
\end{array} 
\]
and, doing a similar calculation for $v(z) = i (\overline{c(z)} - c(z))$
\[
\begin{array}{rcl}
[D\TT_\omega(\Psi)](v(z) \sin(2\pi k \theta))  &  = 
& \phantom{+} [L_1(v)](z)\sin(2\pi k \theta)  + [L_2(u)](z)\sin(2\pi k \omega) \cos(2 \pi k \theta)  
\\ \rule{0pt}{3ex} & &  + [L_2(v)](z) \cos(2\pi k \omega) \sin(2\pi k \theta). 
\end{array}
\]

Now notice that there is a natural isomorphism between $U_k$ and 
$ \RHH(\W)$ given by the function 
$p_c: \RHH(\W) \rightarrow U_k $ defined as 
$p_c(f)(x):= \int_0^{1} f(\theta,x) \cos(2\pi k\theta) d\theta $ and 
the function $i_c:  U_k \rightarrow \RHH(\W)$ defined as 
$i_c(f)(\theta,x) = f(x) \cos(2\pi k\theta)$. The same argument can 
be applied to $V_k$ considering the functions 
$p_s:V_k \rightarrow \RHH(\W)$  and $i_s:\RHH(\W) \rightarrow V_k $ 
defined as before but with $\sin(2\pi k \theta)$ instead of 
$\cos(2\pi k \theta)$. 
Then these functions can be used to define the isomorphic conjugacy
between $D\TT_\omega(\Psi)$ and $\LL_{k\omega}$. 
\end{proof}
\begin{proof}[Proof of proposition
\ref{proposition commutation of LLomega and Rgamma}]
It follows from $L_1$ and $L_2$ being linear and the fact that any pair of
rotations commute.
\end{proof}

\begin{proof}[Proof of corollary 
\ref{corollary rotational simetry eigenvectors}]
Suppose that $(u,v)$ is an eigenvector of eigenvalue $\lambda$, we have 
$\lambda (u,v) = \LL_\omega (u,v)$. Composing in both parts by $R_\gamma$ and 
using the last proposition the result follows. 
\end{proof}

\begin{proof}[Proof of corollary
\ref{corollary multiplicity eigenvalues}]
In the case of a real eigenvalue, suppose it has geometric multiplicity odd, then
its eigenspace is generated by $n$ vectors $y_1, y_2, \dots, y_n$, with $n$ odd. 
We can consider $R_\gamma y_i$ for any $i$, which will also be in the eigenspace of the eigenvalue. 
Since the vector $R_\gamma y_i$ is linearly independent with $y_i$ but it 
is in the eigenspace, we have that it is generated by the other eigenvectors. 
Then one of the original vectors can be replaced by $R_\gamma y_i$. Rearranging the 
vectors if necessary we can suppose that $y_2=R_\gamma y_1$. Doing this 
process repeatedly we will end up with an even number of vectors.

In the case of a complex eigenvalue, using that the operator $\LL_\omega$ 
is real, if it has a complex eigenvalue $\lambda$ with 
eigenvector $v_r + i v_i$, then $\bar{\lambda}$ will also be an eigenvalue with 
eigenvector  $v_r - i v_i$. Given a complex pair of conjugate eigenvalues, the 
restriction of the operator to the corresponding eigenspace can be written as
a uniform scaling composed with a rotation. It can happen that the space 
generated by these two vectors is invariant by the rotation
$R_\gamma$ introduced before. Then the 
multiplicity of the eigenvalue can be odd. Actually, if the pair of complex
eigenvalues are simple, then there exists a $\gamma_0 \in \T$ such that the 
rotation associated to the pair of eigenvectors is $R_{\gamma_0}$. 
\end{proof}

\begin{proof}[Proof of proposition
\ref{analytically denpendence Lomega}]
It follows from the fact that $\LL_\omega$ is the
sum of two bounded linear operators (which do not depend on $\omega$)
times an entire function on $\omega$.
\end{proof}

\begin{proof}[Proof of proposition
\ref{proposition L_omega is compact}]
Note that it is enough to prove that the operators 
$L_1$ and $L_2$ are well defined and compact.

Given a map in $g\in B=\RHH(\W)\cap C^0(K_1,\C)$, consider 
$\left[L_1(g)\right](z)= \frac{1}{a} \psi'\circ\psi(a z) g(az)$. 
Since $\Cl\left(a\W\right)\subset K$ then the map 
$g(a\cdot)$ is in $\RHH(\W)\cap C^0(K,\C)$. 
Therefore, the map $L_1(g)$ belongs to $B$ which means that 
$L_1:B\rightarrow B$ is well defined. On the other hand we 
have that for any $g\in B$, $L_2(g)$ is defined as
$\left[L_2(g)\right](z) = \frac{1}{a} g\circ \psi(az)$. As 
the set $K$ has been considered such that 
$ \psi(a \W)\subset K$, $L_2$ is also well defined.

Consider $U$ the unit ball of $B$. 
Since $B$ is a
Banach space, to prove that $L_i$ is compact it is enough to prove that
 $L_i(U)$ is relatively compact (for $i=1,2$). 
This follows easily using proposition 9.13.1 of \cite{Die69}. 
For each compact set 
in $K'$ in $\W$ we have that the maps $L_i(U)$ are bounded,
then it follows that $L_i(U)$ is relatively compact in 
$C^0(K',\C)$. Concretely, we can take $K'=K$ and 
 we have that $L_i(U)\subset\RHH(\W)$ 
is relatively compact in $C^0(K,\C)$, therefore it is 
relatively compact in $B$. 
\end{proof}


\section{Reducibility loss and quasi-periodic renormalization}
\label{chapter application} 

In this section we use the renormalization operator 
to study the reducibility loss bifurcations of 
a two parametric family of q.p. forced map. 
Concretely, the main result of this section is a proof 
of the existence of reducibility loss bifurcations for a  two-parametric 
family of  q.p. forced map satisfying  
suitable conditions, see theorem \ref{thm existence of 
non-reducibility direction}. In particular this 
theorem applies to the case of the Forced Logistic 
Map considered in \cite{JRT11p}, but this will be 
discussed in \cite{JRT11c}.
The proof is not complete, in the sense 
that we need to assume the injectiveness of the renormalization operator 
$\TT_\omega$ (see conjecture {\bf \ref{conjecture H2}}). 

In section \ref{section boundaris of reducibility} we consider 
certain sets $\Upsilon^+_n(\omega)$ and $\Upsilon^-_n(\omega)$
of codimension one in the space $\BB$, which correspond to 
the reducibility loss of the attracting $2^n$-periodic orbit. 
We show that the intersection of these sets with the subset of uncoupled 
maps corresponds to the sets $\Sigma_n$ of maps (in the one dimensional 
case) such that its attracting $2^n$ periodic orbit is super-attracting. 
The main result of this section relates the sets $\Upsilon^+_n(\omega)$ 
(respectively $\Upsilon^-_n(\omega)$)
for different values of $n$ through the renormalization operator $\TT_\omega$. 

In section \ref{Section consequences for a two parametric family of maps} 
we consider a generic two parametric family of maps satisfying
certain hypotheses. We intersect the family with the previous 
$\Upsilon^+_n(\omega)$ and $\Upsilon^-_n(\omega)$. 
Differently to the one dimensional analogous, the intersection 
gives a one dimensional curve 
in the family, which corresponds to a 
reducibility loss bifurcation. Using the (quasi-periodic) 
renormalization operator and the results in section 
\ref{section boundaris of reducibility}  we prove 
the existence of reducibility loss bifurcation 
curves on the parameter space of the family considered. 
In theorem \ref{thm existence of 
non-reducibility direction} we show that, given 
a two parametric family which uncouples, there exist two
reducibility loss bifurcation curves  around the points 
such that the uncoupled map has a super-attracting periodic orbit. 
This fact was observed numerically in \cite{JRT11p}. 

We also discus the weakening of the hypothesis of 
theorem \ref{thm existence of non-reducibility direction} 
and we give explicit expressions of the slopes of the  reducibility loss bifurcations 
in terms of the family of maps and its iterates by the 
(quasi-periodic) renormalization operator.

As in previous sections the proofs have been moved to the 
end of each subsection. 
\subsection{Boundaries of reducibility}
\label{section boundaris of reducibility}

In this section we  work in the analytic framework, 
concretely we consider maps belonging to $\BB$ the space 
of the q.p forced one dimensional unimodal maps (as defined in 
subsection  \ref{subsection study of TT}). Again, 
let us consider the splitting  $\BB=\BB_0\oplus \BB_{0}^{c}$ 
given by the projection $p_0[f](x):= \int_{0}^{1} 
f(\theta,x)d\theta$, in other words the spaces given by  $\BB_0=p_0(\BB)$ and 
$\BB_0^c = (\mathrm{Id} - p_0)(\BB)$, where $\mathrm{Id}$ is the 
identity map. The renormalization operator for 
q.p. forced maps is  denoted by $\TT_{\omega}$, the 
renormalization operator for one dimensional maps by $\RR$ and 
the fixed point by $\Phi$ independently of the operator (recall that the 
fixed points of $\RR$ extend automatically to fixed points of $\TT_\omega$). Given a
map $F$ like (\ref{q.p. forced system interval}) with $f\in\BB$ and $\omega\in\T$ we 
denote by $f^{n}:\T \times \R \rightarrow \R$ the $x$-projection
of $F^n(x,\theta)$. Equivalently, $f^n$ can be defined through the recurrence
\begin{equation}
\label{definicio falan}
f^n(\theta, x) = f( \theta +(n-1) \omega, f^{n-1}(\theta,x)). 
\end{equation}

In this subsection, differently to the previous one, 
whenever  $\omega$ is used, it is assumed to 
be Diophantine. 
Let us denote by $\Omega$ 
the set of Diophantine numbers, 
this is $\Omega= \Omega_{\gamma,\tau}$ the set of $\omega\in \T$
such that there exists $\gamma >0$ and $\tau \geq 1$
such that
\[
|q w - p| \geq \frac{\gamma}{|q|^{\tau}}, \quad 
\text{ for all } (p,q) \in \Z \times (\Z \setminus \{0\}). 
\]

Additionally, we assume that the following conjecture is true.  

\begin{conj}
\label{conjecture H2} 
The operator $\TT_{\omega}$ (for any $\omega\in\Omega$) is an injective 
function when restricted to the domain $\BB\cap \DD(\TT)$. Moreover, 
there exist $U$ an open set of $\DD(\TT)$ containing $W^u(\Phi,\RR)
\cup W^s(\Phi,\RR)$\footnote{Here $W^s(\Phi,\RR)$ and $W^u(\Phi,\RR)$ 
are considered as the inclusion in $\BB$ of the 
stable and the unstable manifolds of the fixed point $\Phi$ (given by 
{\bf H0}) by the map $\RR$ in the topology of $\BB_0$ (the inclusion 
of one parametric maps in $\BB$).}
where the operator $\TT_\omega$ is differentiable.
\end{conj}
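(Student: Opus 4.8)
Since this statement is framed as a conjecture, I will describe the strategy one would naturally follow rather than a complete argument; the point where the strategy stalls is essentially the reason it is left as an assumption. The statement has two parts --- global injectivity of $\TT_\omega$ on $\BB\cap\DD(\TT)$, and the existence of a neighbourhood of $W^u(\Phi,\RR)\cup W^s(\Phi,\RR)$ on which $\TT_\omega$ is differentiable --- and I would treat them separately, starting from the local picture around $\Phi$, which is already under control by Theorem \ref{theorem renormalization analitic set up}.

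For local injectivity near $\Phi$ the plan is to exploit the ``un-renormalization'' implicit in the definition (\ref{operator tau}). Suppose $\TT_\omega(g_1)=\TT_\omega(g_2)=G$ with $g_1,g_2$ close to $\Phi$ in $\BB$, and write $\hat a_i=\int_0^1 g_i(\theta,1)\,d\theta$. One first wants to force $\hat a_1=\hat a_2$, using that $p_0(g_i)$ is close to the one-dimensional fixed point, that $G\in\XX$ so $p_0(G)(0)=1$, and the values of $G$ along $x=0$; once $\hat a_1=\hat a_2=:\hat a$, the functional identity $\hat a\,G(\theta,x)=g_i(\theta+\omega,g_i(\theta,\hat a x))$ should, after disentangling the composition via the near-fixed-point (monotonicity on half-intervals, near-identity) structure of the $g_i$, give $g_1=g_2$ on an open subset of $\B_\rho\times\W$, and holomorphy of the elements of $\BB$ then propagates the equality to the whole domain. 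A softer but only partial alternative works with the derivative: by (\ref{equation Fourier expansion differential renormalization operator}), $D\TT_\omega(\Phi)$ is diagonal in the complex Fourier modes, acting by $D\RR_\delta(\Phi)$ on the zeroth mode and, by Proposition \ref{proposition invariance of Fourier node spaces}, conjugately to $\LL_{k\omega}$ on the $k$-th pair of modes; if $0$ were not in the spectrum of $\LL_\omega$ for any $\omega$ (which figure \ref{Operator Spectrum} supports) and $D\RR_\delta(\Phi)$ were injective, then $D\TT_\omega(\Phi)$ would be injective --- but since $\LL_\omega$ is compact (Proposition \ref{proposition L_omega is compact}) this does not bound $D\TT_\omega(\Phi)$ below, so an inverse function theorem does not apply and local injectivity of the nonlinear map still has to come from the un-renormalization argument.

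Globalizing both statements is where the real difficulty lies. For injectivity on all of $\BB\cap\DD(\TT)$ one cannot simply iterate the local argument: away from the uncoupled subspace $\XX_0$ the convenient identity $p_0\circ\TT_\omega=\RR_\delta\circ p_0$ no longer holds (averaging does not commute with the self-composition in (\ref{operator tau})), so one loses the handle on $\hat a$ and on the coarse structure of the image, and one must instead control the geometry of the sets $g(\theta,\hat a\W)$ uniformly over the whole domain. For the differentiability part, Theorem \ref{theorem renormalization analitic set up} only provides a neighbourhood of $\Phi$; to cover a neighbourhood of the global manifolds $W^u(\Phi,\RR)$ and $W^s(\Phi,\RR)$ one would run a continuation argument along them, re-verifying at each point the {\bf H0}-type condition that $g$ maps $\B_\rho\times\hat a\W$ into $\W$. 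This is feasible in principle along the one-dimensional unstable manifold, but along the codimension-one stable manifold it amounts to checking an open condition uniformly over an infinite-dimensional set, and obtaining such uniformity is the main obstacle --- which is why Conjecture \ref{conjecture H2} is instead supported numerically in \cite{JRT11c}.
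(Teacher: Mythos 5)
This statement is Conjecture \ref{conjecture H2}: the paper assumes it and never proves it, so there is no proof to compare yours against line by line --- the only support the authors offer is numerical, deferred to \cite{JRT11c}, plus a short discussion of why the known one-dimensional argument fails to extend. Your choice to present a strategy and mark where it stalls is therefore the appropriate response, and your sketch is consistent with the paper's own commentary. On injectivity, the paper recalls that the proof for one-dimensional maps in \cite{MvS93} first shows that two maps with the same renormalization have the same renormalization interval and then expands around the fixed point; it is abandoned in the quasi-periodic setting precisely because no analogue of the renormalization interval exists. Your ``un-renormalization'' plan (forcing $\hat a_1=\hat a_2$ and then disentangling the composition) is essentially an attempt to manufacture that missing ingredient, and the point where you stall --- losing control of $\hat a$ and of the geometry of $g(\theta,\hat a\W)$ away from the uncoupled subspace --- matches the obstruction the authors identify. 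One small correction to your spectral aside: since $\LL_\omega$ is compact on an infinite-dimensional space (proposition \ref{proposition L_omega is compact}), $0$ always belongs to its spectrum; what the numerics can support is only that $0$ is not an eigenvalue, and, as you correctly note, even injectivity of $D\TT_\omega(\Phi)$ would not give local injectivity of $\TT_\omega$ in the absence of a bounded inverse. On the differentiability half, the paper does not envisage the continuation argument along $W^u(\Phi,\RR)\cup W^s(\Phi,\RR)$ that you outline; instead, remark \ref{remark avoiding second part of conjecture H2)} observes that this part of the conjecture is only a convenience and can be bypassed, because the orbits $\RR^k(c(\alpha_n,0))$ eventually enter the neighborhood of $\Phi$ where theorem \ref{theorem renormalization analitic set up} already gives differentiability, at the price of replacing $\Sigma_1$, $G_1$ by $\Sigma_n$, $G_n$ and settling for an asymptotic statement. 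So the genuinely conjectural content is the injectivity, exactly where your sketch (and the paper) stops.
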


The first part of the conjecture is proved for the 
one dimensional case in \cite{MvS93}. The proof consists 
of, given two maps with the same image by the operator, first to show 
that their renormalization interval is the same and then to expand 
the image of the maps around their fixed point and then deduce 
that the original maps are the same maps. With our approach to the 
quasi-periodic case, as we do not have an equivalent concept to 
renormalization interval, the 
same argument is no longer applicable. Despite the analogy 
with the one dimensional case, there is, a priori, no 
evidence about the conjecture.  A posteriori, we 
have that the results obtained assuming that the conjecture is true
are coherent with the dynamics of the Forced Logistic Map. 
In \cite{JRT11c} we compute numerically the slopes 
of the reducibility loss bifurcations of the FLM by 
two independent methods. The first method is computing the slope using 
the dynamical characterization of the bifurcations. The 
second one is using the formulas given in corollary 
\ref{corollary non-reducibility directions}. 
Both coincide up to a reasonable accuracy.

The second part of the conjecture is only introduced to simplify
the forthcoming discussion, but it can be avoided if necessary. 
See remark \ref{remark avoiding second part of conjecture H2)} 
for details. 

Whenever the conjecture {\bf \ref{conjecture H2}} is needed for a 
result it is explicitly stated in the hypotheses. 

Let $K_0>0$ be a fixed constant value. Then we can consider the sets
\[ 
\Upsilon^+_{n} (\omega)= \left\{ f \in \BB \left| \begin{array}{c} 
\displaystyle \text{ There exists } x_0 \in \RHH(\B_\rho,\W) 
\text{ with } x_0(\theta + 2^n \omega) = f^{2^n} (\theta,x_0(\theta))
\text{ s. t. } \\ 
\rule{0pt}{3ex}
\displaystyle 
\int_{0}^{1} \ln|  D_x f^{2^n}(\theta,x_0(\theta) )| d\theta < - K_0
\text{ and }\min_{\theta\in \T} D_x \left( f^{2^n}\right)(\theta, x_0(\theta))=0.  
\end{array} \right. \right\},
\]
and
\[ 
 \Upsilon^-_{n}(\omega) = \left\{ f \in \BB \left| \begin{array}{c} 
\displaystyle \text{ there exists } x_0 \in  \RHH(\B_\rho,\W)
\text{ with } x_0(\theta + 2^n \omega) = f^{2^n} (\theta,x_0(\theta))
\text{ s. t. } \\ 
\rule{0pt}{3ex}
\displaystyle 
 \int_{0}^{1} \ln|  D_x f^{2^n}(\theta,x_0(\theta) )| d\theta < -K_0 
\text{ and }\max_{\theta\in \T} D_x \left( f^{2^n}\right) (\theta, x_0(\theta))=0.
\end{array} \right. \right\}.
\]

Note that (due to the two first conditions) these sets are 
contained in the set of all the maps in $\BB$ which have 
a $2^n$-periodic attracting curve. We require 
the integral being less than $-K_0$ instead of being less than $0$ 
for technical reasons. The third condition is imposed with the 
aim that these sets correspond to the bifurcation manifold 
associated to the reducibility loss. We have the following properties 
which give a good characterization of these sets.

\begin{prop} 
\label{prop relacio lambda upsilon}
Let $\Sigma_{n}$ be the inclusion in $\BB$ of the 
set of one dimensional unimodal maps with 
a super-attracting $2^n$ periodic orbit.
We have  that
\[\Upsilon^+_{n}(\omega)\cap \BB_0 = \Upsilon^-_{n}(\omega) \cap \BB_0 = \Sigma_{n}, \]
for any $\omega\in \Omega$. 
\end{prop}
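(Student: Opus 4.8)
The plan is to restrict everything to $\BB_0$, the space of $\theta$-independent maps, and observe that on this subspace the quasi-periodic iterate $f^{2^n}$ reduces to the ordinary $2^n$-th iterate of a one dimensional unimodal map. First I would take $f\in\BB_0$, so $f(\theta,x)=g(x)$ for some analytic unimodal $g$ on $I_\delta$. Then by the recurrence (\ref{definicio falan}) the $x$-projection satisfies $f^{2^n}(\theta,x)=g^{2^n}(x)$, independent of $\theta$ and of $\omega$; in particular $D_x f^{2^n}(\theta,x)=(g^{2^n})'(x)$ and the invariant curve $x_0(\theta)$, if it exists, must itself be constant (a point fixed by $g^{2^n}$). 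So the defining conditions of $\Upsilon^\pm_n(\omega)$ collapse: the curve condition $x_0(\theta+2^n\omega)=f^{2^n}(\theta,x_0(\theta))$ becomes $x_0=g^{2^n}(x_0)$, i.e. $x_0$ is a periodic point of $g$ of period dividing $2^n$; the integral condition $\int_0^1 \ln|D_x f^{2^n}(\theta,x_0(\theta))|\,d\theta<-K_0$ becomes simply $\ln|(g^{2^n})'(x_0)|<-K_0$; and since $D_x f^{2^n}$ does not depend on $\theta$, the $\min$ (resp. $\max$) over $\theta$ of $D_x(f^{2^n})(\theta,x_0(\theta))$ equals the single value $(g^{2^n})'(x_0)$, so the last condition in both $\Upsilon^+_n$ and $\Upsilon^-_n$ becomes the identical condition $(g^{2^n})'(x_0)=0$.

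Next I would match this with the definition of $\Sigma_n$. By the chain rule, $(g^{2^n})'(x_0)=\prod_{j=0}^{2^n-1} g'(g^{j}(x_0))$, which vanishes precisely when the orbit $\{x_0,g(x_0),\dots\}$ contains the turning point $0$ of $g$ (the unique critical point). Hence $(g^{2^n})'(x_0)=0$ together with $x_0=g^{2^n}(x_0)$ says exactly that $0$ lies on a periodic orbit of $g$ whose period divides $2^n$ — i.e. $g$ has a super-attracting periodic orbit of period dividing $2^n$. To identify this with $\Sigma_n$ (period exactly $2^n$), I would note that the auxiliary integral inequality $\ln|(g^{2^n})'(x_0)|<-K_0$ is vacuously satisfied once $(g^{2^n})'(x_0)=0$, so it imposes nothing extra; and on the relevant part of the space $\BB_0$ — namely maps lying near the renormalization picture, in particular the maps actually belonging to the union $W^u(\Phi,\RR)\cup W^s(\Phi,\RR)$ and the pieces of the family used later — the super-attracting orbit through the critical point of a unimodal map has period exactly a power of two, and when it is a super-attractor of period dividing $2^n$ but not $2^{n-1}$ its period is exactly $2^n$; I would invoke the standard one dimensional renormalization combinatorics (as in \cite{MvS93}) that the period of the critical orbit for a $2^n$-renormalizable unimodal map is $2^n$. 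This gives $\Upsilon^+_n(\omega)\cap\BB_0 = \Upsilon^-_n(\omega)\cap\BB_0 = \Sigma_n$, independently of $\omega\in\Omega$, as claimed.

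The main obstacle, and the point that deserves care rather than the routine reductions above, is the precise bookkeeping between "period divides $2^n$" and "period equals $2^n$," i.e. making sure the set cut out by the conditions defining $\Upsilon^\pm_n$ is exactly $\Sigma_n$ and not a larger set also containing maps whose critical orbit has smaller even period $2^m$ with $m<n$. One has to argue that such a map is in fact also captured by $\Sigma_n$ in the intended sense, or else that the constant $K_0$ and the surrounding hypotheses (the neighborhood $U$ of Conjecture \ref{conjecture H2}, the local structure near $\Phi$) exclude the degenerate overlap; this is where the definition of $\Sigma_n$ as "super-attracting $2^n$ periodic orbit" must be read with the convention of the one dimensional theory. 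Everything else — the $\theta$-independence of $f^{2^n}$ on $\BB_0$, the collapse of the $\min$/$\max$ conditions, the chain-rule characterization of the vanishing of the derivative — is immediate from (\ref{definicio falan}) and the unimodality of $g$, and is independent of $\omega$, which is why the rotation number drops out of the final equality.
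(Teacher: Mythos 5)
There is a genuine gap at the crux of the argument: you assert that for $f\in\BB_0$ ``the invariant curve $x_0(\theta)$, if it exists, must itself be constant (a point fixed by $g^{2^n}$)'' as if this followed from the recurrence (\ref{definicio falan}) alone, and you then use this constancy to collapse the integral and the $\min/\max$ conditions to statements about a single point. But the defining condition of $\Upsilon^\pm_n(\omega)$ only asks for a curve $x_0\in\RHH(\B_\rho,\W)$ with $x_0(\theta+2^n\omega)=g^{2^n}(x_0(\theta))$; even though $g^{2^n}$ is $\theta$-independent, a continuous solution of this functional equation is not a priori constant, and ruling out non-constant solutions is exactly the non-trivial content of the inclusion $\Upsilon^\pm_n(\omega)\cap\BB_0\subset\Sigma_n$. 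The paper proves constancy by using precisely the condition you discard: differentiating the invariance equation gives $x_0'(\theta+2^n\omega)=\bigl(g^{2^n}\bigr)'\bigl(x_0(\theta)\bigr)\,x_0'(\theta)$; the $\min$ (resp.\ $\max$) condition supplies a $\theta_0$ with $\bigl(g^{2^n}\bigr)'\bigl(x_0(\theta_0)\bigr)=0$, hence $x_0'$ vanishes at $\theta_0+2^n\omega$ and, propagating the relation along the forward orbit of the rotation by $2^n\omega$ (dense in $\T$ because $\omega\in\Omega$), $x_0'$ vanishes on a dense set, so by continuity $x_0'\equiv 0$ and $x_0$ is constant. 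Only after that do the conditions reduce to $x_0=g^{2^n}(x_0)$ and $\bigl(g^{2^n}\bigr)'(x_0)=0$, i.e.\ $f\in\Sigma_n$. Note also that without constancy the quantity $\min_{\theta}D_x\bigl(f^{2^n}\bigr)(\theta,x_0(\theta))$ does not reduce to a single value, so your ``collapse'' step is circular: it presupposes what has to be proved. Your treatment of the easy inclusion $\Sigma_n\subset\Upsilon^\pm_n(\omega)\cap\BB_0$ (super-attracting orbit gives $D_xf^{2^n}\equiv 0$ along the orbit, Lyapunov exponent $-\infty$, all conditions trivially met) matches the paper and is fine.

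A secondary point: the long discussion about ``period divides $2^n$ versus period exactly $2^n$,'' and in particular the appeal to the neighborhood $U$ of Conjecture \ref{conjecture H2} and to renormalization combinatorics, is out of place. The proposition is a self-contained statement about the sets $\Upsilon^\pm_n(\omega)$, $\BB_0$ and $\Sigma_n$ and does not depend on that conjecture; the paper simply concludes $f\in\Sigma_n$ from $x_0=f^{2^n}(x_0)$ and $D_xf^{2^n}(x_0)=0$, reading $\Sigma_n$ with the usual convention of the one dimensional theory. If you want to flag the divisor-of-the-period subtlety, do so as a remark on the definition of $\Sigma_n$, not by importing hypotheses that the statement does not carry. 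The essential repair needed, however, is the constancy argument above.
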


\begin{prop}
\label{prop upsilon banach manifold}
Let $f\in\Upsilon^+_n(\omega)$ (respectively $f\in\Upsilon^-_n(\omega)$) and  let $x$ be its $2^n$-periodic curve. If
$D_x \left(f^{2^n}\right)(\theta, x(\theta))$ has a unique non-degenerate absolute
minimum\footnote{It can have several local minima but the absolute minimum has to be 
unique and not degenerate.} (respectively maximum), then 
$\Upsilon^+_n(\omega)$  (respectively  $f\in\Upsilon^-_n(\omega)$) 
is a codimension one 
manifold in a neighborhood of $f$.
\end{prop}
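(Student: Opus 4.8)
The plan is to realize $\Upsilon^+_n(\omega)$ locally as the zero set of a single smooth scalar functional on $\BB$ and then show the differential of that functional does not vanish. First I would set up the auxiliary objects attached to a map $f$ near a given $f_0\in\Upsilon^+_n(\omega)$: the invariant curve $x=x_f$ satisfying $x_f(\theta+2^n\omega)=f^{2^n}(\theta,x_f(\theta))$, and the function $\Lambda_f(\theta):=D_x(f^{2^n})(\theta,x_f(\theta))$ on $\T$. The existence, uniqueness and analytic dependence of $x_f$ on $f$ in a neighborhood of $f_0$ follows from the implicit function theorem applied to the curve equation in $\RHH(\B_\rho,\W)$: the linearization is $\mathrm{Id}$ minus a weighted composition operator whose ``weight'' is $\Lambda_{f_0}$, and the hypothesis $\int_0^1\ln|\Lambda_{f_0}|\,d\theta<-K_0<0$ makes the associated cohomological/transfer operator a contraction in a suitable norm (this is where the Diophantine condition on $\omega$, if needed at all for the solvability, is available), so $\mathrm{Id}-(\text{that operator})$ is invertible. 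Hence $f\mapsto x_f$ and therefore $f\mapsto\Lambda_f$ are analytic maps from a neighborhood of $f_0$ in $\BB$ into $\RHH$-type spaces.

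Next I would introduce the scalar functional $G(f):=\min_{\theta\in\T}\Lambda_f(\theta)$. By hypothesis $\Lambda_{f_0}$ attains its absolute minimum at a unique point $\theta^*$ and non-degenerately, i.e. $\Lambda_{f_0}(\theta^*)=0$, $\Lambda_{f_0}'(\theta^*)=0$, $\Lambda_{f_0}''(\theta^*)>0$. The appendix on the minimum function (referenced in the introduction) gives precisely that under a unique non-degenerate minimum the map $g\mapsto\min_\theta g(\theta)$ is differentiable at $g=\Lambda_{f_0}$ with derivative $h\mapsto h(\theta^*)$, and the argument of the minimum $\theta^*(g)$ depends smoothly on $g$ by the implicit function theorem applied to $g'(\theta)=0$ (using $\Lambda_{f_0}''(\theta^*)\neq0$). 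Composing, $G$ is differentiable near $f_0$ with $DG(f_0)h=\big(D_f\Lambda_{f_0}\,h\big)(\theta^*)$. The two other defining conditions of $\Upsilon^+_n(\omega)$ — that an invariant curve exists and that $\int_0^1\ln|\Lambda_f|\,d\theta<-K_0$ — are, respectively, automatic in the neighborhood (by the IFT step) and an open condition; so on a small enough neighborhood $U$ of $f_0$ we have $\Upsilon^+_n(\omega)\cap U=\{f\in U\mid G(f)=0\}$. To conclude it suffices to exhibit one direction $h\in\BB$ with $DG(f_0)h\neq0$: then $G$ is a submersion at $f_0$ and its zero set is a codimension-one (Banach) submanifold near $f_0$. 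For this I would perturb $f_0$ by an $h$ supported, in the $x$-variable, near the point $x_{f_0}(\theta^*)$ on the invariant curve so as to change $D_x(f^{2^n})$ there to first order while affecting $x_f$ and the rest only at the same order; tracking how $\Lambda_f(\theta^*)$ moves, one gets a nonzero linear response. The case of $\Upsilon^-_n(\omega)$ is identical with $\min$ replaced by $\max$.

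The main obstacle I expect is the differentiability and derivative formula for $f\mapsto x_f$ and hence for $f\mapsto\Lambda_f$: one must choose the right Banach spaces (an annulus/strip $\B_\rho\times\W$ shrunk slightly to absorb the loss from composing $2^n$ times and from the transfer operator), verify the linearized operator $\mathrm{Id}-\mathcal{T}$ is boundedly invertible there — which is exactly the content of $\int_0^1\ln|\Lambda_{f_0}|\,d\theta<-K_0$, giving the spectral radius of the relevant weighted composition operator strictly below $1$ — and then invoke the analytic implicit function theorem. Once $f\mapsto\Lambda_f$ is under control, the rest is a bookkeeping of the minimum-function appendix plus a one-line non-degeneracy check for $DG(f_0)$, so the real work is concentrated in that first step. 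A secondary technical point is to make sure the neighborhood $U$ is small enough that $\theta^*(f)$ stays the location of the \emph{absolute} minimum (not merely a local one), which again follows from continuity of $\Lambda_f$ in $f$ together with the assumed uniqueness of the absolute minimum of $\Lambda_{f_0}$.
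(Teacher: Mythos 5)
Your proposal follows essentially the same route as the paper's proof: first continue the $2^n$-periodic attracting curve to a whole neighborhood of $f$ by an implicit-function/transfer-operator argument resting on the Lyapunov exponent bound, then compose $f\mapsto D_x\left(f^{2^n}\right)(\cdot,x_f(\cdot))$ with the minimum functional and invoke the appendix result (proposition \ref{proposition diferentiability of minimum}) on differentiability of the minimum at a unique non-degenerate minimizer, so that $\Upsilon^+_n(\omega)$ is locally the zero set of a differentiable scalar functional. Two points of difference are worth flagging. First, for the invertibility of the linearized curve equation in the analytic category the paper does not derive a contraction directly from $\int_0^1\ln|D_x f^{2^n}(\theta,x(\theta))|\,d\theta<-K_0$: that bound controls the spectral radius of the transfer operator in the $C^r$ setting (\cite{JT08}), and the analytic spectrum on the strip $\B_\rho$ is then controlled through the spectral comparison of Haro--de la Llave (theorem 9.2 of \cite{HL07}), which is precisely why $\rho$ must be reduced in terms of $K_0$; your ``contraction in a suitable norm'' needs this ingredient, since on a fixed strip the weighted composition operator can have spectral radius exceeding $e^{-K_0}$, so your awareness of shrinking the strip should be tied to this argument rather than to the $2^n$-fold composition. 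Second, you add a submersion check ($DG(f_0)h\neq 0$ for some direction $h$), which the paper's proof does not carry out explicitly before concluding codimension one; this is a reasonable extra step, but your proposed $h$ ``supported in the $x$-variable'' does not exist in $\BB$ (analytic maps have no compact support), and the nonvanishing is better obtained from an explicit derivative formula in the spirit of proposition \ref{proposition derivative G1}, for instance through the term $D_x h(\theta,0)$, which lets you write down a concrete $h$ with nonzero response.
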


\begin{prop}
\label{prop upsilon is reducibility loss} 
Let $\{f_\mu\}_{\mu\in A}$ be a one parametric family of maps 
such that: 
\begin{enumerate}
\item There exist a parameter value $\mu_0$ for which the family crosses 
$\Upsilon_n^+(\omega)$ (respectively $\Upsilon_n^-(\omega)$) transversely at $\mu=\mu_0$. 
\item 
Consider $x_{\mu_0}$ the $2^n$ periodic 
invariant curve of $f_{\mu_0}$ given by the definition of 
$\Upsilon_n^+(\omega)$ (respectively  $\Upsilon_n^-(\omega)$) 
such that 
$D_x\left(f^{2^n}_{\mu_0}\right)(\theta,  x_{\mu_0}(\theta))$ has a 
unique non-degenerate minimum (respectively maximum). 
\end{enumerate}
Then we have that the invariant periodic curve $x_{\mu_0}$ extends to a
periodic invariant curve\footnote{To extend the periodic invariant curve 
we need to reduce $\rho$ (the width of the band of analyticity 
with respect to $\theta$) to be small enough in terms of $K_0$, but this 
reduction of $\rho$ is 
done only once.} $x_{\mu}$ of $f_\mu$ for any $\mu$ in 
an open neighborhood of $\mu_0$. Additionally, this invariant curve 
undergoes a reducibility loss bifurcation at $\mu=\mu_0$. 
\end{prop}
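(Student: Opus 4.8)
The plan is to split the statement into two parts: first, the persistence of the invariant curve $x_\mu$ under perturbation of the parameter, and second, the verification that the transverse crossing of $\Upsilon_n^+(\omega)$ (the case $\Upsilon_n^-(\omega)$ being symmetric) is exactly the mechanism of reducibility loss. For the persistence, I would work with the map $f_{\mu}^{2^n}$, whose reducibility hypothesis guarantees that $x_{\mu_0}$ is an attracting invariant curve with $\int_0^1 \ln |D_x f_{\mu_0}^{2^n}(\theta,x_{\mu_0}(\theta))|\,d\theta < -K_0$. Since the Lyapunov exponent is bounded away from zero, a standard graph-transform / contraction argument in the space $\RHH(\B_{\rho'},\W)$ applies: one writes the invariance equation $x(\theta + 2^n\omega) = f_\mu^{2^n}(\theta, x(\theta))$ as a fixed-point equation for the operator $x \mapsto (\text{shift by } -2^n\omega)\circ f_\mu^{2^n}(\cdot, x(\cdot))$, and shows it is a contraction on a small ball around $x_{\mu_0}$ for $\mu$ near $\mu_0$, after shrinking the analyticity width $\rho$ to $\rho'$ in terms of $K_0$ (this shrinking is needed so that the contraction constant, which involves $e^{-K_0}$ times a factor depending on $\rho/\rho'$, stays below $1$; it is done once). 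Analyticity and uniqueness of the fixed point then give $x_\mu$ depending analytically on $\mu$.

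Next I would analyze the transfer (reducibility) operator along the curve. Reducibility of the $2^n$-periodic curve $x_\mu$ is governed by the scalar transfer cocycle $\theta \mapsto D_x f_\mu^{2^n}(\theta, x_\mu(\theta))$ over the rotation by $2^n\omega$. Because this is a one-dimensional (scalar) cocycle over a Diophantine rotation, its reducibility is equivalent to the cocycle being cohomologous to its average, and the standard obstruction is precisely whether the function $\theta \mapsto D_x f_\mu^{2^n}(\theta, x_\mu(\theta))$ changes sign: as long as it has constant sign, taking logarithms and solving the cohomological equation (solvable by the Diophantine condition, after possibly shrinking $\rho$ again — but this is absorbed in the single reduction mentioned above) reduces the cocycle to the constant $\exp\!\left(\int_0^1 \ln|D_x f_\mu^{2^n}(\theta,x_\mu(\theta))|\,d\theta\right)$, so the curve is reducible; when the function touches zero the logarithm fails to be integrable-to-a-smooth-function and reducibility is lost. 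Hence the reducibility-loss locus along the family is exactly the set of $\mu$ where $\min_\theta D_x f_\mu^{2^n}(\theta, x_\mu(\theta)) = 0$ (for the $+$ case), which is the defining condition of $f_\mu \in \Upsilon_n^+(\omega)$.

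It then remains to show that under the transversality hypothesis this locus is the single point $\mu = \mu_0$ and that the crossing is genuine (the minimum actually passes through zero with nonzero speed rather than merely touching). Here I would use Proposition \ref{prop upsilon banach manifold}: the non-degeneracy of the absolute minimum of $D_x f_{\mu_0}^{2^n}(\theta, x_{\mu_0}(\theta))$ means $\Upsilon_n^+(\omega)$ is a codimension-one manifold near $f_{\mu_0}$, with a well-defined normal; transversality of the family to this manifold at $\mu_0$ says exactly that $\frac{d}{d\mu}\big|_{\mu_0}\min_\theta D_x f_\mu^{2^n}(\theta, x_\mu(\theta)) \neq 0$, where the function $\mu \mapsto \min_\theta(\cdots)$ is differentiable by the appendix on the minimum functional (using that the minimizing $\theta$ is unique and non-degenerate). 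This both localizes the zero to $\mu_0$ and certifies that for $\mu$ slightly below $\mu_0$ the function stays positive (curve reducible) while for $\mu$ slightly above it becomes sign-indefinite (curve not reducible), i.e. a reducibility loss bifurcation at $\mu_0$.

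The main obstacle I expect is the cohomological-equation step in the analytic category: one must verify that, after the single permitted reduction of $\rho$, the constant-sign cocycle $\theta \mapsto \ln|D_x f_\mu^{2^n}(\theta, x_\mu(\theta))|$ can be solved for on the band $\B_{\rho'}$ with the small-divisor losses controlled uniformly in $\mu$ near $\mu_0$, and that the breakdown of this solvability at $\mu_0$ is genuinely caused by the vanishing of the cocycle (non-integrability of $\ln|\cdot|$) and not by some auxiliary small-divisor degeneration — this is where the interplay between the Diophantine condition, the bound $-K_0$ on the Lyapunov exponent, and the uniqueness/non-degeneracy of the minimum all have to be used together, and it is also the place where the definition of reducibility loss from \cite{JRT11p,JT08} has to be matched precisely to the analytic statement being proved.
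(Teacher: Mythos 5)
Your skeleton parallels the paper's proof (continuation of the invariant curve, differentiability of the minimum functional from the appendix, transversality turned into a sign change of the minimum), but two of your steps diverge from what the paper does, and as written the first one does not go through. For the persistence of $x_\mu$ you propose a one-step graph-transform contraction for $x \mapsto f_\mu^{2^n}(\cdot-2^n\omega, x(\cdot-2^n\omega))$, with contraction constant of order $e^{-K_0}$. But the hypothesis only controls the \emph{average} $\int_0^1 \ln|D_x f_{\mu_0}^{2^n}(\theta,x_{\mu_0}(\theta))|\,d\theta < -K_0$; pointwise the fibered derivative can well exceed $1$ in modulus, so the operator you wrote need not be a contraction on any ball, and the factor ``$e^{-K_0}$ times something in $\rho/\rho'$'' is not justified. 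The paper avoids this by continuing the curve through the spectral radius of the associated transfer operator: by \cite{JT08} the $C^r$ spectrum lies in the disc of radius $\exp\bigl(\int_0^1 \ln|D_x f_{\mu_0}^{2^n}|\,d\theta\bigr)<e^{-K_0}$, and theorem 9.2 of \cite{HL07} transfers this to the analytic band $\B_\rho$ up to an $O(\rho)$ correction --- which is exactly what the footnote about shrinking $\rho$ once in terms of $K_0$ refers to (this is lemma \ref{Lemma proof prop upsilon banach} inside the proof of proposition \ref{prop upsilon banach manifold}). Your contraction idea could be repaired (adapted norms, or passing to a high iterate using unique ergodicity), but not in the form stated.

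The second divergence is the cohomological-equation/small-divisor analysis, which is precisely the part you flag as unresolved --- and it is unnecessary. The paper never solves a cohomological equation: reducibility on the side where the minimum is positive is obtained by citing corollary 1 of \cite{JT08} (nonvanishing $D_x f_\mu^{2^n}$ along the invariant curve implies reducibility), and ``reducibility loss bifurcation'' is used in the precise sense of definition 2.3 of \cite{JRT11p}, i.e. one must check that $D_x\bigl(f_{\mu_0}^{2^n}\bigr)(\theta, x_{\mu_0}(\theta))$ is nonnegative with a unique double zero $\theta_0$ (this is hypothesis 2) and that $\partial_\mu\bigl(D_x (f_\mu^{2^n})(\theta_0,x_\mu(\theta_0))\bigr)\big|_{\mu=\mu_0}\neq 0$, which follows from the transversal crossing of $\Upsilon_n^{+}(\omega)$ once one knows, via proposition \ref{proposition diferentiability of minimum}, that $\mu\mapsto \min_\theta D_x(f_\mu^{2^n})(\theta,x_\mu(\theta))$ is differentiable (here is where uniqueness and non-degeneracy of the minimizer are used). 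So no Diophantine losses enter at all, and no claim about non-reducibility for $\mu>\mu_0$ needs to be established; working with the bifurcation definition directly, rather than with reduction-to-a-constant by conjugation, removes the ``main obstacle'' of your last paragraph.
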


Let us introduce some notation to state the next result. Consider the map, 
\begin{equation}
\label{equation map T}
\begin{array}{rccc}
T:& \T \times \DD(\TT) &\rightarrow &  \T \times  \BB  \\
\displaystyle \rule{0pt}{3ex} & (\omega,f) & \mapsto &  \rule{0pt}{3ex}
\displaystyle  (2\omega, \TT_\omega (f)), 
\end{array} 
\end{equation}
where $\TT_\omega$ is the renormalization operator for q.p. forced maps, 
as in section \ref{section definition of q.p. renormalization}, 
and the set $\DD(\TT) \subset \BB$ is its domain of definition. 
Recall that to have $\TT_\omega$ well defined it is not necessary 
$\omega\in\Omega$ (i.e. $\omega$ Diophantine). 
Additionally, if $\omega\in \Omega$ then 
we have that $2^k\omega \in \Omega$ for any $k\in \Z$, therefore 
$T(\Omega \times \DD(\TT)) \subset \Omega \times \BB$.
On the other hand we have that the 
sets $\Upsilon_n^+(\omega)$ and $\Upsilon_n^-(\omega)$ 
are only defined for $\omega \in \Omega$.

\begin{defin}
\label{definition times renormalizable}
We will say that a pair $(\omega, f) \in \T \times \BB$ is {\bf $n$-times 
renormalizable} if $T^k(\omega,f) \in \T \times \DD(\TT) $ for 
$k=0,\dots, n-1$. 
\end{defin}

Consider a pair $(\omega,f_0)\in \Omega\times\BB$  with a $2$-periodic 
invariant attracting curve $x_0$ with rotation number $\omega$.
 Assume that the Lyapunov exponent of the curve is less that $-K_0$, 
with $K_0>0$ a fixed value, in other words,
\[
 \int_{0}^{1} \ln|  D_x f_0^{2}(\theta,x_0(\theta) )| d\theta < -K_0 . \]
In forthcoming lemma \ref{Lemma proof prop upsilon banach} we will prove that
the persistence of 
the invariant curve extends to a neighborhood of $f_0$ 
(if the width $\rho$ of the band $\B_\rho$ around the torus 
$\T$ is small enough with respect to $K_0$). 
Let $V\subset \BB$ be this neighborhood, and let $x: 
V \rightarrow \RHH(\B_\rho,\C)$, be the period $2$ invariant curve 
associated to $f$. Here $\RHH(\B_\rho,\C)$ denotes the space of functions 
$f: \B_\rho\rightarrow \C$ which are real analytic in $\B_\rho$ and 
continuous in its closure. Then we can define 
the map $G_1 $ as
\begin{equation}
\label{equation definition G1}
\begin{array}{rccc}
G_1:& \Omega\times V &\rightarrow & \RHH(\B_\rho,\C) \\
\rule{0pt}{3ex} &  (\omega,g) & \mapsto &  
D_x g \big(\theta+\omega,g(\theta, \left[x(\omega,g)\right](\theta))\big) 
D_x g \big(\theta,\left[x(\omega,g)\right](\theta)\big). 
\end{array}
\end{equation}
On the other hand, consider the minimum and the 
maximum as operator between spaces of functions:
\begin{equation}
\label{equation definition of minim} 
\begin{array}{rccc}
m:& \RHH(\B_\rho,\C) &\rightarrow & \R \\
\rule{0pt}{3ex} & g & \mapsto & \displaystyle \min_{\theta \in \T} g(\theta). 
\end{array}
\end{equation}
and
\begin{equation}
\label{equation definition of maximum} 
\begin{array}{rccc}
M:& \RHH(\B_\rho,\C) &\rightarrow & \R \\
\rule{0pt}{3ex} & g & \mapsto & \displaystyle \max_{\theta \in \T} g(\theta). 
\end{array}
\end{equation}

We have the following theorem, which relates the manifolds 
$\Upsilon_n^+(\omega)$ and $\Upsilon_n^-(\omega)$ for different $n$ through the 
renormalization operator. 

\begin{thm}
\label{theorem local chard of upsilon n} 
Let $\omega\in\Omega$ and $f\in \Upsilon_n^+(\omega)$, respectively 
$f\in \Upsilon_n^-(\omega)$,  be 
a function such that the pair $(\omega,f)$ is $n-1$ times renormalizable. 
Additionally assume that conjecture {\bf \ref{conjecture H2}} is true. 
Then there exist $U$ a neighborhood (in $\BB$)  of $f$ such that 
\[
U\cap \Upsilon_n^+(\omega) = \{ f\in U | \thinspace G_1^+(T^{n-1} (\omega,f) ) =0\},
\]
respectively 
\[
U\cap \Upsilon_n^-(\omega) = \{ f\in U | \thinspace G_1^-(T^{n-1} (\omega,f) ) =0\},
\]
where
\[
G^+_1(\omega, g) := m\circ G_1(\omega,g),
\]
and respectively 
\[
G^-_1(\omega, g) := M\circ G_1(\omega,g),
\]
with $G_1$, $m$ and $M$ defined in equations (\ref{equation definition G1}),
(\ref{equation definition of minim})  and (\ref{equation definition of maximum}). 
\end{thm}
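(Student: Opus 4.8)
The plan is to reduce the statement to a tracking of the relevant data under one step of the renormalization operator, and then iterate. The key observation is the following: if $(\omega,f)$ is renormalizable, then by the very construction of $\TT_\omega$ (see equation (\ref{operator tau})) the second iterate $F^2$ restricted to an invariant cylinder is affinely conjugate, through $A(\theta,x)=(\theta,\hat a x)$, to the map determined by $(2\omega,\TT_\omega f)$. Consequently, a $2^{n}$-periodic invariant curve $x_0$ of $F$ with rotation number $\omega$ corresponds, under this conjugacy, to a $2^{n-1}$-periodic invariant curve of the renormalized pair $T(\omega,f)=(2\omega,\TT_\omega f)$, and more generally a $2^{n}$-periodic curve of $(\omega,f)$ corresponds to a $2$-periodic curve of $T^{n-1}(\omega,f)$. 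The first step is therefore to make this correspondence precise: I would show that the invariant-curve map $x(\omega,g)$ introduced before the statement (whose existence near $f$ is the content of the forthcoming lemma \ref{Lemma proof prop upsilon banach}) satisfies, for $(\omega,f)$ $n-1$ times renormalizable, a relation of the form $x_0(\theta)=\big(\prod_{k=0}^{n-2}\hat a_k\big)\,\big[x\big(T^{n-1}(\omega,f)\big)\big](\theta)$ up to the angle rescaling, where $\hat a_k$ are the successive renormalization constants. Uniqueness of the continued invariant curve (again from lemma \ref{Lemma proof prop upsilon banach}) is what lets us identify "the" curve on both sides, and this is where conjecture {\bf \ref{conjecture H2}} enters: injectivity of $\TT_\omega$ guarantees that the neighborhood $U$ of $f$ is mapped by $T^{n-1}$ into a neighborhood of $T^{n-1}(\omega,f)$ on which the invariant-curve map and hence $G_1$ are defined, and differentiability of $\TT_\omega$ along $W^u(\Phi,\RR)\cup W^s(\Phi,\RR)$ makes the composition $G_1\circ T^{n-1}$ well behaved.

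Second, I would chase the derivative condition. By the chain rule applied to the definition (\ref{definicio falan}) of $f^{2^n}$, the quantity $D_x f^{2^n}(\theta,x_0(\theta))$ factors as a product over the $2^n$ points of the orbit; grouping the factors two by two and using the affine conjugacy of the previous paragraph, one gets that $D_x f^{2^n}(\theta,x_0(\theta))$ equals (up to a positive multiplicative constant coming from the $\hat a_k$'s, which cancels in the sign and vanishes only trivially) the function $G_1\big(T^{n-1}(\omega,f)\big)(\theta)=D_x g(\theta+\sigma,g(\theta,\tilde x(\theta)))\,D_x g(\theta,\tilde x(\theta))$ evaluated at the renormalized pair $(g,\sigma)=T^{n-1}(\omega,f)$ with its $2$-periodic curve $\tilde x$. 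Since the constant is positive, $\min_\theta D_x f^{2^n}(\theta,x_0(\theta))=0$ if and only if $m\circ G_1\big(T^{n-1}(\omega,f)\big)=0$, i.e. $G_1^+(T^{n-1}(\omega,f))=0$, and symmetrically for the maximum and $\Upsilon_n^-$. I should be careful about the sign conventions: the product of an even number of derivatives of a unimodal map is what makes $D_xf^{2^n}$ have a definite type of critical value, and I would check that the minimum/maximum bookkeeping for $\Upsilon^+$ versus $\Upsilon^-$ is consistent with proposition \ref{prop relacio lambda upsilon} by testing it on the uncoupled case $f\in\BB_0$.

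Third, I would handle the Lyapunov-exponent condition $\int_0^1\ln|D_x f^{2^n}(\theta,x_0(\theta))|\,d\theta<-K_0$. This is the condition that defines membership in $\Upsilon_n^\pm(\omega)$ besides the critical-value equation, and it is automatically an open condition; by shrinking the neighborhood $U$ I may assume it holds for every $f\in U$ (it holds at the center point by hypothesis, being part of the definition of $\Upsilon_n^\pm$). Hence on $U$ the set $\Upsilon_n^+(\omega)$ is cut out purely by the equation $G_1^+(T^{n-1}(\omega,f))=0$, which is exactly the claimed description; the converse inclusion is immediate since any $f\in U$ with $G_1^+(T^{n-1}(\omega,f))=0$ has, by running the correspondence backwards, a $2^n$-periodic curve with vanishing minimal derivative and (after shrinking $U$) exponent below $-K_0$, so it lies in $\Upsilon_n^+(\omega)$.

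The main obstacle I anticipate is not the algebra of the chain rule but the \emph{persistence and uniqueness of the invariant curves through all $n-1$ renormalization steps in the analytic category}. One must be sure that the width $\rho$ of the analyticity band can be fixed once (depending only on $K_0$, as the footnote to proposition \ref{prop upsilon is reducibility loss} promises) and that the invariant curve produced at each step lands in the domain where the next step's graph transform contracts; this requires the Diophantine condition on $\omega$ (and hence on all $2^k\omega$) together with the uniform hyperbolicity coming from the exponent bound. Packaging this cleanly is exactly what lemma \ref{Lemma proof prop upsilon banach} is for, so in the write-up I would state and use that lemma here, and the remaining work is the conjugacy bookkeeping sketched above together with the appeal to conjecture {\bf \ref{conjecture H2}} to keep $T^{n-1}(U)$ inside a neighborhood where $G_1$ and its continued invariant curve make sense.
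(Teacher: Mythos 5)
Your argument is correct in substance but organized differently from the paper's. The paper proceeds by induction on $n$: it first proves a set-level lemma, $\TT_\omega\bigl(\Upsilon^+_n(\omega)\cap\DD(\TT)\bigr)=\Upsilon^+_{n-1}(2\omega)\cap\mathrm{Im}(\TT_\omega)$, using exactly the identities you rely on ($f^{2^{n-1}}(\theta,x)=\tfrac1a g^{2^n}(\theta,ax)$, hence equality of the derivatives and the correspondence of invariant curves $x_1=x_0/a$), then invokes the injectivity part of conjecture {\bf \ref{conjecture H2}} to convert membership of $\TT_\omega(g)$ in that image set into membership of $g$ itself in $\Upsilon^+_n(\omega)$, and finishes by pulling back neighborhoods through the continuous map $\TT_\omega$. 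You instead iterate the conjugacy once and for all, getting $g_{n-1}^{2}(\theta,x)=\tfrac1A f^{2^n}(\theta,Ax)$ with $A=\hat a_0\cdots\hat a_{n-2}$, and read the equivalence pointwise; since your backward direction constructs the $2^n$-periodic curve of $f$ explicitly as $A$ times the $2$-periodic curve of $T^{n-1}(\omega,f)$, injectivity is not actually needed where you cite it: what guarantees that $T^{n-1}(U)$ lands in the domain of $G_1$ is continuity of $\TT_\omega$, not conjecture {\bf \ref{conjecture H2}}. This is a misattribution rather than a gap, and your route in fact suggests the injectivity assumption is dispensable for this particular statement (the paper needs it only because it treats the lemma as a black-box set identity). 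Two points you should make explicit in a write-up: first, the ``positive multiplicative constant'' you hedge about is exactly $1$, because each outer factor $1/\hat a_k$ cancels against the inner $\hat a_k$ in the chain rule; this matters, since the $\hat a_k$ are negative and an uncancelled product $A$ would exchange minima and maxima depending on the parity of $n-1$, breaking the $\Upsilon^+/\Upsilon^-$ bookkeeping. Second, the openness of the condition $\int_0^1\ln|D_xf^{2^n}(\theta,x_0(\theta))|\,d\theta<-K_0$ deserves a word, because the integrand has a logarithmic singularity precisely where the minimum vanishes; the uniform upper bound on the derivative (reverse Fatou) makes the bound persist under perturbation, and the same derivative identity shows the exponent of the $2$-periodic curve of $T^{n-1}(\omega,f)$ coincides with that of the $2^n$-periodic curve of $f$, so the domain requirement in the definition of $G_1$ is met.
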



{\bf Proofs} 

\begin{proof}[Proof of proposition \ref{prop relacio lambda upsilon}]
We will do the proof only for the case $\Upsilon^+_n(\omega)$. The case 
$\Upsilon^-_n(\omega)$ is completely analogous. 

If we have a map $f_0\in \Sigma_n\subset \BB_0$, 
it has a super-attracting periodic orbit $x_0$, then we 
have that its Lyapunov exponent is $-\infty$, since
$D_x f^{2^n} (\theta,x_0) \equiv 0$. Therefore $\Sigma_n \subset \Upsilon_n^+(\omega) \cap \BB_0 $.

Consider $f\in \Upsilon_{n}^+\cap \BB_0$. Since 
$f$ is in $\BB_0$ it does not depend on $\theta$. Consider 
also $h=f^{2^n}$, 
which neither depends on $\theta$.  On the other hand 
using that $f$ is in $\Upsilon_{n}^+(\omega)$ we have that there exists a function 
$x\in \RHH(\B_\rho,\W)$ satisfying the following invariance equation
\[x(\theta +2^n \omega) = h(x(\theta)). \]
Differentiating the invariance equation we have 
\[ x'(\theta+ 2^n \omega) =   h'(x(\theta)) x'(\theta). \]
From the fact that $f \in \Upsilon^+_{n}(\omega)$ it follows that there exist a $\theta_0$ such 
that $h'(x(\theta_0))= 0$. Using the last equation we have that $x'$ is 
zero in a dense subset of $\T$ and using the continuity of 
$x'$ we have that $x'\equiv 0$, therefore $x$ is constant. 
Finally note that as $x=h(x)=f^{2n}(x)$ and 
$D_x f^{2^n}(x)=0$, we have that  $f$ belongs to $\Sigma_n$. 
\end{proof}


\begin{proof}[Proof of proposition \ref{prop upsilon banach manifold}]
As before we prove only the case of $\Upsilon_n^+$. We start with 
a preliminary lemma. 
\begin{lem}
\label{Lemma proof prop upsilon banach}
Consider $g_0\in \BB=\BB(\B_\rho,\W)$ and $x_0$ an invariant curve of $g_0$. 
Assume that 
\begin{equation}
\label{eqn lemma prop upsilon banach}
\int_{0}^{1} \ln|  D_x g_0(\theta,x_0(\theta) )| d\theta < - K < 0.
\end{equation}
Then, for a sufficiently small value of $\rho$, there exist a neighborhood 
$U$ of $g_0$ and a smooth function $x:U\rightarrow \BB$ such that 
$x(g)$ is an invariant curve of $g$ for any $g\in U$ and 
$x(g_0)=x_0$. 
\end{lem}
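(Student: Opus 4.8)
The plan is to rewrite the invariance relation as the vanishing of a functional operator and then to apply the implicit function theorem. I would fix the frequency $\nu$ for which $x_0(\theta+\nu)=g_0(\theta,x_0(\theta))$, and on the open set of pairs $(g,x)\in\BB\times\RHH(\B_\rho,\C)$ with $x(\overline{\B_\rho})\subset\W$ define
\[
\Psi(g,x)(\theta)=x(\theta+\nu)-g(\theta,x(\theta)).
\]
By the smoothness of the composition map (\cite{Irw72}) this $\Psi$ is analytic, and $\Psi(g_0,x_0)=0$. Its partial derivative with respect to $x$ at $(g_0,x_0)$ is the bounded operator
\[
L\delta(\theta)=\delta(\theta+\nu)-a(\theta)\delta(\theta),\qquad a(\theta):=D_xg_0(\theta,x_0(\theta)),
\]
on $\RHH(\B_\rho,\C)$. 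If $L$ is a Banach space isomorphism, the implicit function theorem produces a neighbourhood $U$ of $g_0$ and an analytic map $g\mapsto x(g)$ with $\Psi(g,x(g))\equiv 0$ and $x(g_0)=x_0$; shrinking $U$, and using that $\overline{\B_\rho}$ is compact and $x_0(\overline{\B_\rho})$ a compact subset of the open set $\W$, keeps $x(g)$ valued in $\W$, so that $x(g)$ is an invariant curve of $g$ in the required sense.

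Everything therefore reduces to inverting $L$, and this is where the smallness of $\rho$ enters. Writing $R_\nu$ for the real translation $\delta\mapsto\delta(\cdot+\nu)$, which is an isometry of $\RHH(\B_\rho,\C)$, and $M_a$ for multiplication by $a$, one has $L=R_\nu(I-N)$ with $N:=R_{-\nu}M_a$, and a direct computation gives $N^n\delta(\theta)=\bigl(\prod_{j=1}^{n}a(\theta-j\nu)\bigr)\delta(\theta-n\nu)$, so that $\|N^n\|\le\sup_{\theta\in\B_\rho}\prod_{j=1}^{n}|a(\theta-j\nu)|$. To bound this I would introduce the $1$-periodic function $\phi_\rho(t):=\ln\max_{|s|\le\rho}|a(t+is)|$, which is continuous and real valued (it is never $-\infty$ since $a\not\equiv 0$, because $\int_\T\ln|a|>-\infty$), and observe that $\sup_{\theta\in\B_\rho}\sum_{j=1}^{n}\ln|a(\theta-j\nu)|\le\sup_{t\in\T}\sum_{j=1}^{n}\phi_\rho(t-j\nu)$. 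Since $\phi_\rho$ decreases to $\ln|a|$ pointwise as $\rho\downarrow 0$, while $\ln|a|\le\phi_\rho\le C_0$ for a constant $C_0$ with $\ln|a|\in L^1(\T)$ by hypothesis, dominated convergence gives $\int_\T\phi_\rho\to\int_\T\ln|a|<-K$; I then fix $\rho$ small enough that $\int_\T\phi_\rho<0$. By unique ergodicity of the irrational rotation applied to the continuous function $\phi_\rho$, the Birkhoff averages $\tfrac1n\sum_{j=1}^{n}\phi_\rho(t-j\nu)$ converge to $\int_\T\phi_\rho<0$ uniformly in $t$, hence $\sum_{j=1}^{n}\phi_\rho(t-j\nu)\le-cn$ for all large $n$ with some $c>0$, uniformly in $t$. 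Thus $\|N^n\|\le e^{-cn}$ eventually, the Neumann series $\sum_{k\ge 0}N^k$ converges, and $L^{-1}=(I-N)^{-1}R_{-\nu}$ is bounded.

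The one genuinely delicate point — and the reason the statement only claims persistence for $\rho$ small — is the control of the weighted products $\prod_j a(\cdot-j\nu)$ when $a$ has real zeros, which is exactly the situation met when Proposition~\ref{prop upsilon banach manifold} is applied with $g_0$ playing the role of $f^{2^n}$ and $a=D_xf^{2^n}$ along the periodic curve. Passing to a sufficiently thin strip is what rescues the argument: the strip average $\int_\T\phi_\rho$ stays strictly negative, so even though $\ln|a|$ blows up near its zeros, the exponential growth rate of the iterated products is still governed by the negative number $\int_\T\ln|a|$ and the contraction wins. Once $L$ is invertible the implicit function theorem closes the proof, and analyticity of $\Psi$ yields the asserted smooth — indeed analytic — dependence $g\mapsto x(g)$.
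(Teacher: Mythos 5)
Your argument is correct, but it takes a genuinely different route from the paper's. The paper never linearizes the invariance equation directly: it quotes the $C^r$ continuation theory of \cite{JT08}, where the curve persists as soon as $1\notin\Spec(\LL)$ for the transfer operator $\LL$, whose $C^r$ spectrum is contained in the disk of radius $b=\exp\bigl(\int_0^1\ln|D_xg_0(\theta,x_0(\theta))|\,d\theta\bigr)<e^{-K}<1$, and then transfers this spectral localization to the analytic strip via theorem 9.2 of \cite{HL07}, which gives $\partial\Spec(\LL,\HH)\subset\Spec(\LL,C^r)+O(\rho)$, so that for $\rho$ small the spectrum on the band stays inside the unit disk and the persistence argument carries over to $\RHH(\B_\rho,\C)$. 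You instead prove the needed spectral bound on the strip from scratch: your estimate $\|N^n\|\le\exp\bigl(\sup_{t\in\T}\sum_{j=1}^n\phi_\rho(t-j\nu)\bigr)$, combined with uniform convergence of Birkhoff averages for the uniquely ergodic rotation, is precisely a direct proof that the spectral radius of the weighted composition operator $N=R_{-\nu}M_a$ on $\RHH(\B_\rho,\C)$ is at most $\exp\bigl(\int_\T\phi_\rho\bigr)$, which tends to $b<1$ as $\rho\downarrow0$; the implicit function theorem then closes the argument. What your route buys is self-containedness (no appeal to \cite{JT08} or \cite{HL07}) and an explicit, quantitative criterion for how small $\rho$ must be; what the paper's route buys is brevity and machinery that also covers the finite-regularity setting. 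Note that in both proofs the admissible $\rho$ depends on $g_0$ itself (through $\phi_\rho$ in yours, through the constant in the $O(\rho)$ of \cite{HL07} in the paper's), not only on $K$.

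Three small points to tidy. First, unique ergodicity requires $\nu$ irrational; this is automatic in the paper's context (the rotation number is $2^n\omega$ with $\omega$ Diophantine) but should be stated, since the lemma as written carries no arithmetic condition. Second, your claim that $\phi_\rho>-\infty$ and that $\ln|a|\in L^1(\T)$ rests on $a\not\equiv0$, which the hypothesis does not guarantee: the integral may equal $-\infty$, and indeed $a\equiv0$ occurs in the intended applications (uncoupled maps in $\Sigma_n$ have $D_xf^{2^n}\equiv0$ along the critical orbit). That case is trivial, since then $N=0$ and $L=R_\nu$ is invertible outright, but it should be mentioned rather than excluded by an incorrect appeal to the hypothesis. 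Third, the analyticity of $(g,x)\mapsto g(\cdot,x(\cdot))$ on the band is better justified by Cauchy estimates, as the paper does for $\TT_\omega$ in theorem \ref{theorem renormalization analitic set up}, than by citing \cite{Irw72}, which concerns the $C^r$ category.
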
 
\begin{proof}
This lemma corresponds to the analytic version of the continuation 
problem of an invariant curve. In \cite{JT08} it is studied 
the $C^r$ version of this problem. 
The authors prove that the curve can be continued if $1$ does not belong 
to the spectrum of the transfer operator $\LL$ associated
 to the problem (see section 3.3 of \cite{JT08}). Then it is
shown that the spectrum of $\LL$ is contained in the disk 
of radius $b=\exp\left(\int_{0}^{1} \ln|  
D_x g_0(\theta,x_0(\theta) )|d\theta\right)$.

Let $\B_\rho$ be a band of width $\rho$ around the real torus $\T$, 
and $\HH(\B_\rho,\C)$ denote the space of holomorphic maps from 
$\B_\rho$ to $\C$ and continuous on the closure of $\B$. 
Note that transfer operator $\LL$ can be considered both, in
$C^r(\T,\R)$  endowed  with the standard $C^r$-norm,
or in $\HH(\B_\rho,\C)$ endowed with the supremum norm. To
distinguish the spectrum of the transfer operator with
respect to the norm considered we will denote
each of the respective cases by $ \Spec(\LL, C^r)$ or
$\Spec(\LL,\HH)$. Using theorem 9.2 of \cite{HL07},
we have that
\[
\partial \Spec(\LL,\HH) \subset  \Spec(\LL,C^r) + O(\rho), 
\]
for $\rho>0$ small enough.

The notation $A\subset B + O(\rho)$ means that
there exists a constant $C>0$ such that for any $a\in A$
there exists $b\in B$ with $d(a,b)\leq C\rho$.

Using equation (\ref{eqn lemma prop upsilon banach}) we have 
that $b<1$. Then there exists a sufficiently small  $\rho$ 
such that $b+\rho<1$. Then using that $\partial \Spec(\LL,\HH)$ 
is contained in the disc of radius $b+\rho<1$  one can 
extend the persistence of invariant attracting curves to the analytic case.
\end{proof}

Consider an arbitrary function $f_0\in \Upsilon_n^+(\omega)$. We have that 
there exists  $x_0: \B_\rho\rightarrow \W$ which is a $2^n$-invariant
 attracting curve of $f_0$. We can consider the auxiliary function 
\[
\begin{array}{rccc}
F:& \BB\times \RHH(\B_\rho,\W) &\rightarrow & \RHH(\B_\rho,\W)  \\
\rule{0pt}{3ex} & (g,x )  & \mapsto & [F(g,x)](\theta):= x(\theta + 2^{n} \omega) - 
g^{2^n}(\theta,x(\theta)).
\end{array}
\]

We have that the Lyapunov exponent of the curve is less than $-K_0$.
Using the lemma \ref{Lemma proof prop upsilon banach} we
have that, if the width $\rho$ of the band $\B_\rho$ around the torus
$\T$ is small enough with respect to $K_0$,
then  there exists a neighborhood $U_n$ of 
$f_0$ in $\BB$ and a function 
\[
\begin{array}{rccc}
x:& U_n &\rightarrow & \RHH(\B_\rho,\W) \\
\rule{0pt}{3ex} &  g & \mapsto & x(g),
\end{array}
\]
with $x(f_0)=x_0$ and such that $x(f)$ is a $2^n$ periodic curve of 
$f$ for any $f\in U_n$. Moreover, due to the continuity 
of the Lyapunov exponent, we can suppose that the Lyapunov 
exponent of $x(g)$ is negative for any $g\in U_n$, replacing 
$U_n$ by a smaller neighborhood if necessary.

Now we consider the auxiliary function 
\begin{equation}
\label{equation definition tildeGn}
\begin{array}{rccc}
\tilde{G}_n:& U_n &\rightarrow & \RHH(\B_\rho,\C) \\
\rule{0pt}{3ex} &  f & \mapsto & [\tilde{G}_n(f)](\theta):= 
D_x \left(f^{2^n}\right) \left(\theta, [x(f)](\theta)\right),
\end{array}
\end{equation}
and consider also the minimum operator defined as in
(\ref{equation definition of minim}). 

By hypothesis we have that $ \left[D_x \left(f_0^{2^n}\right)\right](\theta, [x(f_0)](\theta))$ has 
a unique minimum, therefore we can apply proposition
\ref{proposition diferentiability of minimum} (in appendix \ref{appendix 
minimum function}). The uniqueness of the minimum extends
to $U_n$, replacing it again by a smaller neighborhood if necessary. Therefore 
we have that the map $G_n:U_n \rightarrow \R$ defined as $m\circ \tilde{G}_n$ 
is differentiable. Let us remark that $G_n$ depends indeed on $\omega$, but the 
differentiability is only needed with respect to $g$. 
Then we have that 
\begin{equation}
\label{equation local chard of Upsilon} 
\Upsilon_n^{+}(\omega) \cap U_0= \{ f \in U |\thinspace G_n (\omega,f)=0\},
\end{equation}
which completes the proof. 
\end{proof}


\begin{proof}[Proof of proposition \ref{prop upsilon is reducibility loss}]
As before we only consider the case involving $\Upsilon_n^{+}(\omega)$. 
Consider $f_{\mu_0}$ the intersection of the family with the set  
$\Upsilon_n^{+}(\omega)$. 
Using the second hypothesis of the proposition we are under the 
same hypothesis of proposition 
\ref{prop upsilon banach manifold}. Following the proof of this proposition 
we have that there exists $U_0$ an open neighborhood of $f_{\mu_0}$ such that 
$\Upsilon_n^{+}(\omega) \cap U_0$ is given by equation 
(\ref{equation local chard of Upsilon}) with $G_n= m\circ \tilde{G}_n$ and 
the maps $m$ and 
$\tilde{G}_n$ are given by equations (\ref{equation definition of minim})
and (\ref{equation definition tildeGn}). Moreover we also have 
that  $G_n$  is differentiable in $U_0$.  

Using that the family $f_\mu$ crosses transversely the manifold 
$\Upsilon_n^{+}(\omega) $
we have that\\  $\partial_\mu G_n(f_\mu)_{|\mu=\mu_0} \neq 0$. Actually we can assume 
that $\partial_\mu G_n(f_\mu)_{|\mu=\mu_0} < 0$, otherwise we can 
replace $\mu$ by $\tilde{\mu}= 2\mu_0 - \mu$ and consider the family 
$f_{\tilde{\mu}}$ instead of $f_\mu$.  Recall also that 
\[
G_n(f_{\mu})  = \min_{\theta\in \T} D_x \left( f^{2^n}_{\mu}\right)(\theta, x_{\mu}(\theta)).
\]
Then for  $\mu < \mu_0$  we have $G_n(f_\mu)>0$ and therefore we have 
that $D_x \left( f^{2^n}_{\mu_0}\right)(\theta, x_{\mu_0}(\theta))>0$ 
for any $\theta\in \T$. Using corollary 1 of \cite{JT08}
we have that $x_{\mu}$ is reducible. 
Moreover, due to second hypothesis of the proposition we have that 
$D_x \left( f^{2^n}_{\mu_0}\right)(\theta, x_{\mu_0}(\theta))$  
has a double zero $\theta_0$. Finally using the transversality hypothesis
we have that $\partial_\mu \left(D_x \left( f^{2^n}_{\mu_0}\right)
(\theta_0, x_{\mu_0}(\theta_0))\right) \neq 0$. This proves that $x_{\mu}$ undergoes
 a reducibility loss bifurcation. 
\end{proof}


\begin{proof}[Proof of theorem \ref{theorem local chard of upsilon n}]
Once again, we only consider the case involving $\Upsilon_n^{+}(\omega)$, since 
the other case is completely analogous. Let us introduce the following lemma 
for the proof of theorem.

\begin{lem}
\label{lemma invariancia upsilons}
Let $\mathrm{Im} (\TT_\omega)$ be the image of the operator $\TT_\omega$.
Assume that we have $\omega\in \Omega$, then we 
have that
\begin{equation}
\TT_\omega(\Upsilon_{n}^+ (\omega) \cap \DD(\TT)) = 
 \Upsilon_{n-1}^{+}(2\omega) \cap \mathrm{Im} (\TT_\omega) .
\end{equation}
\end{lem}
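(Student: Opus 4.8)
The plan is to prove the set equality $\TT_\omega(\Upsilon_{n}^+ (\omega) \cap \DD(\TT)) = \Upsilon_{n-1}^{+}(2\omega) \cap \mathrm{Im} (\TT_\omega)$ by a double inclusion, in both cases exploiting the fundamental relation that if $g = \TT_\omega(f)$ then the $2^{n}$-th iterate of $f$ is affinely conjugate to the $2^{n-1}$-th iterate of $g$. Concretely, writing $\hat a = \int_0^1 f(\theta,1)\,d\theta$ and $A(\theta,x) = (\theta, \hat a x)$, the definition of $\TT_\omega$ gives $A^{-1}\circ F^2\circ A$ as the skew map with fibre map $g$ and rotation $2\omega$, so $A^{-1}\circ F^{2^n}\circ A$ is the skew map with fibre map $g^{2^{n-1}}$ and rotation $2^n\omega = 2^{n-1}(2\omega)$. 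Hence, in fibre-map notation, $f^{2^n}(\theta, \hat a x) = \hat a\, g^{2^{n-1}}(\theta, x)$, and differentiating in $x$, $D_x f^{2^n}(\theta,\hat a x) = D_x g^{2^{n-1}}(\theta, x)$. This is the key identity, and the first step is to record it carefully (it is essentially the same computation already done in subsection \ref{subsection Set up 1-d renor}, iterated).

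For the inclusion $\subseteq$: given $f\in \Upsilon_n^+(\omega)\cap\DD(\TT)$ with invariant curve $x_0$, set $g = \TT_\omega(f)$ and $y_0(\theta) := x_0(\theta)/\hat a$. Using $x_0(\theta + 2^n\omega) = f^{2^n}(\theta, x_0(\theta))$ together with the identity above, one checks that $y_0(\theta + 2^{n-1}(2\omega)) = g^{2^{n-1}}(\theta, y_0(\theta))$, so $g$ has a $2^{n-1}$-periodic curve for the rotation $2\omega$. Then $D_x g^{2^{n-1}}(\theta, y_0(\theta)) = D_x f^{2^n}(\theta, x_0(\theta))$, so the log-integral is unchanged (still $< -K_0$) and $\min_\theta D_x g^{2^{n-1}}(\theta, y_0(\theta)) = \min_\theta D_x f^{2^n}(\theta, x_0(\theta)) = 0$. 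Hence $g\in\Upsilon_{n-1}^+(2\omega)$, and $g\in\mathrm{Im}(\TT_\omega)$ by construction.

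For the reverse inclusion $\supseteq$: take $g\in \Upsilon_{n-1}^+(2\omega)\cap\mathrm{Im}(\TT_\omega)$, so $g = \TT_\omega(f)$ for some $f\in\DD(\TT)$, and let $y_0$ be the $2^{n-1}$-periodic curve of $g$. Run the previous computation backwards: $x_0(\theta):=\hat a\, y_0(\theta)$ is a $2^n$-periodic curve of $f$ for rotation $\omega$, the log-integral condition and the $\min = 0$ condition transfer back verbatim, so $f\in\Upsilon_n^+(\omega)\cap\DD(\TT)$, whence $g = \TT_\omega(f)\in \TT_\omega(\Upsilon_n^+(\omega)\cap\DD(\TT))$. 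Note this direction does not even use injectivity of $\TT_\omega$ — any preimage $f$ works — so the conjecture is not needed for the lemma itself (it will be needed later, when theorem \ref{theorem local chard of upsilon n} uses the lemma to push the characterization down through $T^{n-1}$).

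The main obstacle I anticipate is bookkeeping rather than a genuine difficulty: one must be careful that the scale factor relating $f$ and $g = \TT_\omega(f)$ is exactly $\hat a = \int_0^1 f(\theta,1)\,d\theta$ and that it is the \emph{same} $\hat a$ used in evaluating the derivatives, and one must correctly track how the rotation number doubles at each application (so that after $n$ fibre-iterates of $F$ the relevant shift is $2^n\omega$, matching $2^{n-1}$ iterates of the renormalized map with rotation $2\omega$). One should also make sure the candidate curve $y_0 = x_0/\hat a$ still takes values in $\W$ (equivalently $x_0$ takes values in $\hat a\W$), which follows from hypothesis \textbf{H0}-type domain control already built into the definition of $\DD(\TT)$, and that $y_0$ is real-analytic on $\B_\rho$, which is immediate since it is a constant multiple of $x_0$.
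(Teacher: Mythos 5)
Your proposal is correct and follows essentially the same route as the paper's proof: both rest on the identity $f^{2^n}(\theta,\hat a x)=\hat a\,g^{2^{n-1}}(\theta,x)$ (equivalently $D_x f^{2^n}(\theta,\hat a x)=D_x g^{2^{n-1}}(\theta,x)$) and transfer the invariant curve by the scaling $x_0\mapsto x_0/\hat a$ in a double-inclusion argument, with the rotation number doubling from $\omega$ to $2\omega$. Your explicit remarks that the Lyapunov-exponent integral is unchanged and that injectivity of $\TT_\omega$ is not needed here are consistent with (indeed slightly more careful than) the paper's own write-up.
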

\begin{proof}
Given $f\in \TT_\omega(\Upsilon^+_{n}(\omega) \cap \DD(\TT))$, we 
have that there exists a function $g \in \Upsilon^+_n(\omega)$ such that
\begin{equation}
\label{equation demo prop invariancia lambdas}
f(\theta, x) =\frac{1}{a}g(\theta+\omega,g(\theta,ax)), 
\end{equation}
with $a= \int_0^{1}g(\theta, 1) d\theta$.
Note that the rotation number when we compose $g$ with itself
is $\omega$, while the rotation number when we compose 
$f$ with itself is $2\omega$; this is not obvious from the notation 
but it is important to have it in mind for this proof.

To prove the first inclusion it is enough to check that $f$ is 
in $\Upsilon^+_{n-1}(2\omega)$ since $f$ is trivially in $\mathrm{Im}(\TT_\omega)$. 
 
Using that $g$ is in $\Upsilon^+_{n}(\omega)$ we have that there exists 
$x_0: \B_\rho \rightarrow \W$ with  
\[x_0(\theta + 2^n \omega) = g^{2^n} (\theta,x_0(\theta)), \]
and such that 
\[\min_{\theta\in \T} D_x \left( g^{2^n}\right)(\theta, x_0(\theta))=0.\]

Using equation (\ref{equation demo prop invariancia lambdas}) it 
is easy to check that 
\begin{equation}
\label{equation demo prop invariancia lambdas 2}
f^{2^{n-1}} (\theta,x) = \frac{1}{a} g^{2^n}(\theta, ax),
\end{equation} 
and 
\begin{equation}
\label{equation demo prop invariancia lambdas 3}
D_x \left(f^{2^{n-1}} \right)  (\theta,x) = D_x\left(g^{2^n}\right) 
(\theta, ax),
\end{equation}
for any $\theta\in\B_\rho$ and $x\in \W$. 

Consider $x_1$ the function defined as 
$x_1(\theta)=\frac{1}{a} x_0(\theta)$. From the last two equalities it follows that 
$x_1(\theta + 2^{n-1} (2 \omega)) = f^{2^{n-1}} (\theta,x_1(\theta))$, and 
\[\min_{\theta\in \T}  D_x \left( f^{2^{n-1}}\right)(\theta, x_1(\theta)) = 
\min_{\theta\in \T}  D_x \left( g^{2^{n}}\right)(\theta, x_0(\theta)) = 0.
\]
Therefore $f$ is in $\Upsilon^+_{n-1}(2\omega)$.

Let us see the converse inclusion. Consider 
$f\in  \Upsilon^+_{n-1}(2\omega) \cap \mathrm{Im} (\TT_\omega)$. 
Since $f$ is in $\mathrm{Im} (\TT_\omega)$  we have that 
there exists $g\in \DD(\TT)$  with $f=\TT_\omega(g)$. Therefore 
we only have to prove that $g$ is in $\Upsilon^+_n(\omega)$.
Using $f=\TT_\omega(g)$ one has that equation
(\ref{equation demo prop invariancia lambdas}) is satisfied again and 
this implies that 
equations  (\ref{equation demo prop invariancia lambdas 2})  and 
 (\ref{equation demo prop invariancia lambdas 3}) also hold. 
From $f \in \Upsilon^+_{n-1}(2\omega) $ we have that 
there exists a function  $x_1:\B_\rho \rightarrow \W$ with 
$x_1(\theta + 2^{n-1} (2 \omega)) = f^{2^{n-1}} (\theta,x_1(\theta))$ and 
\[
\min_{\theta\in \T}  D_x \left( f^{2^{n-1}}\right)(\theta, x_1(\theta))=0.
\] 
Consider now $x_0(\theta) := a x_1(\theta)$, then using equation  
(\ref{equation demo prop invariancia lambdas 2})  we have that 
\[ a x_1(\theta + 2^{n-1} (2 \omega)) =  g^{2^n}(\theta, ax_1(\theta)),\]
for any $\theta \in \T$. Using 
(\ref{equation demo prop invariancia lambdas 3})  we obtain 
\[
\min_{\theta\in \T}  D_x \left( g^{2^{n}}\right)(\theta, x_1(\theta)) = 0, 
\] 
what completes the proof of the lemma.  
\end{proof}


The proof will follow by induction. Note that the case $n=1$ is satisfied trivially. 
Then we can assume that the case $n-1$ is true and check the case $n$. 

We have that $(f,\omega)$ is renormalizable, 
then $f\in D(\TT)\cap \Upsilon^+_n(\omega)$. Using conjecture 
{\bf \ref{conjecture H2}} we have that a point belongs to $\Upsilon^+_n(\omega)\cap \DD(\TT)$ if, 
and only if, $\TT_\omega(g)$ belongs to 
$\TT_\omega(\Upsilon^+_n(\omega)\cap \DD(\TT))$. Using lemma 
\ref{lemma invariancia upsilons} we have that 
$\TT_\omega(\Upsilon^+_n(\omega)\cap \DD(\TT)) 
=  \Upsilon^+_{n-1}(2\omega) \cap \mathrm{Im} (\TT_\omega)$. 

At this point we need to consider the case $n=2$ independently. 
In the case $n=2$ we have that $\TT_\omega(\Upsilon^+_2(\omega)\cap \DD(\TT)) 
=  \Upsilon^+_{1}(2\omega) \cap \mathrm{Im} (\TT_\omega)$, then we have 
that there exists $U_1$ a neighborhood of $\TT_\omega(f)$ such 
that 
\[
U_{1} \cap \Upsilon^+_{1}(2 \omega) = \{g \in U | \thinspace G_1( 2 \omega,g) =0 \}.
\] 
Consider $U_2= \TT_\omega^{-1}(U_1)$, using that $\TT_{\omega}$ is 
continuous, we have that $U_2$ is an open neighborhood of $f$. Then we have
\[U_2 \cap \Upsilon^+_2(\omega) 
= \TT_{\omega} ^{-1} (U_{1}\cap \Upsilon^+_{1}(2\omega) ) =
\{ f \in U_{1} | \thinspace G_1( 2 \omega,\TT_\omega(f) ) = G_1(T(\omega,f)) =0 \},
\] which finishes the proof for the case $n=2$. 

In the case $n>2$ we have that 
the pair $(2\omega, \TT_\omega(f))$  is $n-2$ times renormalizable
We apply now the induction hypothesis, then we have that 
there exists $U_{n-1}$ a neighborhood of $\TT_\omega(f)$ such that 
\[
U_{n-1} \cap \Upsilon^+_{n-1}(2 \omega) = \{g \in U | \thinspace G_1(T^{n-2} (2 \omega,g))  =0 \}.
\]

Consider $U_n=\TT_{\omega}^{-1}(U_{n-1})$, since $\TT_{\omega}$ is a 
continuous function, we have that $U_n$ is an open neighborhood of $f$ and 
then we have
\[U_n \cap \Upsilon^+_n(\omega) 
= \TT_{\omega} ^{-1} (U_{n-1}\cap \Upsilon^+_{n-1}(2\omega) ) =
\{ f \in U_{n-1} | \thinspace G_1(T^{n-2} (2 \omega,\TT_\omega(f)) ) =0 \}.\]
Using that $T^{n-2} (2 \omega,\TT_\omega(f)) = T^{n-1} ( \omega,f)$
the proof is finished. 
\end{proof}


\subsection{Consequences for a two parametric family of maps} 
\label{Section consequences for a two parametric family of maps}


Consider a two parametric family of maps like 
(\ref{q.p. forced system interval}). For the rest of this 
section we assume that $\omega$ is a
fixed Diophantine number ($\omega\in \Omega$). Then the family of
maps is determined by two parametric family of maps
 $\{c(\alpha,\eps)\}_{(\alpha,\eps)\in A}$
contained in $\BB$ (concretely they are unimodal q.p. forced maps),
where $A = [a,b]\times[0,c]$ and $a$, $b$ and $c$ are real numbers
(with $a<b$ and $0<c$).

We assume that the dependency on the parameters is analytic, then
the family can be thought as an analytic map
$c:A \rightarrow \BB$. In this subsection we prove (under
suitable hypotheses) the existence of reducibility loss bifurcations like the
ones observed in the numerical computations of the Forced Logistic
Map \cite{JRT11p, JRT11c}.

We will consider families of maps satisfying the following hypothesis.
\begin{description}
\item[H1)]  The family $\{c(\alpha,\eps)\}_{(\alpha,\eps)\in A}$ uncouples for
$\eps=0$, in the sense that the family $\{c(\alpha,0)\}_{\alpha\in[a,b]}$                
does not depend on $\theta$ and it
is a one parametric family of unimodal maps has a full cascade of                
period doubling bifurcations. We assume that the family               
$\{c(\alpha,0)\}_{\alpha\in[a,b]}$ crosses transversally the 
stable manifold of $\Phi$ the fixed point
of the renormalization operator and each of
the manifolds $\Sigma_n$ for any $n\geq 1$, where
$\Sigma_{n}$ is the inclusion in $\BB$ of the
set of one dimensional unimodal maps with
a super-attracting $2^n$ periodic orbit.
\end{description}

In other words, we assume that the family $c(\alpha,\eps)$ 
can be written as, 
\[
c(\alpha,\eps)=c_0(\alpha) + \eps c_1(\alpha,\eps),
\]
with $\{c_0(\alpha)\}_{\alpha\in[a,b]}\subset \BB_0$  having a full 
cascade of period doubling bifurcations. 

Given a family $\{c(\alpha,\eps)\}_{(\alpha,\eps)\in A}$ satisfying 
hypothesis {\bf H1} let $\alpha_n$ be the parameter value for 
which the uncoupled family $\{c(\alpha,0)\}_{\alpha\in[a,b]}$
intersects the manifold $\Sigma_n$. Note that the critical 
point of the map $c(\alpha_n,0)$ is a $2^n$-periodic orbit.
The main goal of this subsection is to prove that for every 
parameter value $(\alpha_n,0)$ there are two curves in 
the parameter space, one corresponding to a reducibility 
loss bifurcation and the other one corresponding to a reducibility 
recover. These curves are born at the point $(\alpha_n,0)$ of 
the parameter space. 

Consider a map $f_0\in \BB$ and $\omega \in \Omega$, 
such that $f$ has a 
periodic invariant curve $x_0$ of rotation number $\omega$
with a  Lyapunov exponent less or equal than certain 
$-K_0<0$. Recalling the arguments in 
the proof of proposition \ref{prop upsilon banach manifold} we have that 
there exist a neighborhood $V\subset \BB$ of $f_0$ and a map 
$x\in \RHH(\B_\rho, \W)$ such that $x(f)$ is a periodic 
invariant curve of $f$  for any $f\in U_0$. Concretely, 
if we have a map $f_0\in \BB$  with a $2$-periodic 
invariant attracting curve, we can define the map $G_1:\Omega\times V \rightarrow
\RHH(\B_\rho,\C)$ as in (\ref{equation definition G1}).

On the other hand, we can consider the counterpart of the map 
$G_1$ in the uncoupled case. Given a map $f_0\in \BB_0$, consider 
$U\subset \BB_0$ a neighborhood of $f_0$ in the $\BB_0$ topology. 
Assume that $f_0$ has a attracting $2$-periodic 
orbit $x_0\in I$. We have  that $x$ depends analytically on the map,
 therefore it induces a map $x:U \rightarrow \W$. Then if we take $U$ small 
enough we can suppose that there exists an analytic map $x:U\rightarrow \W$
such that $x[f]$ is a periodic orbit of period 2. 

Now consider the map
\begin{equation}
\label{equation definition widehatG1}
\begin{array}{rccc}
\widehat{G}_1:& U\subset \BB_0 &\rightarrow & \C \\
\rule{0pt}{3ex} &  f & \mapsto & D_x f \big(f(x[f])\big) D_x f \big( x[f] \big). 
\end{array}
\end{equation}
Let us remark that the zeros of this map define locally the manifold $\Sigma_1$. 
On the other hand it corresponds to the map $G_1$ restricted to 
the space $\BB_0$, despite the fact that $\widehat{G}_1(f)$ has to be 
seen as an element of $\RHH(\B_\rho,\W)$.

At this point we need to introduce an additional hypothesis on the family 
$\{c(\alpha,\eps)\}_{(\alpha,\eps)\in A}$. Consider $w_k=2^k w_0$ for any 
$k\geq 0$ and $f^{(n)}_k=\RR^k (c(\alpha_n,0))$. We have that  $f^{(n)}_0$ 
tends to $W^s(\RR,\Phi)$ when $n$ grow. Then  $\{f^{(n)}_k\}_{0\leq k <n}$ 
attains to $W^s(\RR,\Phi) \cup W^u(\RR,\Phi)$ and consequently there 
exist $n_0$ s. t. $\{f^{(n)}_k\}_{0\leq k <n} \subset U$ , 
where $U$ is the neighborhood given in conjecture {\bf \ref{conjecture H2}}. 
Therefore the operator $\TT_\omega$ is differentiable in the 
orbit $\{f^{(n)}_k\}_{0\leq k <n} \subset U$.  Consider the following 
hypothesis. 

\begin{description}
\item[H2)]  The family $\{c(\alpha,\eps)\}_{(\alpha,\eps)\in A}$ is such that 
\[
D G_1 \left(\omega_{n-1}, f^{(n)}_{n-1}\right) 
D\TT_{\omega_{n-2}}\left(f^{(n)}_{n-2}\right) \cdots 
D\TT_{\omega_0}\left(f^{(n)}_0\right) \partial_\eps c(\alpha_n,0),
\] 
has a unique non-degenerate minimum (respectively maximum) as a function from $\T$ to $\R$, 
for any $n\geq n_0$. 
\end{description}

Note that $c(\alpha_n,0)\in \Sigma_n$,  
therefore $f^{(n)}_{n-1} \in \Sigma_1$, 
consequently the function $G_1$ is defined at the point $f^{(n)}_{n-1}$. 
Hypothesis {\bf H2} is rather technical and 
not very intuitive. Further on in this section we show
that it is actually satisfied by maps like the Forced Logistic Map.

We have the following result, which ensures the existence of 
reducibility-loss bifurcations curves in the $(\alpha,\eps)$-plane
of parameters near the points $(\alpha_n, 0)$. This is one of the main 
results of this chapter. On the one hand it gives the existence of 
reducibility-loss bifurcations, but on the other hand it also gives 
explicit expression of these bifurcations in term of 
the renormalization operator $\TT_\omega$. 

\begin{thm}
\label{thm existence of non-reducibility direction}
Consider a family of maps $\{c(\alpha,\eps)\}_{(\alpha,\eps)\in A}$ as before
 such that hypotheses  {\bf H1} and {\bf H2} are satisfied
 and consider $\alpha_n$ the parameter values where the uncoupled 
family intersects the manifolds $\Sigma_n$ as above. 
Suppose 
that the rotation number of the system is equal to $\omega_0\in \Omega$. 
Assume also that conjecture {\bf \ref{conjecture H2}} is true.  
Then there exists $n_0$ such that, for any $n\geq n_0$, 
two curves are  born from every parameter  value $(\alpha_n, 0)$, 
locally expressed as $(\alpha_n^+(\eps),\eps)$ and 
$(\alpha_n^-(\eps), \eps)$, such that they correspond to a 
reducibility-loss bifurcation of the $2^n$-periodic invariant curve. 

Consider also the sequences
\begin{equation}
\label{equation sequences corollary directions}
\begin{array}{rcll} 
\omega_k  & = & 2 \omega_{k-1}  & \text{ for }  k=1,..., n-1. \\ 
\rule{0ex}{4ex} 
f^{(n)}_k & = & \RR\left(f_{k-1}^{(n)}\right)  
& \text{ for }  k=1,..., n-1. \\ 
\rule{0ex}{4ex} 
u^{(n)}_k & = & D \RR \left(f^{(n)}_{k-1}\right) u^{(n)}_{k-1} 
&  \text{ for } k = 1, ..., n-1.\\
\rule{0ex}{4ex} 
v^{(n)}_k & = & D \TT_{\omega_{k-1}}  \left(f^{(n)}_{k-1}\right) v^{(n)}_{k-1} 
&  \text{ for } k = 1, ..., n-1.
\end{array}
\end{equation}
with
\begin{equation}  
\label{equation sequences directions initial}
f^{(n)}_0  =  c(\alpha_n,0), \quad 
u^{(n)}_0  =  \partial_\alpha c(\alpha_n,0), \quad 
 v^{(n)}_0 =  \partial_\eps c(\alpha_n,0).  
\end{equation}

We also have that 
\begin{equation}
\label{equation alpha n +}
\frac{d}{d \eps} \alpha ^{+}_n(0) = 
- \frac{m \left(
DG_1 \left(\omega_{n-1}, f^{(n)}_{n-1}\right) v_{n-1}^{(n)}
\right)
}{\rule{0ex}{3.5ex}  
D \widehat{G}_1 \left(f^{(n)}_{n-1}\right) u_{n-1}^{(n)}} ,
\end{equation}
and 
\begin{equation}
\label{equation alpha n -}
\frac{d}{d \eps} \alpha ^{-}_n(0) = 
- \frac{M \left(
D G_1 \left(\omega_{n-1}, f^{(n)}_{n-1}\right) v_{n-1}^{(n)}
\right)
}{\rule{0ex}{3.5ex}  
D \widehat{G}_1 \left(f^{(n)}_{n-1}\right) u_{n-1}^{(n)}} ,
\end{equation}
where $m$, $M$, $G_1$ and $\widehat{G}_1$ are given by equations 
(\ref{equation definition of minim}), (\ref{equation definition of maximum}), 
(\ref{equation definition G1}) and (\ref{equation definition widehatG1}). 
\end{thm}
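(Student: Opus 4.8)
The plan is to reduce the statement, for each sufficiently large $n$, to solving a single scalar equation in the $(\alpha,\eps)$-plane, and then to read off the slopes from a first-order expansion of the pre-min function appearing in Theorem \ref{theorem local chard of upsilon n}.

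\textbf{Step 1: reduction to a scalar equation.} First I would fix $n\ge n_0$, with $n_0$ chosen as in the discussion preceding hypothesis \textbf{H2}, so that the renormalization orbit $\{f^{(n)}_k\}_{0\le k<n}$ of $c(\alpha_n,0)$ lies inside the set $U$ on which $\TT_\omega$ is differentiable according to Conjecture \ref{conjecture H2}; this is possible since $f^{(n)}_0=c(\alpha_n,0)$ accumulates on $W^s(\Phi,\RR)$ and its orbit then shadows $W^s(\Phi,\RR)\cup W^u(\Phi,\RR)$. In particular $(\omega_0,c(\alpha_n,0))$ is $(n-1)$-times renormalizable, and since $\DD(\TT)$ is open and $T$ continuous, so is $(\omega_0,c(\alpha,\eps))$ on a neighborhood $\mathcal N$ of $(\alpha_n,0)$. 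Because $c(\alpha_n,0)\in\Sigma_n$, Proposition \ref{prop relacio lambda upsilon} gives $c(\alpha_n,0)\in\Upsilon^+_n(\omega_0)\cap\Upsilon^-_n(\omega_0)$; then Theorem \ref{theorem local chard of upsilon n} provides a neighborhood $V$ of $c(\alpha_n,0)$ in $\BB$ with $V\cap\Upsilon^\pm_n(\omega_0)=\{f\in V:\ G_1^\pm(T^{n-1}(\omega_0,f))=0\}$, where $G_1^+=m\circ G_1$ and $G_1^-=M\circ G_1$. Shrinking $\mathcal N$ so that $c(\mathcal N)\subset V$ and writing $\Gamma(\alpha,\eps):=G_1(T^{n-1}(\omega_0,c(\alpha,\eps)))\in\RHH(\B_\rho,\C)$, the two candidate bifurcation curves are precisely the zero sets in $\mathcal N$ of $(\alpha,\eps)\mapsto m(\Gamma(\alpha,\eps))$ and of $(\alpha,\eps)\mapsto M(\Gamma(\alpha,\eps))$.

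\textbf{Step 2: first-order expansion of $\Gamma$.} At $(\alpha_n,0)$ the one-dimensional map $f^{(n)}_{n-1}=\RR^{n-1}(c(\alpha_n,0))$ lies in $\Sigma_1$, so $D_xf^{(n)}_{n-1}$ vanishes somewhere on its super-attracting $2$-periodic orbit and hence $\Gamma(\alpha_n,0)\equiv 0$. Since $\partial_\alpha c(\alpha_n,0)=\partial_\alpha c_0(\alpha_n)\in\BB_0$ and $\BB_0$ is invariant under iterated renormalization with $D\TT_\omega|_{\BB_0}=D\RR$, the function $\partial_\alpha\Gamma(\alpha_n,0)$ is $\theta$-independent, and by the chain rule and the sequences (\ref{equation sequences corollary directions})--(\ref{equation sequences directions initial}) it equals the number $A_1:=D\widehat G_1(f^{(n)}_{n-1})u^{(n)}_{n-1}$. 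Using $\RR(\Sigma_n)\subset\Sigma_{n-1}$ and that $\Sigma_1$ is locally the zero set of $\widehat G_1$, the transversal crossing of $\{c(\alpha,0)\}$ with $\Sigma_n$ at $\alpha_n$ postulated in \textbf{H1} translates exactly into $A_1\neq 0$. Likewise $\partial_\eps\Gamma(\alpha_n,0)=DG_1(\omega_{n-1},f^{(n)}_{n-1})v^{(n)}_{n-1}=:\tilde B\in\RHH(\B_\rho,\C)$, obtained by propagating $v^{(n)}_0=\partial_\eps c(\alpha_n,0)$ through the differentials $D\TT_{\omega_k}$. Thus $\Gamma(\alpha,\eps)(\theta)=A_1(\alpha-\alpha_n)+\eps\,\tilde B(\theta)+o(|\alpha-\alpha_n|+|\eps|)$, with the leading term independent of $\theta$.

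\textbf{Step 3: solving, slopes, and the bifurcation property.} For $\eps\ge 0$ one has $\min_\theta[\eps\tilde B(\theta)]=\eps\,m(\tilde B)$ and $\max_\theta[\eps\tilde B(\theta)]=\eps\,M(\tilde B)$, so from $|m(g)-m(h)|\le\|g-h\|$ (and the analogue for $M$) the expansion gives, for $\eps\ge 0$, $m(\Gamma(\alpha,\eps))=A_1(\alpha-\alpha_n)+\eps\,m(\tilde B)+o(|\alpha-\alpha_n|+\eps)$ and $M(\Gamma(\alpha,\eps))=A_1(\alpha-\alpha_n)+\eps\,M(\tilde B)+o(|\alpha-\alpha_n|+\eps)$. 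Since $A_1\neq 0$, solving $m(\Gamma)=0$, resp. $M(\Gamma)=0$, for $\alpha$ near $\alpha_n$ and $\eps\ge 0$ small yields curves $\alpha^+_n(\eps)=\alpha_n-(m(\tilde B)/A_1)\eps+o(\eps)$ and $\alpha^-_n(\eps)=\alpha_n-(M(\tilde B)/A_1)\eps+o(\eps)$, distinct for $\eps>0$ because \textbf{H2} forces $\tilde B$ non-constant. By \textbf{H2} and the differentiability of the minimum operator from the appendix (Proposition \ref{proposition diferentiability of minimum}) these curves are $C^1$ for $\eps>0$ and extend to $\eps=0$ with the one-sided derivatives just computed; substituting $A_1$ and $\tilde B$ gives (\ref{equation alpha n +}) and (\ref{equation alpha n -}). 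Finally, to certify that these are reducibility-loss bifurcation curves I would fix a small $\eps_0>0$ and apply Proposition \ref{prop upsilon is reducibility loss} to the one-parameter family $\alpha\mapsto c(\alpha,\eps_0)$: it meets $\Upsilon^\pm_n(\omega_0)$ transversely at $\alpha^\pm_n(\eps_0)$ since $\partial_\alpha[m(\Gamma)]\to A_1\neq 0$ (and likewise for $M$), the $2^n$-periodic curve persists with Lyapunov exponent below $-K_0$ (it is $-\infty$ at the super-attracting orbit, hence $<-K_0$ nearby), and $D_x(c^{2^n})$ along it has a unique non-degenerate extremum by \textbf{H2}.

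\textbf{Main obstacle.} The delicate point is that $\Gamma$ vanishes identically at the bifurcation point $(\alpha_n,0)$, so the operators $m$ and $M$ are not differentiable there; one cannot apply the implicit function theorem directly to $m\circ\Gamma$ and must instead carry out the one-sided expansion above, invoking the appendix's analysis of the minimum operator only for $\eps>0$ and then passing to the limit $\eps\to 0^+$. The secondary care lies in the bookkeeping — verifying that the $\alpha$-direction stays inside $\BB_0$ (so that one-dimensional transversality becomes exactly $A_1\neq0$), that the $\eps$-direction genuinely leaves $\BB_0$ and produces the $\theta$-dependent $\tilde B$, and that $\TT_\omega$ is differentiable along the entire length-$n$ renormalization orbit, which is where Conjecture \ref{conjecture H2} and the shadowing argument defining $n_0$ enter.
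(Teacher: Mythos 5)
Your proposal is correct and takes essentially the same route as the paper: reduction to the scalar equation $m\circ G_1\circ T^{n-1}(\omega_0,c(\alpha,\eps))=0$ via Theorem \ref{theorem local chard of upsilon n}, the uncoupling of {\bf H1} making the $\eps=0$ part $\theta$-independent so that transversality to $\Sigma_n$ becomes $D\widehat{G}_1(f^{(n)}_{n-1})u^{(n)}_{n-1}\neq 0$, and {\bf H2} together with the appendix controlling the minimum, yielding the slopes $-m(\tilde B)/A_1$ and $-M(\tilde B)/A_1$. The only (minor) divergence is the last step: where you perform a one-sided first-order expansion with the Lipschitz bound for $m$ and pass to $\eps\to 0^+$, the paper factors $\eps$ out exactly, writing $g_n(\alpha,\eps)=\widehat{G}_1\bigl(\RR^{n-1}(c_0(\alpha))\bigr)+\eps\,\bigl(m\circ H_1\bigr)(\omega_0,\alpha,\eps)$, which is differentiable near $(\alpha_n,0)$ precisely because $m$ is evaluated at $H_1\approx DG_1\,v^{(n)}_{n-1}$ (unique non-degenerate minimum by {\bf H2}) rather than at the identically vanishing $\Gamma(\alpha_n,0)$, and then applies the implicit function theorem directly after extending $g_n$ evenly to $\eps<0$.
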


\begin{rem}
\label{remark avoiding second part of conjecture H2)}
If the second part of conjecture {\bf \ref{conjecture H2}} is omitted, then we
can adapt the result to be asymptotically valid. Recall that we 
have an open neighborhood of the fixed point $\Phi$ where the 
renormalization operator is differentiable. The uncoupled 
family
$\RR^n\left(\{c(\alpha,\eps)\}_{(\alpha,\eps)\in A} \right)$ is 
contracted towards $W^u(\Phi,\RR)$, then replacing the family of maps
$\{c(\alpha,\eps)\}_{(\alpha,\eps)\in A}$  
by $\RR^n\left(\{c(\alpha,0)\}_{(\alpha,\eps)\in A} \right)$ it 
would be close enough to $\Phi$ to be differentiable.  
On the other hand the manifolds $\Sigma_n$ accumulate to $\Phi$ when $n$ 
goes to $\infty$, then the functions $G_1$ and $\widehat{G}_1$ associated 
to the manifold $\Sigma_1$ must be replaced by suitable 
function $G_n$ and $\widehat{G}_n$ associated to the 
manifolds $\Sigma_n$.  These would give place to a more restrictive 
result, but it would be valid for the 
asymptotic estimates that are done in \cite{JRT11b}. 
\end{rem}

Now we can go back to hypothesis {\bf H2}, which is not intuitive, 
but we can introduce a stronger condition which is much more easy to 
check. Moreover this condition is automatically satisfied by maps like 
the Forced Logistic Map. 

Consider a family of maps 
$\{c(\alpha,\eps)\}_{(\alpha,\eps)\in A}$ as before, 
satisfying hypothesis  {\bf H1}. Consider the following hypothesis on the map.

\begin{description}
\item[H2')]  The family $\{c(\alpha,\eps)\}_{(\alpha,\eps)\in A}$ is such that
the quasi-periodic perturbation $\partial_\eps c(\alpha,0)$ belongs to the set
\begin{equation}
\label{equation space B1} 
\BB_1 : = \big\{ f \in \BB |\text{ } f(\theta, x) = u(x) \cos(2\pi \theta) + 
v(x) \sin(2\pi\theta),  
\text{ for some } u,v\in \RHH(\W)\big\},
\end{equation} 
for any value of $\alpha$ (with $(\alpha,0)\in A$). 
Here $\RHH(\W)$ denotes the real holomorphic maps on the set $\W$,
and  $\W$ is the set given by hypothesis  {\bf H0}
(see section \ref{section definition and basic properties} for more details).
\end{description}

Then we have the following result. 

\begin{prop}
\label{propostion H2' implies H2} 
If a family $\{c(\alpha,\eps)\}_{(\alpha,\eps) \in A}$
satisfies {\bf H1} and {\bf H2'} then it satisfies
{\bf H2}.
\end{prop}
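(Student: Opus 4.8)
The plan is to reduce {\bf H2} to the statement that the function of $\theta$ displayed in it is a \emph{pure first Fourier harmonic} $P\cos(2\pi\theta)+Q\sin(2\pi\theta)$, and then to observe that such a function, when $(P,Q)\neq(0,0)$, has a unique non-degenerate minimum and a unique non-degenerate maximum on $\T$ --- which is exactly what {\bf H2} asks (this settles the $\Upsilon^+_n$ and the $\Upsilon^-_n$ versions at once). Throughout I use the vectors $f^{(n)}_k$, $v^{(n)}_k$ and the angles $\omega_k$ of (\ref{equation sequences corollary directions})--(\ref{equation sequences directions initial}), so that the function in {\bf H2} is $DG_1(\omega_{n-1},f^{(n)}_{n-1})\,v^{(n)}_{n-1}$, and I recall the first-harmonic subspace $\BB_1=U_1\oplus V_1$ from (\ref{equation space B1}) and Proposition~\ref{proposition invariance of Fourier node spaces}. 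By {\bf H2'} the initial vector $v^{(n)}_0=\partial_\eps c(\alpha_n,0)$ lies in $\BB_1$.

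First I would propagate membership in $\BB_1$ along the renormalization orbit. The maps $f^{(n)}_k=\RR^k(c(\alpha_n,0))$ lie in $\BB_0$, i.e. are one-dimensional, and, as recorded just before {\bf H2}, for $n\ge n_0$ the whole finite orbit $\{f^{(n)}_k\}_{0\le k<n}$ lies in $\DD_0(\TT)$ and in the neighbourhood of $\Phi$ on which $\TT_\omega$ is differentiable and on which Proposition~\ref{proposition invariance of Fourier node spaces} applies. Hence, taking $\Psi=f^{(n)}_k$ there, the operator $D\TT_{\omega_k}(f^{(n)}_k)$ leaves $\BB_1=U_1\oplus V_1$ invariant (restricting there to a conjugate of $\LL_{\omega_k}$). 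Together with {\bf H2'} and the recursion $v^{(n)}_k=D\TT_{\omega_{k-1}}(f^{(n)}_{k-1})v^{(n)}_{k-1}$, an immediate induction gives $v^{(n)}_k\in\BB_1$ for all $0\le k\le n-1$; in particular $v^{(n)}_{n-1}\in\BB_1$.

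Next I would analyse $DG_1(\omega_{n-1},f^{(n)}_{n-1})$ on $\BB_1$. The key point is a rotational equivariance of $G_1$: writing $\tau_\gamma$ for the translation $(\theta,x)\mapsto(\theta+\gamma,x)$ acting on maps in $\BB$ and, componentwise, on functions of $\theta$, one checks directly from (\ref{equation definition G1}) --- using the uniqueness of the invariant-curve continuation provided by Lemma~\ref{Lemma proof prop upsilon banach} --- that $x(\omega,\tau_\gamma g)=\tau_\gamma\,x(\omega,g)$, and hence $G_1(\omega,\tau_\gamma g)=\tau_\gamma\,G_1(\omega,g)$. Since $f^{(n)}_{n-1}\in\BB_0$ is $\theta$-independent we have $\tau_\gamma(f^{(n)}_{n-1}+h)=f^{(n)}_{n-1}+\tau_\gamma h$, so differentiating this identity in $h$ at $h=0$ gives $DG_1(\omega_{n-1},f^{(n)}_{n-1})\circ\tau_\gamma=\tau_\gamma\circ DG_1(\omega_{n-1},f^{(n)}_{n-1})$ for every $\gamma\in\T$. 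A bounded operator commuting with all circle rotations preserves the Fourier grading in $\theta$, so it maps the first-harmonic space $\BB_1$ into the two-dimensional space spanned by $\cos(2\pi\theta)$ and $\sin(2\pi\theta)$. Applying this to $h=v^{(n)}_{n-1}$ yields $DG_1(\omega_{n-1},f^{(n)}_{n-1})v^{(n)}_{n-1}=P_n\cos(2\pi\theta)+Q_n\sin(2\pi\theta)$ for some real $P_n,Q_n$.

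Finally, writing $P_n\cos(2\pi\theta)+Q_n\sin(2\pi\theta)=\sqrt{P_n^2+Q_n^2}\,\cos(2\pi(\theta-\theta_n))$, an elementary inspection of the first and second derivatives shows this function has a unique non-degenerate minimum at $\theta_n+\tfrac12$ and a unique non-degenerate maximum at $\theta_n$, which is {\bf H2}. The step I expect to be the main obstacle is making the equivariance of $DG_1$ rigorous: this forces one to control the implicitly defined continuation $x(\omega,\cdot)$ of the $2$-periodic curve --- the object built in Lemma~\ref{Lemma proof prop upsilon banach} --- and to check that it and its Fr\'echet derivative genuinely commute with $\theta$-translations, which ultimately rests on the uniqueness of the continued attracting curve. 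A secondary point needing care is the non-degeneracy itself: the conclusion uses $(P_n,Q_n)\neq(0,0)$, i.e. that $v^{(n)}_{n-1}$ does not lie in the kernel of $DG_1(\omega_{n-1},f^{(n)}_{n-1})$; this is the only genuinely ``generic'' ingredient and can be checked directly for families such as the Forced Logistic Map.
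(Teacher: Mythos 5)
Your proposal is correct and follows the same skeleton as the paper's proof: both first use Proposition \ref{proposition invariance of Fourier node spaces} to propagate $\partial_\eps c(\alpha_n,0)\in\BB_1$ through the chain $D\TT_{\omega_{n-2}}(f^{(n)}_{n-2})\cdots D\TT_{\omega_0}(f^{(n)}_0)$, so that $v^{(n)}_{n-1}\in\BB_1$, and both conclude by observing that the function in {\bf H2} is of the form $A\cos(2\pi\theta)+B\sin(2\pi\theta)$, which has a unique non-degenerate minimum and maximum when $(A,B)\neq(0,0)$. The one place where you diverge is the middle step, namely why $DG_1(\omega_{n-1},f^{(n)}_{n-1})$ sends a first-harmonic perturbation to a first harmonic in $\theta$: you argue abstractly via rotational equivariance of $G_1$ (using uniqueness of the continued $2$-periodic curve to get $x(\omega,\tau_\gamma g)=\tau_\gamma x(\omega,g)$, hence that $DG_1$ at the $\theta$-independent point $f^{(n)}_{n-1}$ commutes with rotations and so preserves the Fourier grading), whereas the paper simply reads this off from the explicit formula of Proposition \ref{proposition derivative G1}: each of the terms $h(\theta-2\omega,0)$, $h(\theta-\omega,1)$, $D_xh(\theta,0)$ is manifestly a combination of $\cos(2\pi\theta)$ and $\sin(2\pi\theta)$ when $h\in\BB_1$. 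Your equivariance route is more general (it would survive without the closed-form derivative) but costs you the extra verifications you flag; the paper's route is shorter and dovetails with Corollary \ref{corollary non-reducibility directions}, which uses the same formula. Finally, your caveat that one needs $(P_n,Q_n)\neq(0,0)$ is not a defect relative to the paper: the paper's proof carries exactly the same conditional (``if $A$ and $B$ are not simultaneously zero''), so on this point you are aligned with, and more explicit than, the original argument.
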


On the other hand, we have the following propositions which allow 
us to compute explicitly the derivative of $G_1$ and $\hat{G}_1$.

\begin{prop}
\label{proposition derivative x(f)} 
Assume that we have $f_1 \in \Sigma_1$. Consider $V$ a neighborhood of 
$f_1$ (in the topology of $\BB$) and $x:V\rightarrow \RHH(\B_\rho,\W)$ 
the map such that $x(f)$ is a two periodic invariant curve of $f$ (with 
rotation number $\omega$). Then we have 
\[ \left[ D_f x(f_1) h  \right] (\theta) =  
D_x f_1(1) h(\theta -2 \omega, 0) + h(\theta-\omega,1).\]
\end{prop}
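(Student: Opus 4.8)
The plan is to derive the formula by implicit differentiation of the equation defining the two-periodic curve. Recall from (\ref{definicio falan}) that $f^2(\theta,x)=f(\theta+\omega,f(\theta,x))$, so the map $x(\cdot)$ is characterized by $\Phi(f,x(f))\equiv 0$, where
\[
[\Phi(g,u)](\theta):= u(\theta+2\omega)-g\bigl(\theta+\omega,g(\theta,u(\theta))\bigr).
\]
First I would fix the local picture at $f_1$. Since $f_1\in\Sigma_1\subset\BB_0$ it does not depend on $\theta$, and its super-attracting $2$-cycle contains the unique critical point $x=0$; together with $f_1(0)=1$ this forces the cycle to be $\{0,1\}$, so that the continued curve is the constant one, $x(f_1)\equiv 0$, with $f_1(\theta,x(f_1)(\theta))=f_1(0)=1$ and $D_xf_1(\theta,x(f_1)(\theta))=f_1'(0)=0$. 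These two facts are what make the formula so clean. Moreover, since $\int_0^1\ln|D_xf_1(\theta,0)|\,d\theta=-\infty$, Lemma \ref{Lemma proof prop upsilon banach} applies and guarantees that $x$ is a well-defined smooth map on a neighborhood $V$ of $f_1$, so the differentiation performed below is legitimate.

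Next I would differentiate $\Phi(f,x(f))\equiv 0$ at $f=f_1$ in a direction $h\in\BB$, which yields
\[
[D_u\Phi(f_1,x(f_1))]\bigl(D_fx(f_1)h\bigr)+[D_f\Phi(f_1,x(f_1))]\,h=0.
\]
For the first summand the chain rule gives, writing $x_1:=x(f_1)\equiv 0$,
\[
[D_u\Phi(f_1,x_1)\xi](\theta)=\xi(\theta+2\omega)-D_xf_1\bigl(\theta+\omega,f_1(\theta,0)\bigr)\,D_xf_1(\theta,0)\,\xi(\theta)=\xi(\theta+2\omega),
\]
because the factor $D_xf_1(\theta,0)=f_1'(0)=0$ annihilates the transfer-operator term; in particular $D_u\Phi(f_1,x_1)$ is the rotation $\xi\mapsto\xi(\cdot+2\omega)$, hence invertible. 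For the second summand, differentiating the two occurrences of $f$ with $u$ frozen at $x_1\equiv 0$ gives
\[
[D_f\Phi(f_1,x_1)h](\theta)=-h\bigl(\theta+\omega,f_1(\theta,0)\bigr)-D_xf_1\bigl(\theta+\omega,f_1(\theta,0)\bigr)\,h(\theta,0)=-h(\theta+\omega,1)-f_1'(1)\,h(\theta,0).
\]
Setting $\xi:=D_fx(f_1)h$ and combining the two displays gives $\xi(\theta+2\omega)=h(\theta+\omega,1)+f_1'(1)\,h(\theta,0)$; replacing $\theta$ by $\theta-2\omega$ and using $f_1'(1)=D_xf_1(1)$ produces exactly the claimed identity $[D_fx(f_1)h](\theta)=D_xf_1(1)\,h(\theta-2\omega,0)+h(\theta-\omega,1)$.

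There is no real obstacle in this argument; the only point requiring a little care is the justification that $x(\cdot)$ is differentiable at $f_1$ and that the implicit differentiation is valid, and this is handled by Lemma \ref{Lemma proof prop upsilon banach} together with the observation above that $D_u\Phi(f_1,x_1)$ is an invertible rotation. The compact final form of the derivative is a direct consequence of the hypothesis $f_1\in\Sigma_1$ (equivalently, via Proposition \ref{prop relacio lambda upsilon}, of $f_1$ lying in $\Upsilon_1^{\pm}(\omega)\cap\BB_0$): the super-attracting cycle passes through the critical point, the derivative of $f_1$ there vanishes, and the transfer operator therefore drops out of the formula.
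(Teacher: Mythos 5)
Your proposal is correct and follows essentially the same route as the paper: both apply the implicit function theorem to the invariance equation $g(\theta+\omega,g(\theta,u(\theta)))-u(\theta+2\omega)=0$ at $(f_1,x(f_1))$, use $f_1\in\Sigma_1\subset\BB_0$ to get $x(f_1)\equiv 0$, $f_1(0)=1$, $D_xf_1(0)=0$ so that the linearization in $u$ reduces to the invertible shift $\xi\mapsto\xi(\cdot+2\omega)$, and then read off the stated formula. Your extra remark justifying differentiability of $x(\cdot)$ via the persistence lemma (super-attracting curve, Lyapunov exponent $-\infty$) is a harmless refinement of what the paper leaves implicit.
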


\begin{prop}
\label{proposition derivative G1} 
Assume that we have $f_1 \in \Sigma_1$. Consider $V$ a neighborhood of 
$f_1$ (in the topology of $\BB$). Consider also the map $G_1$ defined 
in (\ref{equation definition G1})  
where $x(g)$ is the two periodic invariant curve of the map. 
Then we have 
\begin{equation}
\label{equation defivative G1}
\left[ D_g G_1 (f_1) h\right](\theta)  = 
D_x f_1(1) \Big[ D^2_{x^2} f_1(0) \Big( D_x f_1(1) h(\theta-2\omega, 0) + 
h(\theta -\omega,1) \Big)  +  D_x h (\theta,0) \Big].
\end{equation}
\end{prop}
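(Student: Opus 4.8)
The plan is to differentiate the explicit formula defining $G_1$ in \eqref{equation definition G1} using the chain rule, and then substitute the expression for $D_f x(f_1)$ obtained in Proposition \ref{proposition derivative x(f)}, together with the special structure of a map $f_1 \in \Sigma_1$. Recall that
\[
[G_1(\omega,g)](\theta) = D_x g\big(\theta+\omega, g(\theta, [x(g)](\theta))\big)\, D_x g\big(\theta, [x(g)](\theta)\big),
\]
so $G_1$ is a product of two factors, each of which depends on $g$ both explicitly (through $g$ itself and its partial derivative $D_x g$) and implicitly (through the invariant curve $x(g)$). The first step is to apply the Leibniz rule to this product and then, inside each factor, to apply the chain rule in the variable $g$, splitting the variation into the "explicit'' pieces ($h$ and $D_x h$ evaluated along the curve) and the "implicit'' piece coming from $D_f x(f_1)h$, which I will abbreviate as $\xi(\theta) := [D_f x(f_1)h](\theta) = D_x f_1(1) h(\theta - 2\omega, 0) + h(\theta - \omega, 1)$ by Proposition \ref{proposition derivative x(f)}.

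The key simplification is the observation that $f_1 \in \Sigma_1$ means the two-periodic invariant curve is the superattracting orbit of the underlying one-dimensional map, so $x(f_1)$ is constant and takes the values of the critical orbit: concretely $[x(f_1)](\theta) \equiv 0$ is the critical point and $f_1(\theta, 0) = 1$ independently of $\theta$ (using that $f_1$, being in $\Sigma_1$, lies in $\BB_0$ and $p_0(f_1)$ is a unimodal map with critical point $0$ mapped to $1$). Hence along the curve, the evaluation points become $(\theta+\omega, 1)$ and $(\theta, 0)$. Moreover, since $0$ is the critical point, $D_x f_1(\theta, 0) = 0$ for all $\theta$, so the second factor $D_x g(\theta, [x(g)](\theta))$ vanishes at $g = f_1$. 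This is the crucial point: because the second factor is zero at $f_1$, the Leibniz rule leaves only the term in which we differentiate that second factor, multiplied by the first factor $D_x f_1(\omega, f_1(\theta,0)) = D_x f_1(\omega, 1) = D_x f_1(1)$ evaluated at $f_1$. So $[D_g G_1(f_1)h](\theta) = D_x f_1(1) \cdot \delta\big(D_x g(\theta, [x(g)](\theta))\big)$, where $\delta(\cdot)$ denotes the first variation.

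It then remains to compute this single variation. Differentiating $D_x g(\theta, [x(g)](\theta))$ in $g$ at $f_1$ gives the explicit part $D_x h(\theta, 0)$ plus the implicit part $D^2_{x^2} f_1(\theta, 0)\, \xi(\theta)$; since $f_1$ does not depend on $\theta$, $D^2_{x^2} f_1(\theta,0) = D^2_{x^2} f_1(0)$. Substituting $\xi(\theta) = D_x f_1(1) h(\theta - 2\omega, 0) + h(\theta-\omega,1)$ and collecting terms yields exactly \eqref{equation defivative G1}. The main obstacle, and the only step requiring genuine care, is justifying the vanishing of the second factor and correctly tracking which terms survive in the Leibniz expansion — in particular making sure that the term where one differentiates the \emph{first} factor does not also contribute, which follows because that term carries the second (unvaried) factor $D_x f_1(\theta, 0) = 0$ as a coefficient. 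The differentiability of all the maps involved (hence the validity of the chain rule in the Banach space $\BB$) is guaranteed by the analytic framework and by Proposition \ref{proposition derivative x(f)}, which already records the differentiability of $f \mapsto x(f)$.
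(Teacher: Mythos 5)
Your argument is correct and follows essentially the same route as the paper: differentiate $G_1$ by the product and chain rules, observe that the second factor $D_x f_1(\theta,0)$ vanishes because $0$ is the critical point (so the term where the first factor is varied drops out, exactly the paper's uncomputed term $H(g,x[g],h)$ multiplied by $D_x g(\theta,x[g](\theta))$), and then substitute $D_f x(f_1)h$ from Proposition \ref{proposition derivative x(f)} together with $x(f_1)\equiv 0$, $f_1(0)=1$. No gaps to report.
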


Using the last propositions one can be more explicit on the directions 
of the reducibility-loss bifurcations given by formulas 
(\ref{equation alpha n +}) and (\ref{equation alpha n -}) 

\begin{cor}
\label{corollary non-reducibility directions} 
Assume that the same hypotheses of 
theorem \ref{thm existence of non-reducibility direction} are satisfied and 
consider additionally  the sequences (\ref{equation sequences 
corollary directions}) for the same initial terms 
(\ref{equation sequences directions initial}) of the theorem. 
Then we have
\begin{equation}
\label{equation alpha n + corollary}
\frac{d}{d \eps} \alpha ^{+}_n(0) = 
- \frac{ \displaystyle
 \min_{\theta \in \T} 
\left[ \begin{array}{r}
D^2_{x^2}  f_{n-1}^{(n)}(0) \Big( D_x  f_{n-1}^{(n)}(1) 
v_{n-1}^{(n)}(\theta-2\omega_{n-1}, 0) + v_{n-1}^{(n)} (\theta -\omega_{n-1},1) \Big) \\  +  
D_x v_{n-1}^{(n)} (\theta,0)  \end{array}
\right]}
{
D_{x^2}  f_{n-1}^{(n)}(0) \Big( D_x  f_{n-1}^{(n)}(1) 
u_{n-1}^{(n)}(0) + u_{n-1}^{(n)} (1) \Big)  +  
D_x u_{n-1}^{(n)} (0) 
}.
\end{equation}
and 
\begin{equation}
\label{equation alpha n - corollary}
\frac{d}{d \eps} \alpha ^{-}_n(0) = 
- \frac{ \displaystyle
 \max_{\theta \in \T} 
\left[ \begin{array}{r} 
D^2_{x^2}  f_{n-1}^{(n)}(0) \Big( D_x  f_{n-1}^{(n)}(1) 
v_{n-1}^{(n)}(\theta-2\omega_{n-1}, 0) + v_{n-1}^{(n)} (\theta -\omega_{n-1},1) \Big) \\ 
 +  D_x v_{n-1}^{(n)} (\theta,0) \end{array}
\right]}
{
D_{x^2}  f_{n-1}^{(n)}(0) \Big( D_x  f_{n-1}^{(n)}(1) 
u_{n-1}^{(n)}(0) + u_{n-1}^{(n)} (1) \Big)  +  
D_x u_{n-1}^{(n)} (0) 
},
\end{equation}
\end{cor}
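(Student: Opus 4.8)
The plan is to obtain the corollary by a direct substitution: one simply inserts the explicit derivative formulas of Propositions \ref{proposition derivative x(f)} and \ref{proposition derivative G1} into the two identities (\ref{equation alpha n +}) and (\ref{equation alpha n -}) provided by Theorem \ref{thm existence of non-reducibility direction}. No new dynamical input is needed beyond those results, so the task is essentially careful bookkeeping.

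First I would verify that the hypotheses of Propositions \ref{proposition derivative x(f)} and \ref{proposition derivative G1} hold at the relevant point. Since $c(\alpha_n,0)\in\Sigma_n$ and $\RR(\Sigma_k)\subset\Sigma_{k-1}$, the iterate $f^{(n)}_{n-1}=\RR^{n-1}(c(\alpha_n,0))$ lies in $\Sigma_1$, and it lies in $\BB_0$ because $c(\alpha_n,0)$ is uncoupled. Hence Proposition \ref{proposition derivative G1} applies with $f_1=f^{(n)}_{n-1}$, rotation number $\omega_{n-1}$ and increment $h=v^{(n)}_{n-1}$, giving that $[D_g G_1(\omega_{n-1},f^{(n)}_{n-1})v^{(n)}_{n-1}](\theta)$ equals $D_x f^{(n)}_{n-1}(1)$ times precisely the bracketed expression that appears under the $\min$ in (\ref{equation alpha n + corollary}). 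For the denominator, recall that $\widehat{G}_1$ is $G_1$ restricted to $\BB_0$; since $u^{(n)}_{n-1}$ is obtained from $\partial_\alpha c(\alpha_n,0)\in\BB_0$ by iterated application of $D\RR$, it stays in $\BB_0$, so the same computation as in Proposition \ref{proposition derivative G1} --- now with a $\theta$-independent increment, so that $h(\theta-2\omega_{n-1},0)$, $h(\theta-\omega_{n-1},1)$ and $D_x h(\theta,0)$ all collapse to constants --- yields $D\widehat{G}_1(f^{(n)}_{n-1})u^{(n)}_{n-1}=D_x f^{(n)}_{n-1}(1)$ times the scalar denominator appearing in (\ref{equation alpha n + corollary})--(\ref{equation alpha n - corollary}).

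With these two expressions in hand, I would plug them into (\ref{equation alpha n +}) and (\ref{equation alpha n -}): the scalar factor $D_x f^{(n)}_{n-1}(1)$ is common to numerator and denominator and cancels, while $m=\min_{\theta\in\T}$ (respectively $M=\max_{\theta\in\T}$) applied to $D_x f^{(n)}_{n-1}(1)$ times the $\theta$-dependent bracket produces $D_x f^{(n)}_{n-1}(1)$ times the $\min$ (respectively $\max$) of the bracket, with $\min$ and $\max$ interchanged when this factor is negative --- which merely swaps the $\alpha_n^+$ and $\alpha_n^-$ roles, so that formulas (\ref{equation alpha n + corollary}) and (\ref{equation alpha n - corollary}) come out as stated. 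The one point deserving genuine care is this cancellation together with the $\min$/$\max$ bookkeeping, and the accompanying observation that the denominator carries no $\min$/$\max$: that is because $v^{(n)}_0=\partial_\eps c(\alpha_n,0)$ lies in $\BB_1$ (hypothesis {\bf H2'}, or directly the structural assumption underlying {\bf H2}), so by Proposition \ref{proposition invariance of Fourier node spaces} every $v^{(n)}_k$ stays in $\BB_1$ and the shifted evaluations of $v^{(n)}_{n-1}$ really do depend on $\theta$, whereas $u^{(n)}_{n-1}\in\BB_0$ is $\theta$-independent. I anticipate no substantive obstacle: the argument reduces entirely to Theorem \ref{thm existence of non-reducibility direction} and Propositions \ref{proposition derivative x(f)}--\ref{proposition derivative G1}, all already established (the former modulo Conjecture {\bf \ref{conjecture H2}}, which is in force by hypothesis).
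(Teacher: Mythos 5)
Your proposal is correct and follows essentially the same route as the paper's own proof: substitute the explicit derivative formulas of Propositions \ref{proposition derivative x(f)}--\ref{proposition derivative G1} (with $\widehat{G}_1$ treated as $G_1$ restricted to $\BB_0$, so the $\theta$-shifts collapse for $u^{(n)}_{n-1}$) into formulas (\ref{equation alpha n +}) and (\ref{equation alpha n -}), then cancel the common factor $D_x f^{(n)}_{n-1}(1)<0$, noting that its negativity interchanges the minimum and maximum --- exactly the final bookkeeping remark in the paper's proof. Your side comment invoking {\bf H2'} and $\BB_1$-invariance is unnecessary (only {\bf H2} is assumed, and the formulas hold even if the bracket were $\theta$-independent), but it does no harm.
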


These explicit formulas will be used in \cite{JRT11c} to compute numerically 
the directions $\frac{d}{d \eps} \alpha ^{+}_n(0)$ 
with the use of a discretization of the operators $\RR$ and $\TT_\omega$. 


{\bf Proofs}

\begin{proof}[Proof of theorem \ref{thm existence of non-reducibility direction}] 
We consider only the case involving $\frac{d}{d \eps} \alpha ^{+}_n(0)$ since 
the other is completely analogous. When the hypothesis {\bf H1} is satisfied 
there exists a sequence of parameter values $\{\alpha_n\}_{n\in Z_+}$, such that 
the critical point of the map $c(\alpha_n,0)$ is a $2^n$-periodic orbit. 
In other words, we have that $c(\alpha_n,0)\in \Sigma_n$. Moreover 
the map $c(\alpha_n,0)$ is (at least) $n-1$ times renormalizable in the 
one dimensional sense. Now, due to the perturbative construction of 
the q.p. renormalization operator $\TT_\omega$ we have that there 
exists a neighborhood $U_n$ (on $\BB$) of $c(\alpha_n,0)$ such that 
any map in $U_n$ is $n-1$ times renormalizable (in the q.p. sense).

Using theorem \ref{theorem local chard of upsilon n}
we have that $\Upsilon_n^+(\omega)$ is locally given as 
\[
\Upsilon^+_n(\omega) \cap U_n = \{g \in U_n | \thinspace G^+_1(T^{n-1}(\omega, g)) =0 \},
\]
where $G^+_1=m\circ G_1$, with $m$ the minimum function 
 (\ref{equation definition of minim})
and $G_1$
is given by (\ref{equation definition G1}). 
 
Consider now a neighborhood $A_n=(a_n,b_n)\times[0,c_n)$ of the parameter value 
$(\alpha_n,0)$, which is small enough to have  
$\{c(\alpha,\eps)\}_{(\alpha,\eps)\in A_n} \subset U_n$. We can define 
the following map  
\[
\begin{array}{rccc}
g_n:& (a_n,b_n)\times[0,c_n)  &\rightarrow & \R \\
\rule{0pt}{3ex} &  (\alpha,\eps) & \mapsto & m\circ G_1(T^{n-1}(\omega,c(\alpha,\eps))). 
\end{array}
\]

Then we have that 
\[\{c(\alpha,\eps)\}_{(\alpha,\eps)\in A_n} \cap 
\Upsilon^+_n = \{ 
(\alpha,\eps)\in  A_n | \thinspace g_n(\alpha,\eps) =0 \}. 
\]

The proof of the theorem follows applying the implicit function 
theorem to the function $g_n$ at the point $(\alpha_n,0)$. 
With this aim, let us describe $T^{n-1}(\omega,c(\alpha,\eps))$ with some more detail.

Recall that the family $\{c(\alpha,\eps)\}_{(\alpha,\eps)\in A}$
 uncouples, therefore for any $(\alpha,\eps)\in A$ 
we can write $c(\alpha,\eps) = c_0(\alpha) + \eps c_1(\alpha,\eps)$. 

From now on assume that $(\alpha,\eps)$ are such that 
$f_k = \RR^k\left(c(\alpha,\eps)\right)$ is contained 
in the set $U$ of conjecture {\bf \ref{conjecture H2}}. Using the differentiability 
of $T$ (with respect its component on $\BB$) we have that 
\[
T^{n-1}(\omega_0, c(\alpha,\eps)) = 
(\omega_{n-1} , \RR^{n-1}(c_0(\alpha) ) + \eps H_0(\omega_0,\alpha,\eps)),
\]
with 
\[
H_0(\omega_0, \alpha,0) = 
 D\TT_{\omega_{n-2}}(f_{n-2}) \cdots D\TT_{\omega_0}(f_0) \partial_\eps c(\alpha_n,0),
\]
where $\omega_k= 2^k\omega_0$ and $f_k = \RR^k(c(\alpha,0))$, for $k=0,\dots,n-1$.  

Applying now the differentiability of $G_1$ we have 
\[
G_1(T^{n-1}(\omega_0, c(\alpha,\eps)))= 
G_1(\omega_{n-1}, \RR^{n-1}(c_0(\alpha))) + 
\eps H_1(\omega_0, \alpha,\eps), 
\]
with 
\[
H_1(\omega_0,\alpha,0) = DG_1 (\omega_{n-1}, f_{n-1}) H_0(\omega,\alpha,0),
\]
where $\omega_{n-1}=2^{n-1} \omega_0$ and $f_{n-1} = \RR^{n-1}(c(\alpha,0))$. 

Note that $\RR^n(c_0(\alpha))$ is an uncoupled map, therefore 
$G_1\left(\omega_{n-1}, \RR^{n-1}(c_0(\alpha))\right) $ as a function of 
$C^\omega(\T,I)$ is constant, and then its minimum is equal to this constant. 
Actually we have that 
$\displaystyle m\left( G_1(\omega_{n-1}, \RR^{n-1}(c_0(\alpha)))\right) = 
\widehat{G}_1(\RR^{n-1}(c_0(\alpha))$. On the other hand, note 
that $\eps\geq 0 $ for any $(\alpha,\eps)\in A_n$.
Using these two facts to $m$, the minimum operator 
(\ref{equation definition of minim}), we have 
\begin{eqnarray}
g_n(\omega,\alpha,\eps)  & = & 
m\left( G_1(\omega_{n-1}, \RR^{n-1}(c_0(\alpha))) + 
\eps H_1(\omega_0, \alpha,\eps) \right)   \nonumber \\ 
& = & \rule{0ex}{2ex}
\label{equation intermediate expresion of gn} 
\widehat{G}_1(\RR^{n-1}(c_0(\alpha)))+m\left(\eps H_1(\omega_0, \alpha,\eps)\right) 
\\
& = & \rule{0ex}{2ex}
\label{equation final expresion of gn} 
\widehat{G}_1(\RR^{n-1}(c_0(\alpha)))+\eps \left( m\circ H_1(\omega_0,\alpha,\eps)\right). 
\end{eqnarray}
Where $\widehat{G}_{1}$ is defined like in 
(\ref{equation definition widehatG1}) in a suitable 
neighborhood of $\RR^{n}(c(\alpha))$. 

Our aim is to apply the IFT to $g_n$ at the point $(\alpha,\eps)
=(\alpha_n,\eps)$. Note that $c(\alpha_n,0)$ accumulates to $W^s(\Phi,\RR)$ 
when $n$ grows. Then the sequence $\{f_k^{(n)}\}_{0\leq k<n} $ attains to 
$W^s(\Phi,\RR)\cup W^u(\Phi,\RR)$ when $n$ grows. Therefore 
there exists $n_0$ such that, for any $n\geq n_0$, $f_k^{(n)}\in U$ 
for $k=0,\dots,n-1$, where $U$ is the set given by conjecture {\bf \ref{conjecture H2}}. 
Then the function $g_n(\omega,\alpha,\eps)$ is differentiable in a 
neighborhood of $(\alpha_n,0)$ if  $H_1(\omega_0, \alpha_n,0))$ has a 
unique non-degenerate minimum. Note that
\begin{equation}
\label{equation function H1} 
H_1(\omega_0, \alpha_n,0) = DG_1 (\omega_{n-1}, f_{n-1})  
D\TT_{\omega_{n-2}}(f_{n-2}) \cdots D\TT_{\omega_0}(f_0) \partial_\eps c(\alpha_n,0),
\end{equation}
which actually corresponds to the hypothesis {\bf H2}, which is satisfied.

Recall that if $c(\alpha_n,0)\in \Sigma_n$ then 
we have $g_n(\omega,\alpha_n,0)=0$. Therefore, 
to apply the Implicit Function Theorem (IFT for short) to $g_n$,  we need to 
check that $\partial_\alpha g_n(\omega,\alpha_n,0) \neq 0$. 
Note also that $\widehat{G}_1$ is the function which gives locally the 
manifold $\Sigma_1$, therefore 
the condition  $\partial_\alpha g_n(\omega,\alpha_n,0) \neq 0$ is 
equivalent to require that ${\RR^{n-1}(c_0(\alpha))}_{\alpha\in (a_n,b_n)}$ 
intersects transversally the manifold $\Sigma_1$ at
the point $\RR^{n-1}(c_0(\alpha_n))$. 
This condition is satisfied due the hypothesis {\bf H1} which 
requires the family 
$\{c(\alpha)\}_{\alpha\in[a,b]}$  to cross transversely each manifold 
$\Sigma_n$ for any $n$. 

Then the IFT ensures us that there exists a neighborhood 
$\tilde{A}= (\tilde{a}_n,\tilde{b}_n)\times[0,\tilde{c_n})$ and a function $\alpha_n(\omega) : [0,d_n)\rightarrow R)$ 
such that 
\[
\{c(\alpha,\eps)\}_{(\alpha,\eps) \in \tilde{A}} \cap \Upsilon_n(\omega) 
= \{c(\alpha_n(\omega,\eps),\eps)\}_{\eps\in[0,d_n)}. 
\]
Moreover we have that 
\[
\frac{d}{d \eps} \alpha ^{+}_n(0) = - 
\frac{\partial_{\eps} g_n(\omega,\alpha_n,0) }
{\partial_{\alpha} g_n(\omega,\alpha_n,0) }. 
\]

Finally we can use equation (\ref{equation final expresion of gn}) and 
(\ref{equation function H1}) to compute $\partial_{\eps} 
g_n(\omega,\alpha_n,0)$ and $\partial_{\alpha} g_n(\omega,\alpha_n,0)$. Then,
it follows
\[
\frac{d}{d \eps} \alpha ^{+}_n(0) = 
- \frac{ \displaystyle
 m\left(
D G_1 \left(\omega_{n-1}, f^{(n)}_{n-1}\right) 
D\TT_{\omega_{n-2}}\left(f^{(n)}_{n-2}\right) \cdots 
D\TT_{\omega_0}\left(f^{(n)}_0 \right) \partial_\eps c(\alpha_n,0) 
\right)}
{\rule{0ex}{3.5ex}  
D \widehat{G}_1 \left(f^{(n)}_{n-1}\right) 
D\RR \left(f^{(n)}_{n-2}\right) \cdots D\RR \left(f^{(n)}_0\right) 
\partial_\alpha c(\alpha_n,0)}, 
\]
which indeed is  equivalent to (\ref{equation alpha n +}).
\end{proof}


\begin{rem}
In theorem \ref{thm existence of non-reducibility direction} 
the parameter space has been set up in such a way 
that $\eps$ is greater or equal zero for any values of 
the parameter. In the proof above the IFT 
is applied to 
a neighborhood of  $(\alpha,\eps) = (\alpha_n,0)$. But the theorem is 
not completely applicable in its usual form (see for example \cite{Die69}, 
because we do not have the 
derivative defined w.r.t. directions with $\eps$ negative. To bypass 
this difficulty, we can consider the map $g_n(\omega,\alpha,\eps)$ written as 
in (\ref{equation final expresion of gn}). Then we can 
extend the map symmetrically as
\[g_n(\omega,\alpha,\eps) =  \widehat{G}_1(\RR^{n-1}(c_0(\alpha)))+\eps \left( m\circ H_1(\omega_0,\alpha,- \eps)\right). 
\]
for any $\eps<0$. This extension of the map is enough to have 
a $C^1$ map and to apply the IFT. If more differentiability 
is required, then other extensions should be considered.
\end{rem}

\begin{figure}[t]
\centering
\input{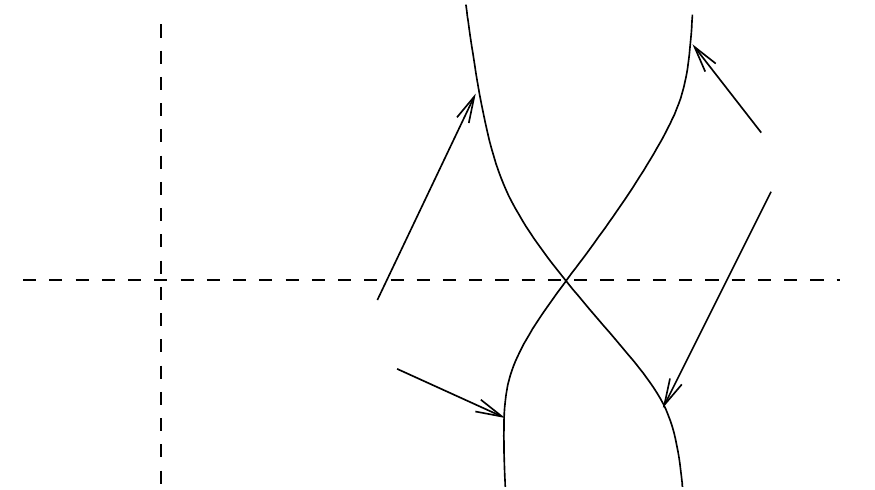_t}
\caption{Representation of the intersection of the sets
$\Upsilon_n^+ (\omega)$ and $\Upsilon_n^- (\omega)$ 
with a two parametric family of maps. The plane 
represented correspond to the plane of parameters $(\alpha,\eps)$ 
of the family.}
\label{Esquema Varietats Upsilon}
\end{figure}

\begin{rem} Recall that the domain of the parameters has been considered of the 
form $A= [a,b]\times[0,c]$. One might replace this domain for one of 
the form $\tilde{A}= [a,b]\times[-c,c]$. Then one can redo the 
proof of theorem \ref{thm existence of non-reducibility direction} with the 
new set up. But then, in the step from (\ref{equation intermediate expresion of gn}) 
to (\ref{equation final expresion of gn}) one can not apply the IFT, because
the function $g_n(\omega, \alpha,\eps)$ is not differentiable. This is because the 
minimum is replaced by a maximum for $\eps<0$ which makes the function only $C^0$. 
What one can do is to split $\tilde{A}= [a,b]\times [-c,c]$ into 
$\tilde{A}_-=[a,b]\times[-c,0]$ and $\tilde{A}_+=[a,b]\times[0,c]$. Then 
one can apply theorem \ref{thm existence of non-reducibility direction} 
twice, and then we obtain four different bifurcation curves emerging form 
the same point $(\alpha_n,0)$. What actually happens is that 
we have two smooth curves, crossing at $(\alpha_n,0)$, but the curves defined
by $\Upsilon^+_n(\omega) \cap \{c(\alpha,\eps)\}_{(\alpha,\eps) \in \tilde{A}}$ and 
$\Upsilon^-_n(\omega) \cap \{c(\alpha,\eps)\}_{(\alpha,\eps) \in \tilde{A}}$
swap their position when one crosses $\eps=0$. This is illustrated 
in figure \ref{Esquema Varietats Upsilon}.

\end{rem}

\begin{proof}[Proof of proposition \ref{propostion H2' implies H2}]
Applying proposition \ref{proposition invariance of Fourier node spaces}
on section \ref{section definition and basic properties}, we have
that the space $\BB_1$ is invariant by $D\TT_\omega(f)$ for any $\omega \in \T$ and
$f \in \BB_0$ in a neighborhood of the fixed point. Consequently, we have that
$v_{n-1}= D\TT_{\omega_{n-2}}\left(f^{(n)}_{n-2}\right) \cdots 
D\TT_{\omega_0}\left(f^{(n)}_0 \right) \partial_\eps c(\alpha_n,0)$
belongs to $\BB_1$, where $\omega_k=2^k \omega_0$ and
$f_k=\RR^k(c(\alpha,0))$. Finally, note that when we consider
$DG_1(\omega_{n-1}, f_{n-1}) v_{n-1}$ we
obtain a periodic function of the form $A \cos(2\pi\theta) + 
B \sin(2\pi\theta)$, which will have a unique non-degenerate
minimum (and maximum) if $A$ and $B$ are not simultaneously zero.
\end{proof}

\begin{proof}[Proof of proposition \ref{proposition derivative x(f)}] 
We have that the map $x:V\rightarrow \RHH(\B_\rho, \W)$ is obtained 
applying the IFT to the map
\[
\begin{array}{rccc}
F_1:& V \times \RHH(\B_\rho,\C) &\rightarrow & \RHH(\B_\rho,\C)  \\
\rule{0pt}{3ex} & (g,x )  & \mapsto & [F(g,x)](\theta):= 
g(\theta + \omega, g(\theta, x(\theta))) - x(\theta + 2 \omega). 
\end{array}
\]
at the point $(f_1, x(f_1))$. From the IFT we also know that 
\begin{equation}
\label{equation derivative x via IFT} 
D_h x[f] h = - (D_x F_1(f,x[f]))^{-1} \circ (D_f F_1(f,x[f])) h.
\end{equation}

Differentiating $F_1$ we have 
\[
\begin{array}{rccc}
D_x F_1(g,x[g]):& \RHH(\B_\rho,\C) & \rightarrow & \RHH(\B_\rho,\C) \\
\rule{0pt}{3ex} & l   & \mapsto & 
D_x g\big(\theta +\omega, g(\theta, x[g](\theta))\big)  
D_x g\big(\theta, x[g](\theta)\big) l(\theta) - l(\theta + 2 \omega). 
\end{array}
\]
and 
\[
\begin{array}{rccc}
D_f F_1(g,x[g]):& \RHH(\B_\rho\times \W,\C)  &\rightarrow & \RHH(\B_\rho,\C)  \\
\rule{0pt}{4.5ex} & h  & \mapsto & 
\begin{array}{c} 
D_x g\big(\theta + \omega, g(\theta, x[g](\theta))\big) h (\theta, x[g](\theta)) + \\
\rule{0pt}{2.5ex}h\big(\theta +\omega, g(\theta, x[g](\theta))\big). \end{array}
\end{array}
\]

From the fact that $f_1\in \BB_0$ it follows that its critical point is
$x=0$ and $f_1(0)=1$. From $f\in \Sigma_1$ we have that the critical point is 
a two periodic orbit, therefore $x(f_1) = 0$ and $f(f(0))=f(1)=0$.  Using these 
properties, given $h\in \RHH(\B_\rho\times \W,\C)$ we have 
\[
\Big[D_f F(f_1, x[f_1]) h \Big](\theta) = D_x f_1(1) h(\theta,0) + h(\theta+\omega,1).
\]
On the other hand, given $\ell \in  \RHH(\B_\rho\times \W,\C) $ we have that 
\[
\Big[D_x F(f_1, x[f_1]) \ell \Big](\theta) = - \ell(\theta +2\omega). 
\]
Note that this operator is easily invertible, and its inverse is given by 
\[ \Big[ (D_x F_1(f,x[f]))^{-1} \ell  \Big](\theta) = -\ell(\theta -2\omega). \]

Now we can use (\ref{equation derivative x via IFT}) and the last two equations 
to conclude that 
\[
\Big[D_h x(f_1) h\Big](\theta) = D_x f_1(1) h(\theta -2 \omega,0) + h(\theta-\omega,1). 
\]
\end{proof}

\begin{proof}[Proof of proposition \ref{proposition derivative G1}]
Using the chain rule on map (\ref{equation definition G1}) is it 
not hard to see that 
\[
\begin{array}{rcl} 
\Big[ D_g G_1(g) h \Big] (\theta) & = & 
D_x g(\theta + \omega, g(\theta,x(\theta))) \Big( 
D^2_{x^2} g(\theta, x[g](\theta))\big[ D_h x(g) h\big](\theta)  
+ D_x h (\theta, x[g](\theta)) 
\Big) \\
 & & + D_x g(\theta, x[g](\theta) ) H(g,x[g],h) (\theta), 
\end{array}
\]
where $H(g,x[g],h)$ is an expression on $g$, $x[g]$ and $h$ that 
can be explicitly computed but here it has no interest since 
it will be cancelled out further on. 

From the fact that $f_1\in \BB_0$, it follows that its critical point is at
$x=0$ (i.e. $D_x f_1 (0) =0$) and  $f_1(0)=1$. 
From $f_1\in \Sigma_1$ we have that the critical point is
a two periodic orbit, therefore $x(f_1) = 0$ and $f_1(f_1(0))=f_1(1)=0$. Using this
 properties and the equation above we have 
\[
\Big[ D_g G_1(f_1) h \Big] (\theta) = 
D_x f_1 (1) \Big( 
D^2_{x^2} f_1(0) \big[ D_f x(f_1) h\big](\theta)   + D_x h(\theta, 0)
\Big). 
\] 
Finally, we can use proposition \ref{proposition derivative x(f)} to 
compute $\big[ D_f x(f_1) h\big](\theta)$, then the stated result holds.
\end{proof}

\begin{proof}[Proof of corollary \ref{corollary non-reducibility directions}] 
From theorem 
\ref{thm existence of non-reducibility direction} it follows that 
\begin{equation}
\label{equation alpha n + proof of corollary}
\frac{d}{d \eps} \alpha ^{+}_n(0) =
- \frac{ \displaystyle  m\left( 
D G_1 \left(\omega_{n-1}, f^{(n)}_{n-1}\right) v_{n-1}^{(n)} \right)}
{  D \widehat{G}_1 \left(f^{(n)}_{n-1}\right) u_{n-1}^{(n)}}. 
\end{equation} 

We can apply now proposition \ref{proposition derivative G1} to compute 
the derivative of $G_1$. Note that  $\widehat{G}_1$ can be seen as 
$G_1$ restricted to $\BB_0$, then the proposition \ref{proposition derivative G1} 
is also applicable to $\widehat{G}_1$. Then we have 
\begin{align*}
D G_1 \left(\omega_{n-1}, f^{(n)}_{n-1}\right) v_{n-1}^{(n)} = & 
 D_x  f_{n-1}^{(n)}(1)
\Big[ D_{x^2}  f_{n-1}^{(n)}(0) 
\Big( D_x  f_{n-1}^{(n)}(1)  v_{n-1}^{(n)}(\theta-2\omega_{n-1}, 0)  \\ 
& 
+ v_{n-1}^{(n)} (\theta -\omega_{n-1},1) 
\Big)   +  
D_x v_{n-1}^{(n)} (\theta,0), 
\Big]
\end{align*} 
and
\[
D \widehat{G}_1 \left(f^{(n)}_{n-1}\right) u_{n-1}^{(n)} = 
 D_x  f_{n-1}^{(n)}(1) 
\Big[D_{x^2}  f_{n-1}^{(n)}(0) \Big( D_x  f_{n-1}^{(n)}(1) 
u_{n-1}^{(n)}(0) + u_{n-1}^{(n)} (1) \Big)  +  
D_x u_{n-1}^{(n)} (0) \Big].
\]

Finally let us remark that $f_{n-1}^{(n)}$ belongs to $\BB_0$, therefore we have 
that $D_x f_{n-1}^{(n)}(x)x <0$ for any $x\in I\setminus\{0\}$. Concretely we have 
$D_x f_{n-1}^{(n)}(1)<0$. If we replace the values of 
 $ D G_1 \left(\omega_{n-1}, f^{(n)}_{n-1}\right) v_{n-1}^{(n)}$ 
and $ D \widehat{G}_1 \left(f^{(n)}_{n-1}\right) u_{n-1}^{(n)} $ in 
(\ref{equation alpha n + proof of corollary}), then when we
simplify the value $D_x f_{n-1}^{(n)}(1)<0$ the minimum becomes a maximum. 
\end{proof}


\begin{appendix}

\section{The minimum function}
\label{appendix minimum function}

In this appendix we give some basic properties of the minimum of a function 
$f:\T \rightarrow \R$ as an operator, with $\T= \R/\Z $ the one dimensional real torus. 
To work in the same topology that in the rest of the paper 
we will consider $f\in \RHH(\B_\rho,\C)$ the space of real analytic  
functions from $\B_\rho$ to $\C$, and continuous on the 
closure of $\B_\rho$, with $\B_\rho$  a band of width $\rho$ around
the real torus $\T$. 
In other words we want to study the operator
\[
\begin{array}{rccc}
m:&  \RHH(\B_\rho,\C) &\rightarrow & \R \\
\rule{0pt}{3ex} & g & \mapsto & \displaystyle \min_{\theta \in \T} g(\theta). 
\end{array}
\]

More concretely we focus on the differentiability of the 
map $m$. Note that in the space of holomorphic functions, it has 
no sense to consider the minimum of a function. Nevertheless, 
for maps in $\RHH(\B_\rho,\C)$ 
we have that the image of real numbers are real numbers, 
then we can consider the minimum in the real torus. 

Concerning to differentiability of the minimum as an operator, one has 
the following result.

\begin{prop}
\label{proposition diferentiability of minimum}
Let $g_0\in \RHH(\B_\rho,\C)$ be a function such that its global minimum 
in $\T$ is attained only at one value $\theta_0\in \T$ and it is not degenerate, i.e. 
$g''(\theta_0)>0$. Then there is a function $G:  \RHH(\B_\rho,\C) \rightarrow \C$  such 
that $G(g)=m(g)$ for any $g\in  \RHH(\B_\rho,\C)$ and $G$ is infinitely many times 
differentiable in a small neighborhood of $g_0$.
In other words we have that $m$ can be extended to a differentiable 
function $G:\RHH(\B_\rho,\C) \rightarrow \C$ around $g_0$. 

Moreover, the derivative of $G$ in $g_0$ is given by
\[
DG(g_0) \thinspace g_1 = g_1(\theta_0). 
\]
\end{prop}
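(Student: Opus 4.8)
The plan is to reduce the problem to a finite-dimensional implicit function theorem argument applied to the derivative $g'$ near its zero $\theta_0$. First I would observe that, since $g_0 \in \RHH(\B_\rho,\C)$, its restriction to the real torus $\T$ is real analytic, so $g_0'$ is real analytic on $\T$ and vanishes at $\theta_0$ with $g_0''(\theta_0) > 0$, i.e. $\theta_0$ is a simple (non-degenerate) zero of $g_0'$. I would then set up the map $\Theta: \RHH(\B_\rho,\C) \times \C \to \C$ given by $\Theta(g,\theta) := g'(\theta)$ (the derivative taken with respect to the periodic variable, which makes sense for analytic functions on $\B_\rho$ by Cauchy estimates exactly as in the proof of Theorem \ref{theorem renormalization analitic set up}; differentiation is a bounded linear operator on the appropriate sub-Banach-space of functions holomorphic on a slightly larger band). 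This map is analytic in $g$ (it is linear and bounded) and analytic in $\theta$, with $\Theta(g_0,\theta_0) = 0$ and $\partial_\theta \Theta(g_0,\theta_0) = g_0''(\theta_0) \neq 0$. The analytic implicit function theorem on Banach spaces then yields a neighborhood $U$ of $g_0$ and an analytic (hence $C^\infty$) map $g \mapsto \theta(g)$, with $\theta(g_0) = \theta_0$, such that $g'(\theta(g)) = 0$ for $g \in U$; moreover $\theta(g)$ stays in a complex neighborhood of $\theta_0$ and, for $g$ real analytic, remains real because $\theta_0$ is a real simple zero and the IFT solution is unique.

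Next I would define the candidate extension $G(g) := g(\theta(g))$ for $g \in U$. This is a composition of analytic maps (evaluation and $\theta(\cdot)$), hence $C^\infty$ in a neighborhood of $g_0$. The remaining point is to verify that $G(g) = m(g)$ for $g$ real analytic and close enough to $g_0$, i.e. that $\theta(g)$ is still the location of the \emph{global} minimum of $g$ on $\T$. For this I would argue by a standard perturbation/compactness argument: since $\theta_0$ is the unique global minimizer of $g_0$ and $g_0'' (\theta_0) > 0$, on a small closed neighborhood $V_0$ of $\theta_0$ the point $\theta(g)$ is the unique critical point of $g$ in $V_0$ and is a local minimum (the second derivative $g''(\theta(g))$ stays positive by continuity), while on the complement $\T \setminus V_0$ the values of $g$ stay uniformly above $g(\theta_0) + c$ for some $c > 0$ (here I use that $g_0 > m(g_0)$ on the compact set $\T \setminus V_0$, together with $\|g - g_0\|$ small). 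Hence the global minimum of $g$ over $\T$ is attained in $V_0$, and therefore at $\theta(g)$, giving $m(g) = g(\theta(g)) = G(g)$.

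Finally, for the derivative formula I would differentiate $G(g) = g(\theta(g))$ at $g_0$ in a direction $g_1 \in \RHH(\B_\rho,\C)$, obtaining by the chain rule
\[
DG(g_0)\, g_1 = g_1(\theta_0) + g_0'(\theta_0)\,\big[D\theta(g_0)\, g_1\big].
\]
Since $g_0'(\theta_0) = 0$, the second term vanishes and $DG(g_0)\, g_1 = g_1(\theta_0)$, as claimed.

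I expect the main obstacle to be the technically careful handling of the "loss of derivative" issue when invoking the implicit function theorem: the map $g \mapsto g'$ is only bounded as a map between function spaces on nested bands $\B_{\rho'} \hookleftarrow \B_\rho$, so one must choose the Banach space of analytic functions on a slightly wider band (exactly the device used earlier in the paper for $\TT_\omega$) and track that the IFT solution $\theta(g)$ and the resulting $G$ are genuinely defined and smooth on a neighborhood in the original space $\RHH(\B_\rho,\C)$. The global-minimum verification is routine compactness but needs the non-degeneracy and uniqueness hypotheses used in full; everything else is bookkeeping.
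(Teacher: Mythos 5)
Your proposal is correct and follows essentially the same route as the paper's proof: apply the implicit function theorem to the map $(g,\theta)\mapsto g'(\theta)$ at $(g_0,\theta_0)$ using $g_0''(\theta_0)>0$, define $G(g)=g(\theta(g))$ via the evaluation map (with the narrower-band device to handle the derivative), and obtain $DG(g_0)g_1=g_1(\theta_0)$ from the chain rule since $g_0'(\theta_0)=0$. Your compactness argument for why $\theta(g)$ remains the \emph{global} minimizer is in fact spelled out more carefully than in the paper, which simply shrinks the neighborhood $U$ and asserts it.
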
 

\begin{proof}
To prove the proposition we construct the map $G$ using 
the IFT. 

Consider the auxiliary function 
\[
\begin{array}{rccc}
F:& \RHH(\B_\rho,\C)\times \T &\rightarrow & \C \\
\rule{0pt}{3ex} & (g,\theta)  & \mapsto & g'(\theta).
\end{array}
\]
We have that the function $F$ corresponds to the derivative of $g$ composed 
with the evaluation in the point $\theta$. Both the derivative and the evaluation map 
are $C^\infty$ functions (they are bounded 
linear operators, therefore they are infinitely 
differentiable), 
then  its composition, which is $F$,  is also $C^\infty$. 

Consider $g_0$ as in the hypothesis of the proposition. Then we have that 
there exist a $\theta_0\in \T$ such that $(g_0,\theta_0)$ is a zero of 
$F$. By hypothesis we have that $\theta_0$  is not a degenerate minimum, then 
\[
D_\theta F(g_0,\theta_0) = g_0''(\theta_0) >0,
\]
We can apply now the IFT, consequently we have that there exist an open neighborhood 
$U\subset  \RHH(\B_\rho,\C)$ of $g_0$  and a $C^\infty$ function 
\begin{equation}
\label{function theta g}
\begin{array}{rccc}
\theta:& U &\rightarrow & \T \\
\rule{0pt}{3ex} & g & \mapsto & \theta(g),
\end{array}
\end{equation}
such that $F(g,\theta(g))=0$ for any $g\in U$. Concretely  we have 
that $\theta(g)$ is a local minimum for any $g\in U$. Recall that by 
hypothesis we have also that $\theta_0$ is a global minimum of $g_0$ and 
that it is unique. Then, reducing $U$ to a smaller neighborhood if necessary, we 
have that $\theta(g)$ is also the unique global minimum of $g$, for 
any $g\in U$. 

Let us consider the evaluation map $\operatorname{Ev}:  \RHH(\B_{\rho'},\C) \times 
\T \rightarrow \C$ the evaluation map (in a point of the real torus 
$\T\subset \B_{\rho'}$) with $\rho'$ a value 
$0<\rho'<\rho$. Let $\theta$ be the function 
(\ref{function theta g}) defined by  the IFT. For any $g\in U$ we 
can define the map $G$ in the statement of the proposition 
\ref{proposition diferentiability of minimum}
as $G(g) = \operatorname{Ev}(g',\theta(g))$. With this definition we have that $G(g)= m(g)$ for 
any $g\in U$. Therefore the minimum 
is a  $C^\infty$ function in a neighborhood of $g_0$. Note that 
in the definition of the evaluation map $\operatorname{Ev}$ we have considered
$\RHH(\B_{\rho'},\C)$ as its domain with $0<\rho'<\rho$. This is needed in 
order to ensure that $g'$ is a bounded function, then one has $G$ is 
a bounded operator. 

To finish we compute the derivative of the minimum function. Is not difficult to 
see that $D_\theta Ev(f,\theta) = f'(\theta)$ and $ D_g Ev(f,\theta) g_1 = g_1(\theta)$. 
Using the chain rule we have 
\[
DG(g) g_1 = D_g Ev(g,\theta(g)) g_1 + D_\theta Ev(g,\theta(g)) D_g \theta(g) g_1.
\]
Recall that $g_0'(\theta(g_0))= g_0'(\theta_0)=0$, then we have
$D_\theta Ev(g,\theta(g)) = g'(\theta(g)) =0 $. Then it follows 
\[
DG(g_0)\thinspace g_1 = g_1(\theta_0).
\]
\end{proof}

Let us discuss the case when the hypotheses of the proposition 
\ref{proposition diferentiability of minimum} are not satisfied. 
The two main hypotheses of the proposition are the 
non-degeneracy of the minimum and the uniqueness of it. When the non-degeneracy 
condition is suppressed the proposition still being true, since the 
argument done before can be adapted changing the auxiliary function, although the 
proof becomes quite more technical. 

On the other hand, the uniqueness condition of the 
minimum is always necessary, because when 
it is not satisfied the map ceases to be Frechet differentiable.

We just want to see that it is not differentiable. For simplicity 
we will take de derivative in the reals. Let us consider the 
most degenerate case, which is when the function $g_0$ is constant. 
Then given  $g_1\in \RHH(\B_\rho,\C)$
we want to compute $Dm(g_0, g_1)$, the Gateaux derivative of $m$ 
at $g_0$ with respect to 
the $g_1$ direction. By definition we have
\[
Dm(g_0, g_1 )= \lim_{t\rightarrow 0} \frac{m(g_0+t g_1) - m(g_0)}{t}
\] whenever the limit exist. 

Indeed, when the function $g_0(\theta)$ is constant we have that 
\[\min_{\theta\in T} (g_0 + t g_1) = g_0 +  \min_{\theta\in T} t g_1 =
\left\{ 
\begin{array}{rl}
\displaystyle g_0 + t\min_{\theta\in T} g_1  & \text{ if } t\geq 0 \\
\rule{0pt}{4ex} \displaystyle 
g_0 + t\max_{\theta\in T} g_1  & \text{ if } t\leq 0 
\end{array}
\right.\] 

Then the Gateaux derivative not only depends on the sign of $t$ when taking the 
limit, moreover when the sign is fixed the limit which we obtain is not even
a linear operator. It is clear that in this case the minimum 
operator is not differentiable.

\end{appendix}


\bibliographystyle{plain}

\end{document}